
\documentclass{compositio}
\usepackage{mathrsfs}
\usepackage{amsmath}
\usepackage{amssymb}
\usepackage{verbatim}
\usepackage{xcolor}
\usepackage{graphicx}
\usepackage{textcomp}
\usepackage[all]{xy}
\numberwithin{equation}{section}

\theoremstyle{plain}
\newtheorem{theorem}{Theorem}[section]
\newtheorem{proposition}{Proposition}[section]
\newtheorem{corollary}{Corollary}[section]
\newtheorem{lemma}{Lemma}[section]

\theoremstyle{definition}
\newtheorem{definition}{Definition}[section]

\theoremstyle{remark}
\newtheorem{rem}{Remark}[section]


\begin{document}

\title{On the birational invariance of the balanced hyperbolic manifolds}

\author{Jixiang Fu}
\email{majxfu@fudan.edu.cn}
\address{School of Mathematical Sciences, Fudan University, Shanghai 200433, People's Republic of China}

\author{Hongjie Wang}
\email{21110180020@m.fudan.edu.cn}
\address{School of Mathematical Sciences, Fudan University, Shanghai 200433, People's Republic of China}

\author{Jingcao Wu}
\email{wujincao@shufe.edu.cn}
\address{School of Mathematics, Shanghai University of Finance and Economics, Shanghai 200433, People's Republic of China}

\classification{32Q15, 32J25, 32J27}
\keywords{Balanced metric, Hyperbolicity, Birational invariant}
\thanks{Fu is supported by NFSC, grant 12141104; Wu is supported by NFSC, grant 12201381.}

\begin{abstract}
In this paper, we discuss the birational invariance of the class of balanced hyperbolic manifolds. 
\end{abstract}

\maketitle

\section{Introduction}
\label{sec:introduction}
In his celebrated paper \cite{Gro91}, M. Gromov introduces an important notion called the K\"{a}hler hyperbolicity. It is pinched between the real hyperbolicity and the Kobayashi hyperbolicity \cite{Kob98}, and helps to settle the K\"{a}hler case of the Chern conjecture \cite{Gro91}. After that, it leads to fruitful applications and improvements such as \cite{CX01,CY18,Eys97,Hit00,Kol95,McM00} and so on.

However, since the class of K\"{a}hler manifolds in general is not invariant under the birational transform, it would be desirable to have a birational variant of the K\"{a}hler hyperbolicity developed. It is an open problem posed by J. Koll\'{a}r in \cite{Kol95}. Koll\'{a}r suggests to require Gromov's condition for a degenerate K\"{a}hler form, and \cite{BCDT24,BDET24} introduce the weakly K\"{a}hler hyperbolicity by asking the cohomology class to be nef and big rather than K\"{a}hler. Weakly K\"{a}hler hyperbolic manifolds possess many key features as K\"{a}hler hyperbolic manifolds, and are invariant under the birational transform. Whereas in this paper, we are trying to investigate a more general situation, namely the balanced hyperbolicity.

More precisely, let $X$ be a compact complex manifold of dimension $n$. A Hermitian metric $\omega$ on $X$ is called balanced if $d\omega^{n-1}=0$. $X$ is called a balanced manifold if it possesses a balanced metric. Obviously a K\"{a}hler metric must be balanced, but there do exist non-K\"{a}hler balanced metrics. Hence a balanced form is regarded as a generalization of  K\"{a}hler form. A celebrated theorem in \cite{AB95} asserts that the class of compact balanced manifolds is invariant under the smooth modification, which directly inspires this paper.

Let $\pi:\tilde{X}\rightarrow X$ be the universal cover, and fix a Riemannian metric $g$ on $X$. Recall that a $k$-form $\alpha$ on $X$ is called $\tilde{d}$-bounded, if there exists a $(k-1)$-form $\beta$ on $\tilde{X}$ such that $\pi^{\ast}\alpha=d\beta$ and $\sup_{\tilde{X}}\|\beta\|_{\pi^{\ast}g}<\infty$. Note since $X$ is compact, this notion is actually independent of the choice of $g$. Moreover, an easy argument (c.f. Lemma \ref{l23}) shows that we can even talk about the $\tilde{d}$-boundedness for a de Rham cohomology class, in which case we also call it hyperbolic. 

Then we say that a Hermitian metric $\omega$ on $X$ is {\it balanced hyperbolic} if $\omega$ is balanced and $\omega^{n-1}$ is $\tilde{d}$-bounded. It is notable that the balanced hyperbolicity was first introduced in \cite{MP22,MP23}, and we will continue their discussion in a wider range. Observe that if $\omega$ is balanced, $[\omega^{n-1}]$ must be nef and big as an $(n-1,n-1)$-class which is defined in Section 2. It allows us to talk about more degenerate cases. We say that a real smooth $(1,1)$-form $\omega$ is {\it semi-balanced hyperbolic} if $\omega^{n-1}$ is $d$-closed, non-negative, strictly positive on a Zariski open set, and $\tilde{d}$-bounded. Fix a positive integer $k\mid (n-1)$, say $kt=n-1$. We say that a real smooth $(k,k)$-form $\beta$ is {\it weakly balanced $k$-hyperbolic} if $\beta$ is $\tilde{d}$-bounded and $[\beta^t]$ is nef and big. Finally, we say that a balanced manifold $X$ is a (weakly or semi-)balanced $k$-hyperbolic manifold, if there exists a (weakly or semi-)balanced hyperbolic form on it. 

In a similar atmosphere, we can also talk about the $k$-hyperbolicity for the K\"{a}hler case. We say that a Hermitian metric $\omega$ on $X$ is {\it K\"{a}hler $k$-hyperbolic} if $\omega$ is K\"{a}hler and $\omega^{k}$ is $\tilde{d}$-bounded. We say that a real smooth $(1,1)$-form $\omega$ is {\it semi-K\"{a}hler $k$-hyperbolic} if $\omega$ is $d$-closed, non-negative, strictly positive on a Zariski open set and $\omega^{k}$ is $\tilde{d}$-bounded. We say that a real smooth $(1,1)$-form $\omega$ is {\it weakly K\"{a}hler $k$-hyperbolic} if $\omega$ is $d$-closed, $[\omega]$ is nef and big, and $\omega^{k}$ is $\tilde{d}$-bounded. Finally, we say a K\"ahler manifold $X$ is a (weakly or semi-) K\"{a}hler $k$-hyperbolic manifold, if there exists a (weakly or semi-) K\"{a}hler $k$-hyperbolic form on it. In Sect. \ref{sec:hyper} we will discuss the relationship among these hyperbolicities.

Now we can state our results concerning the behavior of the balanced hyperbolicity under the birational transform.  Recall a birational transform $f:\hat{X}\dashrightarrow X$ between compact complex manifolds is a meromorphic map such that, for suitable analytic subvarieties $Y$ (called the center) of $X$ and $E$ (called the exceptional set) of $\hat{X}$, $f|_{\hat{X}\setminus E}:\hat{X}\setminus E\rightarrow X\setminus Y$ is a biholomorphic map. Without loss of generality, we usually ask $\textrm{codim} E\geqslant1$, $\textrm{codim}Y\geqslant2$ and $\dim Y\leqslant\dim E$. In particular, when $\textrm{codim}E=1$, we call it a divisorial contraction; when $\textrm{codim}E\geqslant2$, we call it a small contraction; when $f$ is moreover a holomorphic map, we call it a smooth modification. In the last situation, a standard argument shows that we must have $\textrm{codim}E=1$ if $f$ is not a biholomorphic map itself.

For a given birational transform $f:\hat{X}\dashrightarrow X$, let
\[
\Gamma_{f}:=\{(x,f(x))\in \hat{X}\times X\mid x\in\hat{X}\setminus E\},
\]
and let $\mu:W\rightarrow\bar{\Gamma}_{f}$ be the desingularization of the closure $\bar{\Gamma}_{f}$. Let $q_{1}:\bar{\Gamma}_{f}\rightarrow\hat{X}$ and $q_{2}:\bar{\Gamma}_{f}\rightarrow X$ be the natural projections. Then $p_{1}:=q_{1}\circ\mu:W\rightarrow\hat{X}$ and $p_{2}:=q_{2}\circ\mu:W\rightarrow X$ are modifications. Denote by $V$ the exceptional divisor of $\mu$. Clearly we have $p_{1}(V)\subseteq E$ and $p_{2}(V)\subseteq Y$.

\begin{theorem}\label{t11}
Let $f:\hat{X}\dashrightarrow X$ be a birational transform between compact complex manifolds of dimension $n$.
\begin{enumerate}
\item[(1)] Assume that $f$ is a divisorial contraction, and the higher homotopy groups $\pi_{i}(W)=0$ for $2\leqslant i\leqslant 2n-3$. If $\hat{X}$ is balanced hyperbolic, then $X$ is also balanced hyperbolic.
\item[(2)] Assume that $f$ is a small contraction, and the higher homotopy groups $\pi_{i}(W)=0$ for $2\leqslant i\leqslant 2n-3$. Then $\hat{X}$ is balanced hyperbolic if and only if $X$ is balanced hyperbolic.
\end{enumerate}
\end{theorem}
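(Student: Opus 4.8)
\emph{Strategy.} The plan is to route everything through the smooth model $W$ of the graph and to separate the two ingredients of balanced hyperbolicity. Balancedness itself is a bimeromorphic invariant: since $p_1$ and $p_2$ are modifications, the theorem of Alessandrini--Bassanelli \cite{AB95} shows that $\hat X$, $W$ and $X$ are balanced as soon as one of them is, so in both (1) and (2) all three manifolds are balanced throughout the argument. The genuinely new point is therefore the transfer of the $\tilde d$-boundedness, and by Lemma \ref{l23} this can and will be handled at the level of the de Rham class $[\omega^{n-1}]$, an $(n-1,n-1)$-class that is automatically nef and big when $\omega$ is balanced.

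\emph{Transport to $W$.} First I would move the data from $\hat X$ to $W$. A proper modification of compact complex manifolds induces an isomorphism on $\pi_1$ (loops and nullhomotopies can be put in general position with respect to the exceptional locus, whose image has complex codimension $\ge 2$), so $p_1$ and $p_2$ lift to maps $\tilde p_1,\tilde p_2$ of universal covers; since $W$ is compact and the $p_i$ are holomorphic, $d\tilde p_i$ has bounded operator norm with respect to the pulled-back reference metrics. Hence pullback along $p_1$ carries $\tilde d$-bounded forms to $\tilde d$-bounded forms, and likewise for classes. Consequently, starting from a balanced hyperbolic metric $\hat\omega$ on $\hat X$, the smooth $d$-closed non-negative $(n-1,n-1)$-form $p_1^{\ast}\hat\omega^{\,n-1}$ is $\tilde d$-bounded on $W$, is strictly positive off the $p_1$-exceptional locus, and represents a nef and big class $\alpha\in H^{2n-2}(W;\mathbb R)$; in the language of this paper $W$ is weakly balanced $(n-1)$-hyperbolic. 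This is the common starting point of (1) and (2), and in (2) the same construction with $p_2$ in place of $p_1$ produces the symmetric starting point.

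\emph{Descent to $X$.} The core step is to produce on $X$ a genuine balanced metric whose $(n-1)$-st power is $\tilde d$-bounded. Pushing forward, $T:=(p_2)_{\ast}\big(p_1^{\ast}\hat\omega^{\,n-1}\big)$ is a $d$-closed positive $(n-1,n-1)$-current on $X$ that is smooth and strictly positive on the Zariski-open set $X\setminus Y$, and whose class $[T]=(p_2)_{\ast}\alpha$ lies in the closure of the balanced cone of $X$. Two things must be proved: (a) $[T]$ is again $\tilde d$-bounded; and (b) $[T]$ can be perturbed into the open balanced cone while remaining $\tilde d$-bounded, producing a class $[\omega_X^{\,n-1}]$ of an actual balanced metric $\omega_X$, whence $\omega_X^{\,n-1}$ is $\tilde d$-bounded by Lemma \ref{l23} and $X$ is balanced hyperbolic. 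For (a) one writes $[T]=(p_2)_{\ast}p_2^{\ast}[T]$ and transports a bounded primitive from $\tilde W$ down to $\tilde X$ through $\tilde p_2$; the difficulty is that fibre integration need not respect $L^{\infty}$-bounds, and this is exactly where the hypothesis $\pi_i(W)=0$ for $2\le i\le 2n-3$ enters — it forces the contracted fibres of $p_2$ (and of $p_1$) to be sufficiently connected that the bounded primitive on $\tilde W$ may be chosen basic along the contracted directions, so that its bound descends. The same vanishing identifies the $\tilde d$-bounded subspace of $H^{2n-2}$ on $W$ with that on $X$ in the relevant degree, which is what makes the perturbation in (b), by a small positive multiple of a balanced class, stay inside the $\tilde d$-bounded locus.

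\emph{The two cases, and the obstacle.} For a small contraction $f$ is a biholomorphism between the complements of the codimension-$\ge 2$ sets $E$ and $Y$, so $\hat X$ and $X$ are interchangeable and the argument above runs in either direction, giving the equivalence in (2). For a divisorial contraction the situation is not symmetric — $f^{-1}$ would contract a codimension-one set and hence is not a birational transform in the normalisation fixed above — which is why only the implication in (1) is asserted; the one extra bookkeeping point there is that the class of the $p_1$-exceptional divisor in $H^{2n-2}(W)$ is killed by $(p_2)_{\ast}$ for dimension reasons and so does not spoil the positivity of $[T]$ on $X$. I expect (a)--(b) to be the main obstacle: everything else is either classical (the $\pi_1$-invariance of modifications, \cite{AB95}) or a formal consequence of Lemma \ref{l23}, whereas the control of primitives along the exceptional fibres is precisely the content for which the higher-homotopy hypothesis on $W$ is imposed.
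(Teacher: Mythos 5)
Your decomposition into ``balancedness'' and ``descent of $\tilde d$-boundedness'' is the right one, and you correctly locate the difficulty in the descent step. But your proposed mechanism for that step is not a proof, and it is not how the hypothesis $\pi_i(W)=0$ for $2\leqslant i\leqslant 2n-3$ actually enters. You suggest pushing a bounded primitive forward along $\tilde p_2$ and claim the homotopy vanishing lets one choose that primitive ``basic along the contracted directions'' so that fibre integration preserves the $L^\infty$ bound. There is no argument here: fibre integration over exceptional fibres has no reason to respect sup-norms, and nothing in the vanishing of $\pi_i(W)$ produces a basic primitive. The actual mechanism is purely topological and runs through the classifying space $BG$ of $G=\pi_1(W)\simeq\pi_1(X)$: since $[\Xi]=(p_1)^*[\hat\Omega]$ is hyperbolic on $W$ it is aspherical, hence by Lemma~\ref{l31} it equals $c^*_{\tilde W,W}[\alpha]$ for some $[\alpha]\in H^{2n-2}(BG,\mathbb{R})$; Theorem~\ref{t31} (which is exactly where $\pi_i(W)=0$ for $2\leqslant i\leqslant 2n-3$ is used) upgrades $[\alpha]$ to a universally hyperbolic class on $BG$ (Corollary~\ref{c31}); and since $c_{\tilde W,W}=c_{\tilde X,X}\circ p_2$, the class $[\Omega]:=c^*_{\tilde X,X}[\alpha]$ is hyperbolic on $X$ and satisfies $(p_2)^*[\Omega]=[\Xi]$, hence $[\Omega]=(p_2)_*(p_1)^*[\hat\Omega]$. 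No primitive is ever pushed forward.

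Your step (b) is also off target. The pushforward class $(p_2)_*(p_1)^*[\hat\Omega]$ is not merely in the closure of the balanced cone: the paper shows it lies in the \emph{open} cone $\mathcal B(X)$ directly, via the duality of Proposition~\ref{p21} together with the Alessandrini--Bassanelli lifting of positive $\partial\bar\partial$-closed $(1,1)$-currents (Theorem~\ref{t32}) and the support lemma (Proposition~\ref{p31}), using $\operatorname{codim}Y\geqslant 2$. So no perturbation is needed; and as you set it up, perturbing by a small multiple of an arbitrary balanced class changes the cohomology class and would destroy $\tilde d$-boundedness unless the perturbing class were itself hyperbolic, which you do not arrange. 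In short: the skeleton of your argument is compatible with the paper's, and your remarks on the asymmetry of case (1) versus case (2) are correct, but the central step --- transferring hyperbolicity from $W$ to $X$ --- is missing its actual proof, which is the aspherical-class/classifying-space argument rather than any analytic control of primitives along fibres.
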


Next, we investigate the K\"{a}hler case.

\begin{theorem}\label{t12}
Let $f:\hat{X}\dashrightarrow X$ be a birational transform between compact K\"{a}hler manifolds of dimension $n$. Fix a positive integer $k\mid(n-1)$. Assume that the higher homotopy groups $\pi_{i}(W)=0$ for $2\leqslant i\leqslant 2k-1$. 

If $\hat{X}$ is weakly K\"{a}hler $k$-hyperbolic, then $X$ is weakly balanced $k$-hyperbolic. Conversely, if $X$ is weakly K\"{a}hler $k$-hyperbolic, then $\hat{X}$ is weakly balanced $k$-hyperbolic.
\end{theorem}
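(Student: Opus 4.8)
Both assertions come from one argument, since the data are symmetric in $\hat X$ and $X$: $p_1,p_2\colon W\to\hat X,X$ are both modifications and the hypothesis $\pi_i(W)=0$, $2\leqslant i\leqslant 2k-1$, only concerns $W$. So it suffices to prove that if $\hat X$ carries a weakly K\"{a}hler $k$-hyperbolic form $\omega$, then $X$ is weakly balanced $k$-hyperbolic; the plan is the classical ``pull back to $W$, then push down to $X$''. On $W$ set $\eta:=p_1^{\ast}\omega$. Then $d\eta=0$; $[\eta]=p_1^{\ast}[\omega]$ is nef (pull-back of nef) and big, since $(p_1^{\ast}[\omega])^{n}=[\omega]^{n}>0$ and $W$ is of Fujiki class $\mathcal C$; and $\eta^{k}=p_1^{\ast}(\omega^{k})$ is $\tilde d$-bounded on $W$. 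For the last point one uses that $\pi_1$ is a bimeromorphic invariant of smooth compact complex manifolds, so $p_1$ induces $\pi_1(W)\cong\pi_1(\hat X)$ and $\widetilde W=W\times_{\hat X}\widetilde{\hat X}$, with $\widetilde{p_1}\colon\widetilde W\to\widetilde{\hat X}$ again a modification: if $\pi_{\hat X}^{\ast}(\omega^{k})=d\gamma$ with $\gamma$ smooth and bounded, then $\pi_W^{\ast}(\eta^{k})=d(\widetilde{p_1}^{\ast}\gamma)$, and $\widetilde{p_1}^{\ast}\gamma$ is smooth and bounded because $p_1^{\ast}g_{\hat X}\leqslant C\,g_W$.

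Now push down along $p_2$. The current $T:=p_{2\ast}(\eta^{k})$ is a closed $(k,k)$-current on the K\"{a}hler manifold $X$; let $\beta$ be its harmonic representative --- a smooth $d$-closed $(k,k)$-form with $[\beta]=p_{2\ast}[\eta^{k}]$. I claim $\beta$ is weakly balanced $k$-hyperbolic. For $\tilde d$-boundedness: $\pi_1(W)\cong\pi_1(X)$ again gives a Cartesian square and the base-change identity $\pi_X^{\ast}\circ p_{2\ast}=\widetilde{p_2}_{\ast}\circ\pi_W^{\ast}$, whence $\pi_X^{\ast}T=\widetilde{p_2}_{\ast}(d\gamma)=d(\widetilde{p_2}_{\ast}\gamma)$ with $\widetilde{p_2}_{\ast}\gamma$ a bounded $(2k-1)$-current (the fibres of $p_2$ have uniformly bounded volume); a mollification on $X$, which commutes with $\pi_X^{\ast}$ since $\pi_X$ is a local isometry for $\pi_X^{\ast}g_X$, then yields a smooth closed representative of $p_{2\ast}[\eta^{k}]$ together with a smooth bounded primitive of its pull-back, so by Lemma~\ref{l23} the class $p_{2\ast}[\eta^{k}]$, hence $\beta$, is $\tilde d$-bounded. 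This is the step that consumes the homotopy hypothesis: through the boundedness results of Section~\ref{sec:hyper} it is the $(2k-1)$-connectedness of $\widetilde W$ --- which, as $\widetilde{p_j}_{\ast}\widetilde{p_j}^{\ast}=\mathrm{id}$, also forces that of $\widetilde X$ and $\widetilde{\hat X}$ --- that legitimizes comparing $\tilde d$-boundedness across $p_1,p_2$ in degree $2k$. For the nef-big requirement: $\beta$ being $d$-closed, $[\beta^{t}]=(p_{2\ast}[\eta^{k}])^{t}\in H^{n-1,n-1}(X;\mathbb R)$; writing $p_2^{\ast}p_{2\ast}[\eta^{k}]=[\eta^{k}]+F$ with $F$ supported over $Y=p_2(V)$ (codimension $\geqslant 2$) and taking the $t$-th power gives $(p_2^{\ast}p_{2\ast}[\eta^{k}])^{t}=(p_1^{\ast}[\omega])^{n-1}+(\text{terms in }F)$; the leading term is nef and big on $W$ (as $[\omega]^{n}>0$) and the $F$-terms pair non-negatively with classes pulled back from $X$ and are irrelevant away from $Y$ (projection formula and negativity lemma), forcing $(p_{2\ast}[\eta^{k}])^{t}$ to be nef and big on $X$ in the sense of Section~2. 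Hence $X$ is weakly balanced $k$-hyperbolic, and exchanging $\hat X$ with $X$ gives the converse.

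I expect the $\tilde d$-boundedness transfer across $p_2$ to be the main obstacle: one must produce a genuine smooth bounded primitive on $\widetilde X$ although $\widetilde{p_2}_{\ast}\gamma$ is only a bounded current, singular over $Y$, and the class $(p_{2\ast}[\eta^{k}])^{t}$ differs from the clean class $(p_1^{\ast}[\omega])^{n-1}$ by exceptional corrections $F$; reconciling those corrections with $\tilde d$-boundedness is precisely what $\pi_i(W)=0$, $2\leqslant i\leqslant 2k-1$, is calibrated for, via the connectivity of the universal covers and the boundedness lemmas of Section~\ref{sec:hyper}, and the precise statement and proof of that descent (mollification together with the connectivity input) form the technical heart. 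A secondary, purely bookkeeping point is that $W$ is only of Fujiki class $\mathcal C$, not K\"{a}hler, so the negativity lemma and the Demailly--Paun characterization of big $(n-1,n-1)$-classes must be used in their bimeromorphic versions --- routine, but not to be skipped.
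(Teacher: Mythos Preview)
Your plan ``pull back to $W$, push down to $X$'' matches the paper at the level of a slogan, but the execution diverges from the paper at the two crucial steps, and in both places there is a genuine gap.

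\textbf{The $\tilde d$-boundedness transfer.} You propose an \emph{analytic} descent: push forward the bounded primitive $\gamma$ along $\widetilde{p_2}$ to get a bounded current, then mollify. You candidly flag this as the main obstacle, and indeed it does not go through as written: the push-forward of a bounded form along a modification need not be bounded (or even locally integrable with controlled norm) near the image of the exceptional set, and no mollification argument on $X$ repairs this. More importantly, you never actually use the hypothesis $\pi_i(W)=0$ for $2\leqslant i\leqslant 2k-1$; the sentence ``this is the step that consumes the homotopy hypothesis\ldots through the boundedness results of Section~\ref{sec:hyper}'' points to nothing concrete, and the claim that $(2k-1)$-connectedness of $\widetilde W$ forces that of $\widetilde X$ via $\widetilde{p_j}_\ast\widetilde{p_j}^\ast=\mathrm{id}$ is false (being a cohomological retract does not give connectivity). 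The paper's route is entirely different and purely \emph{topological}: using Corollary~\ref{c31} (built on the Brunnbauer--Kotschick--Sch\"onlinner theorem and Lemma~\ref{l31}), the vanishing of $\pi_i(W)$ implies $V^{2k}_{\mathrm{hyp}}(W)=c_{\tilde W,W}^\ast(V^{2k}_{\mathrm{hyp}}(BG))$, so the hyperbolic class $[\Xi]=(p_1)^\ast[\hat\omega^k]$ comes from some $[\alpha]\in V^{2k}_{\mathrm{hyp}}(BG)$; one then \emph{defines} $[\beta]:=c_{\tilde X,X}^\ast[\alpha]\in V^{2k}_{\mathrm{hyp}}(X)$ and checks $c_{\tilde X,X}\circ p_2=c_{\tilde W,W}$, hence $(p_2)^\ast[\beta]=[\Xi]$. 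No push-forward of primitives, no mollification.

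\textbf{The nef--big step.} Because the paper obtains the exact equality $(p_2)^\ast[\beta]=[\Xi]$, it gets $(p_2)^\ast[\beta^t]=[\Xi^t]=(p_1)^\ast[\hat\omega^{n-1}]$ with no exceptional correction, and then nefness and bigness of $[\beta^t]$ follow by the dualities $\mathcal E(X)^\vee=\overline{\mathcal B(X)}$ and $\overline{\mathcal K(X)}=\mathcal N(X)^\vee$ together with the projection formula. In your approach $[\beta]=(p_2)_\ast[\eta^k]$ only, so $(p_2)^\ast[\beta]=[\eta^k]+F$ with a genuinely nonzero exceptional term $F$, and you must control mixed terms like $[\eta^k]^{t-1}\cdot F\cdot(p_2)^\ast[P]$. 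There is no ``negativity lemma'' for $(k,k)$-classes that handles this, and the sign of such cross terms is not determined. The paper's classifying-space trick is precisely what eliminates $F$.

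In short: the missing idea is the passage through $BG$ (Corollary~\ref{c31}), which is where the homotopy hypothesis enters and which simultaneously yields both $\tilde d$-boundedness on $X$ and the clean identity $(p_2)^\ast[\beta]=[\Xi]$ needed for the positivity argument.
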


When $f$ is a smooth modification, we have
\begin{theorem}\label{t13}
Let $f:\hat{X}\rightarrow X$ be a smooth modification between compact complex manifolds of dimension $n$.
\begin{enumerate}
\item[(1)] Assume that the higher homotopy groups $\pi_{i}(\hat{X})=0$ for $2\leqslant i\leqslant 2n-3$. If $\hat{X}$ is balanced hyperbolic, then $X$ is also balanced hyperbolic.
\item[(2)] If $X$ is semi-balanced hyperbolic (resp. semi-K\"{a}hler $k$-hyperbolic), then $\hat{X}$ is also semi-balanced hyperbolic (resp. semi-K\"{a}hler $k$-hyperbolic).
\item[(3)] Fix a positive integer $k\mid(n-1)$. Assume that both $\hat{X}$ and $X$ are K\"{a}hler, and the higher homotopy groups $\pi_{i}(\hat{X})=0$ for $2\leqslant i\leqslant 2k-1$. If $\hat{X}$ is weakly balanced $k$-hyperbolic, then $X$ is also weakly balanced $k$-hyperbolic.
\end{enumerate}
\end{theorem}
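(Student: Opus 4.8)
The three assertions have different characters, and I would treat them in the order (2), (1), (3). For (2) the plan is simply to pull back. Since $f$ is holomorphic and $\mathrm{codim}\,Y\geqslant 2$, it induces an isomorphism $\pi_{1}(\hat X)\xrightarrow{\ \sim\ }\pi_{1}(X)$ and hence lifts to a proper equivariant modification $\tilde f\colon\tilde{\hat X}\to\tilde X$ of the universal covers, with $\|d\tilde f\|$ bounded (it is the pull-back of $\|df\|$ on the compact $\hat X$). Given a semi-balanced hyperbolic form $\omega$ on $X$, I would set $\hat\omega:=f^{*}\omega$: then $\hat\omega^{n-1}=f^{*}(\omega^{n-1})$ is still $d$-closed and non-negative, it is strictly positive on the non-empty Zariski open set $f^{-1}(U\setminus Y)$ (where $U$ is the locus of strict positivity of $\omega^{n-1}$ and $f$ is a biholomorphism there), and $[\hat\omega^{n-1}]=f^{*}[\omega^{n-1}]$ is hyperbolic because the $\tilde f$-pull-back of a bounded primitive on $\tilde X$ is a bounded primitive on $\tilde{\hat X}$ (the pull-back half of Lemma~\ref{l23}). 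The semi-Kähler $k$-hyperbolic case is verbatim the same, with $\omega$ and $\omega^{k}$ in place of $\omega^{n-1}$; note that no homotopy or Kähler hypothesis is used here.

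For (1), when $f$ is not biholomorphic it is a divisorial contraction, and since the graph of a holomorphic map is closed we have $\bar\Gamma_{f}\cong\hat X$ and $W=\hat X$, so the hypothesis here is precisely the one in Theorem~\ref{t11}(1); the biholomorphic case being trivial, (1) is a special case of Theorem~\ref{t11}(1). A direct argument also runs as follows: by Alessandrini--Bassanelli \cite{AB95} $X$ is again balanced; the positive $d$-closed $(n-1,n-1)$-current $f_{*}(\hat\omega^{n-1})$, which is smooth off $Y$, is regularized---preserving its class $f_{*}[\hat\omega^{n-1}]$ and its positivity, using $\mathrm{codim}\,Y\geqslant 2$ and the $\partial\bar\partial$-techniques of \cite{AB95}---to a smooth positive $d$-closed $(n-1,n-1)$-form, whose Michelsohn root is a balanced metric on $X$; it then remains only to see that $f_{*}[\hat\omega^{n-1}]$ is hyperbolic, which is the descent statement discussed below.

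For (3), since both manifolds are Kähler I would first pass to cohomology: for a $d$-closed real smooth $(k,k)$-form $\beta$ one has $[\beta^{t}]=[\beta]^{\smile t}$, and conversely every class in $H^{k,k}(X,\mathbb R)$ has a smooth $d$-closed $(k,k)$-representative by Hodge theory, so $X$ is weakly balanced $k$-hyperbolic iff there is a hyperbolic class $b\in H^{k,k}(X,\mathbb R)$ with $b^{\smile t}$ nef and big (and likewise on $\hat X$). I would then take $b:=f_{*}\hat b$, where $\hat b:=[\hat\beta]$. That $b^{\smile t}$ is nef and big I would deduce from the same property of $\hat b^{\smile t}=[\hat\beta^{t}]$: push-forward under a modification sends nef (resp.\ big) $(n-1,n-1)$-classes to nef (resp.\ big) ones---test against curves and their strict transforms and use the projection formula---and $(f_{*}\hat b)^{\smile t}$ differs from $f_{*}(\hat b^{\smile t})$ only by classes supported on $Y$, which one checks (using $\mathrm{codim}\,Y\geqslant 2$) does not destroy nef-and-bigness. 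The remaining, and essential, point is that $b=f_{*}\hat b$ is hyperbolic.

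This descent is, I expect, the real obstacle, common to (1) and (3): if $\hat c\in H^{m}(\hat X,\mathbb R)$ is hyperbolic, with $m=2n-2$ in (1) and $m=2k$ in (3), then $c:=f_{*}\hat c$ is hyperbolic---and this is what consumes the hypothesis $\pi_{i}(\hat X)=0$ for $2\leqslant i\leqslant m-1$. By Hurewicz that hypothesis makes $\tilde{\hat X}$ an $(m-1)$-connected space, with two effects. First, the classifying map $\hat X\to B\pi_{1}(X)$ becomes $m$-connected, which via Poincaré duality forces $f^{*}\colon H^{m}(X,\mathbb R)\to H^{m}(\hat X,\mathbb R)$ to be an isomorphism in case (1) (the point being that $f_{*}\colon H_{2}(\hat X)\to H_{2}(X)$ is injective once $\pi_{2}(\hat X)=0$; this fails for a blow-up of a point, which indeed violates the hypothesis), so that there $\hat c=f^{*}c$ outright. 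Second, every closed form of degree $\leqslant m-1$ on $\tilde{\hat X}$ is exact and, given the bounded geometry, carries a bounded primitive after passing to a cohomologous form; I would use this to replace the given bounded primitive $\hat\gamma$ of $\pi_{\hat X}^{*}\hat c=\tilde f^{*}\pi_{X}^{*}c$ by one that, near the lifted exceptional locus $\tilde E$, agrees with $\tilde f^{*}$ of a smooth local primitive of $\pi_{X}^{*}c$ near $\tilde Y$ (available since $\tilde Y$ has small real dimension, and in case (3) because the hypothesis also kills the relevant cohomology of $\tilde Y$ in degree $m$). Transporting this modified primitive down by $\tilde f$ then yields a genuine smooth bounded primitive of $\pi_{X}^{*}c$ on $\tilde X$---near $\tilde Y$ it is the local primitive, away from $\tilde Y$ it is the image of $\hat\gamma$ under the biholomorphism $\tilde f$, whose distortion is uniformly bounded off a neighborhood of $\tilde Y$---whence $c$ is hyperbolic. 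The fiddly spots I anticipate are: (a) the gluing of the two primitives across $\tilde E$, which is where the \emph{entire} range $2\leqslant i\leqslant m-1$ of the homotopy hypothesis is needed, not merely $\pi_{2}(\hat X)=0$; and (b) in case (3), that $\hat c=f^{*}c$ may fail (the defect sits in $\ker f_{*}$), so the descent must be run without first splitting off that defect, or one must check separately that the defect class is again hyperbolic---while in the corner $n=2$, $k=1$ one can sidestep the issue and invoke the birational invariance of weak Kähler hyperbolicity from \cite{BCDT24,BDET24}.
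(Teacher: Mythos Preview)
Your treatment of (2) matches the paper's, and your observation that (1) is the special case $W=\hat X$ of Theorem~\ref{t11}(1) is correct.

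The genuine gap is in your descent argument for (1) and (3). You try to push a bounded primitive down along $\tilde f$ after a gluing modification near $\tilde E$, and you yourself flag the two unresolved obstacles: (a) the gluing across $\tilde E$, and (b) in case (3) the defect $\hat c - f^{*}f_{*}\hat c$. Neither is handled, and the claim that ``every closed form of degree $\leqslant m-1$ on $\tilde{\hat X}$ carries a bounded primitive after passing to a cohomologous form'' is exactly the kind of statement that fails in general for non-compact covers---boundedness of primitives is not a cohomological condition. The paper avoids all of this with a classifying-space argument (Corollary~\ref{c31}, based on \cite{BKS24}): under $\pi_i(\hat X)=0$ for $2\leqslant i\leqslant m-1$ one has $c_{Z,\hat X}^{*}\bigl(V^{m}_{\mathrm{hyp}}(BG)\bigr)=V^{m}_{\mathrm{hyp}}(\hat X)$, so the hyperbolic class $[\hat\beta]$ lifts to some $[\alpha]\in V^{m}_{\mathrm{hyp}}(BG)$, and $[\beta]:=c_{\tilde X,X}^{*}[\alpha]$ is then hyperbolic on $X$ with $f^{*}[\beta]=[\hat\beta]$ on the nose.

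That exact identity $f^{*}[\beta]=[\hat\beta]$ is the second payoff you are missing in (3). It gives $f^{*}[\beta^{t}]=[\hat\beta^{t}]$ and hence $f_{*}[\hat\beta^{t}]=[\beta^{t}]$ with no correction term, so the question of comparing $(f_{*}\hat b)^{\smile t}$ with $f_{*}(\hat b^{\smile t})$ never arises. Nefness and bigness of $[\beta^{t}]$ are then read off by duality: if $[\beta^{t}]\cdot[P]<0$ for some pseudo-effective $[P]$, pull back to get $[\hat\beta^{t}]\cdot f^{*}[P]<0$, contradicting nefness of $[\hat\beta^{t}]$ (via (\ref{e21})); similarly for bigness using (\ref{e31}). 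Your route through ``push-forward preserves nef/big for $(n-1,n-1)$-classes'' plus an error supported on $Y$ is neither needed nor, as stated, justified.
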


\begin{rem}
\cite{BCDT24} develops a beautiful topological technique to show that for a birational transform $f:\hat{X}\dashrightarrow X$, $\hat{X}$ is weakly K\"{a}hler 1-hyperbolic if and only if $X$ is weakly K\"{a}hler 1-hyperbolic. Our theorems owe it a lot. In particular, Theorem \ref{t12} extends their result to the $k$-hyperbolicity.
\end{rem}

In the ending section, we attempt to get rid of the vanishing condition about the higher homotopy groups. It is possible at least when $\dim Y=0$, in which case we call $f$ a {\it contraction to points}.

\begin{theorem}\label{t14}
Let $f:\hat{X}\dashrightarrow X$ be a contraction to points between compact complex manifolds of dimension $n$. If $\hat{X}$ is balanced hyperbolic, then $X$ is also balanced hyperbolic.
\end{theorem}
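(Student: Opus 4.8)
\medskip

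The plan is to descend the balanced hyperbolic structure from $\hat X$ to $X$ through the roof $W$: push forward a metric power together with its bounded primitive, and then repair both over the finitely many points of the center. I assume $n\geqslant 2$, the case $n=1$ being trivial. Since $\dim Y=0$, write $Y=\{y_{1},\dots,y_{m}\}$, fix a balanced metric $\hat\omega$ on $\hat X$ with $\pi_{\hat X}^{\ast}\hat\omega^{n-1}=d\beta$ and $\sup_{\tilde{\hat X}}\|\beta\|<\infty$, and set $\omega_{0}:=(f^{-1})^{\ast}\hat\omega$, a balanced metric on $X\setminus Y$ satisfying $p_{2}^{\ast}\omega_{0}=p_{1}^{\ast}\hat\omega$ over $W$ minus the exceptional loci. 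Then $T:=(p_{2})_{\ast}\bigl((p_{1}^{\ast}\hat\omega)^{n-1}\bigr)$ is a positive $d$-closed $(n-1,n-1)$-current on $X$, equal to the smooth form $\omega_{0}^{n-1}$ on $X\setminus Y$ and of locally finite mass near each $y_{j}$.

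First I would regularize $T$ into a genuine Hermitian metric. As $Y$ is $0$-dimensional, the link of each $y_{j}$ is a sphere $S^{2n-1}$, and the Alessandrini--Bassanelli argument behind the modification theorem \cite{AB95} takes here its simplest form: inside small, pairwise disjoint coordinate balls $B(y_{j},\varepsilon)$ one replaces $T$ by the $(n-1)$-st power of a local balanced model, joined to $\omega_{0}^{n-1}$ by a $d$-closed interpolation, giving a Hermitian metric $\omega$ on $X$ with $d\omega^{n-1}=0$ and $\omega^{n-1}=T=\omega_{0}^{n-1}$ on $X\setminus\bigcup_{j}B(y_{j},\varepsilon/2)$. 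I expect this step to be the main obstacle, since one must keep the modified form simultaneously positive, $d$-closed and an $(n-1)$-st power while matching $T$ on the surrounding annuli; but this is precisely the local analysis of \cite{AB95}, which goes through because the center has dimension zero.

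Next I would transport the bounded primitive. Modifications preserve $\pi_{1}$, so $\tilde W:=W\times_{X}\tilde X$ is the universal cover of $W$ (equivalently $W\times_{\hat X}\tilde{\hat X}$), and the lifts $\tilde p_{1}\colon\tilde W\to\tilde{\hat X}$, $\tilde p_{2}\colon\tilde W\to\tilde X$ of $p_{1},p_{2}$ are proper modifications. From $\pi_{W}^{\ast}(p_{1}^{\ast}\hat\omega)^{n-1}=\tilde p_{1}^{\ast}\pi_{\hat X}^{\ast}\hat\omega^{n-1}=d(\tilde p_{1}^{\ast}\beta)$, with $\tilde p_{1}^{\ast}\beta$ bounded (pullback of a bounded form under a map of bounded dilation), $(p_{1}^{\ast}\hat\omega)^{n-1}$ is $\tilde d$-bounded on $W$. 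Using the base-change identity $\pi_{X}^{\ast}\circ(p_{2})_{\ast}=(\tilde p_{2})_{\ast}\circ\pi_{W}^{\ast}$ for the proper map $p_{2}$, together with $d\circ(\tilde p_{2})_{\ast}=(\tilde p_{2})_{\ast}\circ d$, one obtains $\pi_{X}^{\ast}T=d\Theta$ with $\Theta:=(\tilde p_{2})_{\ast}(\tilde p_{1}^{\ast}\beta)$. Over $\tilde X\setminus\pi_{X}^{-1}(Y)$ the map $\tilde p_{2}$ is biholomorphic, and on the complement of any fixed family of balls $\pi_{X}^{-1}(B(y_{j},\delta))$ its inverse has bounded dilation, by $\pi_{1}$-equivariance and compactness of $X\setminus\bigcup_{j}B(y_{j},\delta)$; hence $\Theta$ is a bounded primitive of $\pi_{X}^{\ast}\omega^{n-1}$ away from small balls around the points of $\pi_{X}^{-1}(Y)$.

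Finally I would mend $\Theta$ near each $p\in\pi_{X}^{-1}(Y)$ lying over some $y_{j}$. The Poincar\'{e} lemma gives a bounded smooth primitive $\psi_{p}$ of the smooth closed form $\pi_{X}^{\ast}\omega^{n-1}$ on $B(p,\varepsilon)$; on the annulus $A_{p}=B(p,\varepsilon)\setminus\overline{B(p,\varepsilon/2)}$, where $\omega^{n-1}=T$, the bounded closed form $\Theta-\psi_{p}$ is exact because $A_{p}\simeq S^{2n-1}\times(0,1)$ and $H^{2n-3}(S^{2n-1})=0$, say $\Theta-\psi_{p}=d\phi_{p}$ with $\phi_{p}$ bounded on $\overline{A_{p}}$. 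Splicing $\Theta$ (outside $B(p,\varepsilon)$) to $\psi_{p}$ (inside $B(p,\varepsilon/2)$) by a cutoff $\chi_{p}$ and adding the correction $-\,d\chi_{p}\wedge\phi_{p}$ yields a bounded primitive of $\pi_{X}^{\ast}\omega^{n-1}$ near $p$ agreeing with $\Theta$ near $\partial B(p,\varepsilon)$; choosing $\varepsilon,\psi_{p},\phi_{p},\chi_{p}$ $\pi_{1}(X)$-equivariantly over the finitely many orbits of $\pi_{X}^{-1}(Y)$, these repairs glue with $\Theta$ into a globally bounded primitive of $\pi_{X}^{\ast}\omega^{n-1}$ on $\tilde X$. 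Then $\omega^{n-1}$ is $\tilde d$-bounded, so $\omega$ is a balanced hyperbolic metric and $X$ is balanced hyperbolic. Apart from the regularization flagged above, only the base-change formula in the current setting and the equivariant bookkeeping require care, both being routine.
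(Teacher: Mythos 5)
Your overall strategy---push forward $\hat\omega^{n-1}$ and its bounded primitive through the roof $W$, then repair both near the finitely many points of $Y$---is genuinely different from the paper's, but it has a gap at its very first step, and the later steps depend on it. You assert that the Alessandrini--Bassanelli argument produces a balanced metric $\omega$ on $X$ with $\omega^{n-1}=T:=(p_{2})_{\ast}\bigl((p_{1}^{\ast}\hat\omega)^{n-1}\bigr)$ outside arbitrarily small balls around the $y_{j}$, obtained by a ``$d$-closed local interpolation'' with a local balanced model. That is not what \cite{AB95} does: their proof is a global duality argument (Michelsohn's Hahn--Banach criterion for the existence of balanced metrics, combined with the support theorem for positive $\partial\bar\partial$-closed currents, cf.\ Proposition \ref{p21} and Proposition \ref{p31}), which produces \emph{some} balanced representative of the pushed-forward class with no control on where it differs from $T$; the correction is a global Aeppli-exact current, not a form supported near $Y$. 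A local surgery of the kind you describe is obstructed by positivity: $T=\omega_{0}^{n-1}$ blows up as one approaches $y_{j}$, and interpolating two closed positive $(n-1,n-1)$-forms on an annulus forces a correction term $d\chi\wedge\phi$ that destroys positivity; unlike the $(1,1)$ K\"ahler case there is no plurisubharmonic-potential trick to restore it. Since your mending step uses $d\Theta=\pi_{X}^{\ast}\omega^{n-1}$ on the annuli $A_{p}$ (i.e.\ $\omega^{n-1}=T$ there), and since the actual discrepancy $\omega^{n-1}-T$ is only Aeppli-exact as a \emph{current} (so it is neither $d$-exact in general nor correctable by Lemma \ref{l23}, which requires smooth corrections), the gluing of $\Theta$ with the local primitives $\psi_{p}$ does not go through as written.

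For contrast: the paper never works with a specific representative near $Y$. It shows that the de Rham class $(p_{2})_{\ast}(p_{1})^{\ast}[\hat\Omega]=(l_{2})_{\ast}(l_{1})^{\ast}[\hat\Omega]$ is hyperbolic by a purely topological argument --- Lemma \ref{l41} gives $\pi_{i}(\hat X\setminus E)\simeq\pi_{i}(X)$ for $i\leqslant 2n-2$ because $\dim Y=0$, and the Postnikov-tower space $U$ (Lemma \ref{l42}, Propositions \ref{p41}--\ref{p42}) plays the role of $BG$ for degree-$(2n-2)$ hyperbolic classes --- and separately that this class is balanced by \cite{AB95}; Lemma \ref{l23} then transfers $\tilde d$-boundedness to the balanced representative. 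Your steps 2 and 3 (base change of the primitive, boundedness away from $\pi_{X}^{-1}(Y)$, the $H^{2n-3}(S^{2n-1})=0$ gluing, equivariance over finitely many orbits) are sound in themselves and would give an attractive, more hands-on proof, but only after the localization claim is either justified or replaced by an argument comparing $\omega^{n-1}$ and $T$ through a potential that is smooth and bounded away from $Y$.
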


It naturally leads to the following generalization. Note that we denote by $V^{k}_{\textrm{hyp}}$ the collection of the hyperbolic $k$-cohomology classes, i.e., cohomology classes with hyperbolic representatives.
\begin{theorem}\label{t15}
Let $f:\hat{X}\dashrightarrow X$ be a birational transform between compact complex manifolds of dimension $n$, such that the dimension of center $Y$ is $n-k-1$. Then 
\[
(q_{2}|_{\Gamma_{f}})^{\ast}(V^{2k}_{\textrm{hyp}}(X))=V^{2k}_{\textrm{hyp}}(\Gamma_{f})\quad\textrm{and}\quad(q_{2}|_{\Gamma_{f}})_{\ast}(V^{2k}_{\textrm{hyp}}(\Gamma_{f}))=V^{2k}_{\textrm{hyp}}(X).
\]

Moreover, if $k\mid(n-1)$, $\hat{X}$ is weakly K\"ahler $k$-hyperbolic and $X$ is a K\"ahler manifold, then $X$ is weakly balanced $k$-hyperbolic.
\end{theorem}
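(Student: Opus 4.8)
\textbf{Proof proposal for Theorem \ref{t15}.}

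The plan is to deduce everything from one geometric observation. By the very definition of $\Gamma_{f}$ the restriction $q_{2}|_{\Gamma_{f}}\colon\Gamma_{f}\to X$ factors as $\Gamma_{f}\xrightarrow{\,q_{1}|_{\Gamma_{f}}\,}\hat X\setminus E\xrightarrow{\,f\,}X\setminus Y$, so it is a biholomorphism of $\Gamma_{f}$ onto the Zariski open set $X\setminus Y$; and the hypothesis $\dim Y=n-k-1$ says precisely that $Y$ has real codimension $2k+2$ in $X$. From the codimension bound we extract two standard facts: $H^{j}(X)\to H^{j}(X\setminus Y)$ is an isomorphism for $j\le 2k$, and $\pi_{1}(X\setminus Y)=\pi_{1}(X)$, so that the universal cover of $X\setminus Y$ is $\tilde X\setminus\pi^{-1}(Y)$ with $\pi^{-1}(Y)$ again of real codimension $2k+2$ in $\tilde X$. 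Transported along $q_{2}|_{\Gamma_{f}}$, these give the linear isomorphism $H^{2k}(\Gamma_{f})\cong H^{2k}(X)$ underlying both displayed equalities.

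The inclusion $(q_{2}|_{\Gamma_{f}})^{\ast}\bigl(V^{2k}_{\textrm{hyp}}(X)\bigr)\subseteq V^{2k}_{\textrm{hyp}}(\Gamma_{f})$ is immediate: the restriction to $\tilde X\setminus\pi^{-1}(Y)$ of a bounded primitive of $\pi^{\ast}\alpha$ is still bounded, and by Lemma \ref{l23} this passes to cohomology classes. The reverse inclusion is the analytic heart of the statement. Given a $\tilde d$-bounded closed $2k$-form on $X\setminus Y$, let $\alpha\in H^{2k}(X)$ be the unique class restricting to its class; one must build a bounded primitive of $\pi^{\ast}\alpha$ on all of $\tilde X$. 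I would argue by a $\Gamma$-equivariant cut-off: choose a $\Gamma$-invariant neighbourhood $U\supset\pi^{-1}(Y)$ deformation retracting onto $\pi^{-1}(Y)$, extend the given bounded primitive from $\tilde X\setminus\pi^{-1}(Y)$ across $U$, and absorb the patching term. The abstract vanishing $H^{2k-1}_{\pi^{-1}(Y)}(\tilde X)=H^{2k}_{\pi^{-1}(Y)}(\tilde X)=0$, valid because $\pi^{-1}(Y)$ has real codimension $\ge 2k+2$, already shows the extension exists as a closed form; the point needing care is the $L^{\infty}$ bound, and for that one solves the local patching problem on $U$ by a Poincar\'e-type argument transverse to $\pi^{-1}(Y)$, producing primitives of degree $2k-1$ whose sup-norm is controlled by that of the datum, uniformly since $Y$ is compact. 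This yields $\alpha\in V^{2k}_{\textrm{hyp}}(X)$. The identity for $(q_{2}|_{\Gamma_{f}})_{\ast}$ is then formal: over $X\setminus Y$ the pushforward inverts the pullback, so $(q_{2}|_{\Gamma_{f}})_{\ast}\bigl(V^{2k}_{\textrm{hyp}}(\Gamma_{f})\bigr)=V^{2k}_{\textrm{hyp}}(X)$ restates the first equality.

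For the final assertion I would re-run the proof of Theorem \ref{t12}, with $\dim Y=n-k-1$ replacing the vanishing of the higher homotopy groups of $W$. Let $\hat\omega$ be a weakly K\"{a}hler $k$-hyperbolic form on $\hat X$; then $p_{1}^{\ast}\hat\omega$ is a nef and big $(1,1)$-class on $W$ and $(p_{1}^{\ast}\hat\omega)^{k}$ is a $\tilde d$-bounded closed $(k,k)$-form on $W$, pullback under the modification $p_{1}$ preserving $\tilde d$-boundedness ($\pi_{1}$ being unchanged). Transported through the biholomorphism $q_{1}|_{\Gamma_{f}}$, the class $(q_{1}|_{\Gamma_{f}})^{\ast}[\hat\omega^{k}]$ lies in $V^{2k}_{\textrm{hyp}}(\Gamma_{f})$, so by the part already proved the unique class $\alpha\in H^{2k}(X)$ with $(q_{2}|_{\Gamma_{f}})^{\ast}\alpha$ equal to it lies in $V^{2k}_{\textrm{hyp}}(X)$; concretely $\alpha=(p_{2})_{\ast}\bigl[(p_{1}^{\ast}\hat\omega)^{k}\bigr]$. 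Since $X$ is K\"{a}hler, hence balanced, and $\alpha$ is of type $(k,k)$ — being represented by the closed $(k,k)$-current $(p_{2})_{\ast}\bigl((p_{1}^{\ast}\hat\omega)^{k}\bigr)$ — it admits a smooth real $(k,k)$-representative $\beta$, necessarily $\tilde d$-bounded. Finally $(p_{1}^{\ast}\hat\omega)^{n-1}=p_{1}^{\ast}[\hat\omega]^{n-1}$ is nef and big on $W$, its pushforward $(p_{2})_{\ast}\bigl((p_{1}^{\ast}\hat\omega)^{n-1}\bigr)$ is nef and big on the K\"{a}hler manifold $X$ (push forward a K\"{a}hler current representing it), and this pushforward agrees with $[\beta^{t}]=\alpha^{t}$ up to classes supported over $Y$; neutralising those in total degree $2n-2$ is exactly what the count $\dim Y=n-k-1$ buys, just as in Theorem \ref{t12}. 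Hence $[\beta^{t}]$ is nef and big and $X$ is weakly balanced $k$-hyperbolic.

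The two steps I expect to demand genuine work are the $\Gamma$-equivariant extension with $L^{\infty}$ control near $\pi^{-1}(Y)$ — where one needs a quantitative, not merely cohomological, Poincar\'e lemma — and the identification of $\alpha^{t}$ with $(p_{2})_{\ast}\bigl((p_{1}^{\ast}\hat\omega)^{n-1}\bigr)$ modulo exceptional classes, i.e.\ bounding the failure of $(p_{2})_{\ast}$ to commute with cup powers; it is in this latter step that the exact value $\dim Y=n-k-1$, rather than merely $\textrm{codim}\,Y\ge k+1$, is used.
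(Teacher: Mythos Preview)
Your route to the first equality is genuinely different from the paper's, and the step you yourself flag as needing care --- the $\Gamma$-equivariant extension of a bounded primitive across $\pi^{-1}(Y)$ with $L^\infty$ control --- is a real gap. A bounded $(2k-1)$-form on $\tilde X\setminus\pi^{-1}(Y)$ has no reason to extend boundedly across an analytic subset, and there is no ready-made quantitative Poincar\'e lemma transverse to a possibly singular $\pi^{-1}(Y)$. The paper sidesteps the analysis entirely. From $\dim Y=n-k-1$ it extracts not just the cohomology isomorphism and $\pi_1(X\setminus Y)\cong\pi_1(X)$, but the full range of isomorphisms $(l_2)_\ast\colon\pi_i(\Gamma_f)\to\pi_i(X)$ for $1\le i\le 2k$ (Corollary~\ref{c41}). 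Hence $\Gamma_f$ and $X$ share the same $(2k-1)$-st Postnikov stage $U_k$, and Corollary~\ref{c45}, which identifies $V^{2k}_{\mathrm{hyp}}(-)$ with $c_{-,k}^\ast V^{2k}_{\mathrm{hyp}}(U_k)$, gives $(l_2)^\ast V^{2k}_{\mathrm{hyp}}(X)=V^{2k}_{\mathrm{hyp}}(\Gamma_f)$ at once. The boundedness problem is dealt with once, in Proposition~\ref{p41}/Corollary~\ref{c43}: one extends the primitive across each attached cell $D^m$ ($m\ge 2k+1$) using $H^{2k-1}(S^{m-1})=0$, and boundedness is automatic because every piece is compact. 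That simplicial detour is precisely what makes the extension tractable; your direct analytic route, even if it can be made to work for smooth $Y$, is not the paper's argument.

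For the second assertion you have made the identification harder than it is. Having established $(l_2)^\ast\alpha=(l_1)^\ast[\hat\omega^k]$ in $H^{2k}(\Gamma_f)$ --- which is exactly how $\alpha$ arises --- pull-back commutes with cup product, so $(l_2)^\ast\alpha^t=(l_1)^\ast[\hat\omega^{n-1}]$, and applying $(l_2)_\ast$ gives $\alpha^t=(p_2)_\ast(p_1)^\ast[\hat\omega^{n-1}]$ \emph{exactly}; there are no exceptional classes to neutralise, and the count $\dim Y=n-k-1$ is already spent in the first part. Nefness and bigness of $\alpha^t$ then follow via the dualities $\mathcal E^\vee=\overline{\mathcal B}$ and $\overline{\mathcal K}=\mathcal N^\vee$ just as in Theorem~\ref{t12}: a nonzero pseudo-effective (resp.\ nef) $(1,1)$-class $[P]$ with $\alpha^t\cdot[P]\le 0$ would give $(p_1)^\ast[\hat\omega^{n-1}]\cdot(p_2)^\ast[P]\le 0$, contradicting that $(p_1)^\ast[\hat\omega^{n-1}]$ is nef and big on $W$. ``Pushing forward a K\"ahler current'' is not the right device for an $(n-1,n-1)$-class here.
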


All of the theorems above show that these hyperbolicities surely possess certain invariance under the birational transform, and are also good candidates to solve Koll\'{a}r's problem.

\section{Preliminary}
\label{sec:preliminary}
Let $X$ be a compact complex manifold of dimension $n$. 

\subsection{Positive cones}
We first recall a few positive cones in the Bott--Chern cohomology groups 
\[
H^{1,1}_{\textrm{BC}}(X,\mathbb{R})\quad\textrm{and}\quad H^{n-1,n-1}_{\textrm{BC}}(X,\mathbb{R}).
\] 
Note a $(1,1)$-class is usually called a divisor class, while an $(n-1,n-1)$-class a curve class.

\begin{definition}\label{d21}
\begin{enumerate}
\item Let 
\[
\mathcal{E}(X):=\{[T]\in H^{1,1}_{\textrm{BC}}(X,\mathbb{R})\mid T\textrm{ is a positive }d\textrm{-closed }(1,1)\textrm{-current on }X\},
\]
and
\[
\mathcal{N}(X):=\{[T]\in H^{n-1,n-1}_{\textrm{BC}}(X,\mathbb{R})\mid T\textrm{ is a positive }d\textrm{-closed }(n-1,n-1)\textrm{-current on }X\}.
\]
They are obviously closed cones in $H^{1,1}_{\textrm{BC}}(X,\mathbb{R})$ and $H^{n-1,n-1}_{\textrm{BC}}(X,\mathbb{R})$ respectively, which are called the pseudo-effective divisor cone and the pseudo-effective curve cone.
\item $\mathcal{E}(X)^{\circ}$ (resp. $\mathcal{N}(X)^{\circ}$) is called the big divisor (resp. curve) cone.
\item Let
\[
\mathcal{K}(X):=\{[\omega]\in H^{1,1}_{\textrm{BC}}(X,\mathbb{R})\mid\omega\textrm{ is a smooth, strictly positive }d\textrm{-closed }(1,1)\textrm{-form on }X\}, 
\]
and
\[
\begin{split}
\mathcal{B}(X):=&\{[\Omega]\in H^{n-1,n-1}_{\textrm{BC}}(X,\mathbb{R})\mid\Omega\textrm{ is a smooth, strictly positive }d\textrm{-closed } \\
&(n-1,n-1)\textrm{-form on }X\}.
\end{split}
\]
They are obviously open cones in $H^{1,1}_{\textrm{BC}}(X,\mathbb{R})$ and $H^{n-1,n-1}_{\textrm{BC}}(X,\mathbb{R})$ respectively, which are called the K\"{a}hler cone and the balanced cone of $X$.
\item Fix a Hermitian metric $\sigma$ on $X$. Let
\[
\begin{split}
\overline{\mathcal{K}(X)}:=&\{[\omega]\in H^{1,1}_{\textrm{BC}}(X,\mathbb{R})\mid\textrm{For any }\varepsilon>0, \textrm{there exists a smooth representative }\omega_{\varepsilon}\in[\omega] \\
&\textrm{ such that }\omega_{\varepsilon}\geqslant-\varepsilon\sigma\},
\end{split} 
\]
and
\[
\begin{split}
\overline{\mathcal{B}(X)}:=&\{[\Omega]\in H^{n-1,n-1}_{\textrm{BC}}(X,\mathbb{R})\mid\textrm{For any }\varepsilon>0, \textrm{there exists a smooth representative }\\
&\Omega_{\varepsilon}\in[\Omega]\textrm{ such that }\Omega_{\varepsilon}\geqslant-\varepsilon\sigma^{n-1}\}.
\end{split}
\]
$\overline{\mathcal{K}(X)}$ (resp. $\overline{\mathcal{B}(X)}$) is called the nef divisor (resp. curve) cone of $X$. Clearly, $\overline{\mathcal{K}(X)}\subseteq\mathcal{E}(X)$ and $\overline{\mathcal{B}(X)}\subseteq\mathcal{N}(X)$. When $\mathcal{K}(X)$ (resp. $\mathcal{B}(X)$) is not empty, $\overline{\mathcal{K}(X)}$ (resp. $\overline{\mathcal{B}(X)}$) is its closure.
\end{enumerate}
\end{definition}
A $(1,1)$-class is called pseudo-effective (resp. big, nef,...) if it belongs to $\mathcal{E}(X)$ (resp. $\mathcal{E}(X)^{\circ}$, $\overline{\mathcal{K}(X)}$,...). The positivity for the $(n-1,n-1)$-classes is similarly defined.

We should recall some basic properties of the balanced cone in \cite{FX14}. Remember that a Hermitian metric $\omega$ such that $d\omega^{n-1}=0$ is called a balanced metric. We say $X$ is a balanced manifold, if there exists a balanced metric on it. In fact, the existence of a balanced metric $\omega$ is equivalent to the existence of a $d$-closed strictly positive $(n-1,n-1)$-form $\Omega$ by \cite{Mic83}.
Hence, for convenience, such $\Omega$ will also be called a balanced metric, which leads to the definition of the balanced cone $\mathcal{B}(X)$ above. In this paper, we will always use $\omega,\hat{\omega}$ to denote the $(1,1)$-form and the capital letters such as $\Omega,\hat{\Omega}$ to denote the $(n-1,n-1)$-form.

The relationship between the balanced cone and the divisor cone is subtle. We denote by $A^{p,q}(X)$ the space of the smooth $\mathbb{C}$-valued $(p,q)$-forms and by $A^{p,q}_{\mathbb{R}}(X)$ the space of the smooth $\mathbb{R}$-valued $(p,q)$-forms. Define
\[
V^{p,p}(X,\mathbb{R}):=\frac{\{\phi\in A^{p,p}_{\mathbb{R}}(X)|\partial\bar{\partial}\phi=0\}}{\{\partial A^{p-1,p}(X)+\bar{\partial}A^{p,p-1}(X)\}\cap A^{p,p}_{\mathbb{R}}(X)}.
\]
It is well-known that we can replace $A^{p,p}$ by the space of $(p,p)$-currents in the above definition. If we denote $\mathcal{E}_{\partial\bar{\partial}}(X)\subseteq V^{1,1}(X,\mathbb{R})$ the convex cone generated by $\partial\bar{\partial}$-closed positive $(1,1)$-currents, it is proved in \cite{FX14} (Lemma 3.3 \& Remark 3.4) that
\begin{lemma}\label{l21}
If $X$ is a compact balanced manifold, then $\mathcal{E}_{\partial\bar{\partial}}(X)^{\vee}=\overline{\mathcal{B}(X)}$.
\end{lemma}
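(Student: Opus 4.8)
The plan is to realise both cones inside $H^{n-1,n-1}_{\mathrm{BC}}(X,\mathbb{R})$ via Bott--Chern/Aeppli duality and to establish the two inclusions of cones separately. I would begin by fixing the pairing to be used: since $X$ is compact, integration gives a well-defined perfect pairing of finite-dimensional real vector spaces
\[
V^{1,1}(X,\mathbb{R})\times H^{n-1,n-1}_{\mathrm{BC}}(X,\mathbb{R})\longrightarrow\mathbb{R},\qquad([T],[\Omega])\longmapsto\langle[T],[\Omega]\rangle:=\int_{X}T\wedge\Omega .
\]
It is well defined because a representative of a class in $V^{1,1}$ is only $\partial\bar\partial$-closed while a representative of a class in $H^{n-1,n-1}_{\mathrm{BC}}$ is $d$-closed, and these two constraints are precisely what Stokes' theorem needs to kill the ambiguity on both sides; it is perfect by the standard elliptic argument. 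I would also fix a Hermitian metric $\sigma$, a smooth strictly positive $d$-closed $(n-1,n-1)$-form $\Omega_{0}$ (which exists by \cite{Mic83} since $X$ is balanced) normalised so that $c_{0}\sigma^{n-1}\leqslant\Omega_{0}\leqslant C_{0}\sigma^{n-1}$, and recall from Definition \ref{d21}(4) that, $\mathcal{B}(X)$ being nonempty here, $\overline{\mathcal{B}(X)}$ is genuinely the closure of the open cone $\mathcal{B}(X)$, so that $[\Omega]\in\overline{\mathcal{B}(X)}$ if and only if $[\Omega]+\eta[\Omega_{0}]\in\mathcal{B}(X)$ for every $\eta>0$.

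The inclusion $\overline{\mathcal{B}(X)}\subseteq\mathcal{E}_{\partial\bar\partial}(X)^{\vee}$ I would treat directly. Since it suffices to pair a class $[\Omega]\in\overline{\mathcal{B}(X)}$ against the generators of $\mathcal{E}_{\partial\bar\partial}(X)$, let $T$ be a positive $\partial\bar\partial$-closed $(1,1)$-current. For every $\varepsilon>0$ Definition \ref{d21}(4) furnishes a smooth $d$-closed representative $\Omega_{\varepsilon}\in[\Omega]$ with $\Omega_{\varepsilon}+\varepsilon\sigma^{n-1}\geqslant0$; then $T\wedge(\Omega_{\varepsilon}+\varepsilon\sigma^{n-1})$ is a nonnegative measure, whence
\[
\langle[T],[\Omega]\rangle=\int_{X}T\wedge\Omega_{\varepsilon}\geqslant-\varepsilon\int_{X}T\wedge\sigma^{n-1},
\]
and letting $\varepsilon\to0$ gives $\langle[T],[\Omega]\rangle\geqslant0$. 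As $\mathcal{E}_{\partial\bar\partial}(X)^{\vee}$ is automatically closed, the inclusion follows.

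For the opposite inclusion $\mathcal{E}_{\partial\bar\partial}(X)^{\vee}\subseteq\overline{\mathcal{B}(X)}$, which is the heart of the matter, I would take $[\Omega]\in\mathcal{E}_{\partial\bar\partial}(X)^{\vee}$ with a smooth $d$-closed representative $\Omega$, fix $\eta>0$, and aim to show that the smooth $d$-closed form $R:=\Omega+\eta\Omega_{0}$ admits a representative $R+i\partial\bar\partial\psi>0$ with $\psi$ a smooth $(n-2,n-2)$-form, i.e.\ that $[R]\in\mathcal{B}(X)$; letting $\eta\to0$ then gives $[\Omega]\in\overline{\mathcal{B}(X)}$. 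The mechanism is a Hahn--Banach / Lamari-type duality: one separates $R$ from the convex cone of real $(n-1,n-1)$-currents of the form $S+i\partial\bar\partial\psi$ with $S\geqslant0$. A separating hyperplane is given by a nonzero smooth real $(1,1)$-form $\Phi$ with $\langle R,\Phi\rangle\leqslant0$ and $\langle S+i\partial\bar\partial\psi,\Phi\rangle\geqslant0$ for all admissible $S,\psi$; testing against $\pm i\partial\bar\partial\psi$ forces $\partial\bar\partial\Phi=0$, and testing against positive $S$ forces $\Phi\geqslant0$. Thus $\Phi$ is a nonzero positive $\partial\bar\partial$-closed $(1,1)$-form, so $[\Phi]\in\mathcal{E}_{\partial\bar\partial}(X)$, and $\int_{X}\Phi\wedge\Omega_{0}>0$ because $\Omega_{0}$ is strictly positive and $\Phi\neq0$. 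But then, using $[\Omega]\in\mathcal{E}_{\partial\bar\partial}(X)^{\vee}$,
\[
\langle R,\Phi\rangle=\langle[\Phi],[\Omega]\rangle+\eta\,\langle[\Phi],[\Omega_{0}]\rangle\geqslant\eta\int_{X}\Phi\wedge\Omega_{0}>0,
\]
contradicting $\langle R,\Phi\rangle\leqslant0$. Hence no separating hyperplane exists and $[R]\in\mathcal{B}(X)$. Observe that the decisive feature here is the matching of the $\partial\bar\partial$-closedness forced on $\Phi$ with the $\partial\bar\partial$-closedness built into $\mathcal{E}_{\partial\bar\partial}(X)$: this is exactly what makes the two cones dual to each other.

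The step I expect to be the main obstacle is the precise justification of this Hahn--Banach argument, and this is exactly where compactness of $X$ and the balancedness hypothesis enter. First, one needs the cone of currents $S+i\partial\bar\partial\psi$ with $S\geqslant0$ to be weakly closed: this follows from the uniform mass bound $\|S\|_{\sigma}\leqslant c_{0}^{-1}\int_{X}S\wedge\Omega_{0}=c_{0}^{-1}\int_{X}(S+i\partial\bar\partial\psi)\wedge\Omega_{0}$, the last equality holding because $\Omega_{0}$ is $d$-closed, together with the closedness of the image of $i\partial\bar\partial$ acting on currents, itself a consequence of $\dim V^{n-1,n-1}(X,\mathbb{R})<\infty$. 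Second, one must pass from a current-level positive representative to the smooth $\varepsilon$-nearly-positive (and then, after absorbing a further small multiple of the interior class $[\Omega_{0}]$, strictly positive) representatives required by Definition \ref{d21}(4); this is a regularisation argument on the model of Lamari's lemma. Both points are carried out in \cite{FX14} (Lemma 3.3 together with Remark 3.4), which I would simply invoke; everything else in the proof is the bookkeeping of the duality pairing.
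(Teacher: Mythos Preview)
Your proposal is correct, but note that the paper does not actually prove this lemma: it is stated with an explicit attribution ``it is proved in \cite{FX14} (Lemma 3.3 \& Remark 3.4)'' and no argument is given. What you have written is a faithful sketch of the Hahn--Banach/Lamari-type argument that underlies the result in \cite{FX14}, and you correctly identify the two nontrivial technical inputs (weak closedness of the cone of positive currents modulo $i\partial\bar\partial$-exact forms, and the passage from a current-level positive representative to a smooth one) and defer them to the same reference. In that sense your approach coincides with the paper's, only you have unpacked the citation. The one point worth flagging is that your Hahn--Banach separation, as phrased, takes place in an infinite-dimensional space of currents and yields a positive current representative of $[R]$, whereas membership in $\mathcal{B}(X)$ requires a \emph{smooth strictly positive} representative; you acknowledge this gap and invoke \cite{FX14} for the regularisation, which is appropriate, but an alternative is to run the separation directly in the finite-dimensional space $H^{n-1,n-1}_{\mathrm{BC}}(X,\mathbb{R})$ against the open cone $\mathcal{B}(X)$ and then identify the separating functional via the Aeppli--Bott--Chern pairing, which sidesteps the regularisation step entirely.
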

Then combining the above lemma with a celebrated description of the existence of balanced metric \cite{Mic83}, we reformulate it as 
 
\begin{proposition}\label{p21}
Let $X$ be a compact balanced manifold of dimension $n$, and let $[\Omega]$ be a real $(n-1,n-1)$-class on $X$. Then the following two statements are equivalent:
\begin{enumerate}
\item[(a)] $[\Omega]$ is a balanced class;
\item[(b)] for any positive $\partial\bar{\partial}$-closed $(1,1)$-current $T$, $\int_{X}[\Omega]\wedge T\geqslant0$ and $\int_{X}[\Omega]\wedge T=0$ if and only if $T=0$.
\end{enumerate}
\begin{proof}
(b)$\Rightarrow$(a) is nothing but \cite{FX14}, Lemma 3.3.

(a)$\Rightarrow$(b) is due to \cite{Mic83}, Proposition 4.2. 
\end{proof}
\end{proposition}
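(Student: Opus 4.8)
The plan is to prove the two implications separately: (a)$\Rightarrow$(b) is an elementary pointwise computation, while (b)$\Rightarrow$(a) is a convex-duality refinement of Lemma~\ref{l21}. First I would record that, $X$ being compact, for a $d$-closed smooth $(n-1,n-1)$-form $\Omega$ and a positive $\partial\bar{\partial}$-closed $(1,1)$-current $T$ the number $\int_{X}\Omega\wedge T$ depends only on the Bott--Chern class $[\Omega]\in H^{n-1,n-1}_{\textrm{BC}}(X,\mathbb{R})$ and on the Aeppli class of $T$ in $V^{1,1}(X,\mathbb{R})$: replacing $\Omega$ by $\Omega+i\partial\bar{\partial}\psi$ changes the integral by $\pm\int_{X}i\psi\wedge\partial\bar{\partial}T=0$, and replacing $T$ by $T+\partial a+\bar{\partial}b$ changes it by $\pm\int_{X}(\partial\Omega\wedge a+\bar{\partial}\Omega\wedge b)=0$ since $d\Omega=0$. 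Thus (b) is a well-posed condition on $[\Omega]$, tested against the generating rays of the cone $\mathcal{E}_{\partial\bar{\partial}}(X)$.

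For (a)$\Rightarrow$(b): choose a smooth, strictly positive, $d$-closed representative $\Omega$ of the balanced class. Pointwise, a strictly positive $(n-1,n-1)$-form wedged with a positive $(1,1)$-form is a volume form of strictly positive density, vanishing only where the $(1,1)$-form vanishes; writing a positive $(1,1)$-current locally as a nonnegative combination of terms $i\theta\wedge\bar{\theta}$, it follows that $\Omega\wedge T$ is a nonnegative Radon measure, so $\int_{X}\Omega\wedge T\geqslant0$, and if the integral vanishes then the measure vanishes, whence $T=0$ by strict positivity of $\Omega$. (This is essentially \cite{Mic83}, Proposition 4.2.)

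For (b)$\Rightarrow$(a): the nonnegativity clause in (b) says that $[\Omega]$ pairs nonnegatively with every positive $\partial\bar{\partial}$-closed $(1,1)$-current, hence with every element of $\mathcal{E}_{\partial\bar{\partial}}(X)$, so $[\Omega]\in\mathcal{E}_{\partial\bar{\partial}}(X)^{\vee}=\overline{\mathcal{B}(X)}$ by Lemma~\ref{l21}. Since $X$ is balanced, $\overline{\mathcal{B}(X)}$ is genuinely the closure of the open cone $\mathcal{B}(X)$, so the only thing left is to rule out $[\Omega]\in\partial\overline{\mathcal{B}(X)}$. If it were a boundary point, the supporting-hyperplane theorem in the finite-dimensional space $H^{n-1,n-1}_{\textrm{BC}}(X,\mathbb{R})$ would produce a nonzero $c\in V^{1,1}(X,\mathbb{R})$ with $c\geqslant0$ on $\overline{\mathcal{B}(X)}$ and $c([\Omega])=0$; then $c\in\overline{\mathcal{B}(X)}^{\vee}=(\mathcal{E}_{\partial\bar{\partial}}(X)^{\vee})^{\vee}$, which by the bipolar theorem equals the closure of $\mathcal{E}_{\partial\bar{\partial}}(X)$. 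But $\mathcal{E}_{\partial\bar{\partial}}(X)$ is already closed: pairing with a fixed balanced class is a strictly positive functional on it, which bounds the mass of positive $\partial\bar{\partial}$-closed currents in a given Aeppli class and hence gives the cone a weak-$*$ compact base. Therefore $c$ is a finite nonnegative combination of classes of positive $\partial\bar{\partial}$-closed currents, so $c=[T]$ for a nonzero positive $\partial\bar{\partial}$-closed $(1,1)$-current $T$, and then $\int_{X}[\Omega]\wedge T=c([\Omega])=0$ with $T\neq0$, contradicting (b). This reproduces \cite{FX14}, Lemma 3.3.

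I expect the last step of (b)$\Rightarrow$(a) to be the main obstacle: turning ``boundary point of $\overline{\mathcal{B}(X)}$'' into an honest nonzero positive $\partial\bar{\partial}$-closed current annihilating $[\Omega]$. This rests on the closedness of the cone $\mathcal{E}_{\partial\bar{\partial}}(X)$ --- precisely where the balanced hypothesis is used, via the uniform mass bound and weak-$*$ compactness --- and on the fact that members of that cone are represented by genuine positive currents rather than merely formal nonnegative combinations. Everything else is soft finite-dimensional convex geometry together with the pointwise positivity already used in (a)$\Rightarrow$(b).
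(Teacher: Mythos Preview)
Your proof is correct and follows the same strategy as the paper, which simply cites \cite{Mic83}, Proposition~4.2 for (a)$\Rightarrow$(b) and \cite{FX14}, Lemma~3.3 for (b)$\Rightarrow$(a); you have effectively reconstructed the content of those two references. The pointwise positivity argument for (a)$\Rightarrow$(b) is exactly Michelsohn's, and your convex-duality argument for (b)$\Rightarrow$(a)---passing through Lemma~\ref{l21}, then using a supporting hyperplane together with the closedness of $\mathcal{E}_{\partial\bar{\partial}}(X)$ (ensured by the balanced hypothesis via a mass bound and weak-$*$ compactness)---is precisely the mechanism behind \cite{FX14}, Lemma~3.3.
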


\begin{rem}
If $V^{1,1}(X,\mathbb{R})=H^{1,1}_{\textrm{BC}}(X,\mathbb{R})$, for example, when $X$ is a K\"{a}hler manifold, Lemma \ref{l21} is reformulated as
\begin{equation}\label{e21}
\mathcal{E}(X)^{\vee}=\overline{\mathcal{B}(X)}.
\end{equation}
\end{rem}

\subsection{The hyperbolicity}
\label{sec:hyper}
With the preparations above, we are able to define various hyperbolicities mentioned in Introduction.

\begin{definition}\label{d22}
For $k=1,...,n$, we say that a Hermitian metric $\omega$ is balanced $k$-hyperbolic, if it is balanced and $\omega^{k}$ is $\tilde{d}$-bounded. We say that a real smooth $(1,1)$-form $\omega$ is semi-balanced $k$-hyperbolic, if $\omega^{n-1}$ is $d$-closed, non-negative and strictly positive on a Zariski open set and $\omega^{k}$ is $\tilde{d}$-bounded. Fix a positive integer $k\mid (n-1)$, say $kt=n-1$. We say that a real smooth $(k,k)$-form $\beta$ is weakly balanced $k$-hyperbolic, if $\beta$ is $\tilde{d}$-bounded, and $[\beta^t]$ is nef and big.

Let $X$ be a compact balanced manifold. We say $X$ is a (weakly or semi-) balanced $k$-hyperbolic manifold, if there exists a (weakly or semi-) balanced $k$-hyperbolic form on it. 
\end{definition}

Remember that a Hermitian metric $\omega$ is K\"{a}hler hyperbolic \cite{Gro91} if it is K\"{a}hler and $\tilde{d}$-bounded; whereas a real smooth $(1,1)$-form $\omega$ is semi-K\"{a}hler hyperbolic \cite{Kol95}, if $\omega$ is $d$-closed, non-negative, strictly positive on a Zariski open set and $\tilde{d}$-bounded; whereas a real smooth $(1,1)$-form $\omega$ is weakly K\"{a}hler hyperbolic \cite{BDET24}, if it is $d$-closed, $\tilde{d}$-bounded, and $[\omega]$ is nef and big. 

We can also generalize these notions to the $k$-hyperbolicity.
\begin{definition}\label{d23}
For $k=1,...,n$, we say that a Hermitian metric $\omega$ is K\"{a}hler $k$-hyperbolic, if it is K\"{a}hler and $\omega^{k}$ is $\tilde{d}$-bounded. We say that a real smooth $(1,1)$-form $\omega$ is semi-K\"{a}hler $k$-hyperbolic, if $\omega$ is $d$-closed, non-negative and strictly positive on a Zariski open set and $\omega^{k}$ is $\tilde{d}$-bounded. We say that a real smooth $(1,1)$-form $\omega$ is weakly K\"{a}hler $k$-hyperbolic, if $\omega$ is $d$-closed, $[\omega]$ is nef and big, and $\omega^{k}$ is $\tilde{d}$-bounded.

Let $X$ be a compact K\"{a}hler manifold. We say that $X$ is a (weakly or semi-) K\"{a}hler $k$-hyperbolic manifold, if there exists a (weakly or semi-) K\"{a}hler $k$-hyperbolic form on it.
\end{definition}

Next we make a discussion about the relationship among these hyperbolicities. Clearly, Gromov's (weakly or semi-)K\"{a}hler hyperbolicity is nothing but (weakly or semi-)K\"{a}hler $1$-hyperbolicity in our language. For a balanced $k$-hyperbolic metric $\omega$, $\pi^{\ast}d\omega^{k}=d\pi^{\ast}\omega^{k}=0$ since $\pi^{\ast}\omega^{k}$ is $d$-exact. However, it implies that $d\omega^{k}=0$ because $\tilde{X}\rightarrow X$ is locally biholomorphic. Moreover, when $k<n-1$, $d\omega^{k}=0$ indicates that $d\omega=0$ by standard multi-linear algebra. All in all, we conclude that a balanced $k$-hyperbolic manifold with $k<n-1$ must be K\"{a}hler $k$-hyperbolic. It also happens when $\omega$ is semi-balanced. We first obtain $d\omega=0$ on the Zariski open set where $\omega$ is strictly positive, then deduce that $d\omega=0$ on the whole space by smoothness. Hence the (semi-)balanced $k$-hyperbolicity only makes sense when $k=n-1$, in which case we simply call it {\it (semi-)balanced hyperbolicity}. Clearly this argument fails when $\omega$ further degenerates.   

By definition, a K\"{a}hler $k$-hyperbolic manifold must be both semi-K\"{a}hler $k$-hyperbolic and balanced $k$-hyperbolic with $k=1,...,n$. A semi-K\"{a}hler $k$-hyperbolic manifold is clearly both weakly K\"{a}hler $k$-hyperbolic and semi-balanced $k$-hyperbolic. A balanced hyperbolic manifold must be semi-balanced hyperbolic. Also we have 

\begin{lemma}\label{l22}
A weakly K\"{a}hler $k$-hyperbolic manifold must be weakly balanced $k$-hyperbolic for $k\mid (n-1)$.
\begin{proof}
Let us recall two crucial facts in \cite{LX16}. Let $[\alpha]\in\mathcal{N}(X)$, and define the volume of $[\alpha]$ to be
\[
\widehat{\textrm{vol}}([\alpha])=\inf_{[A] \textrm{ big and nef (1,1)-class}}\big(\frac{[A]\cdot[\alpha]}{\textrm{vol}([A])^{1/n}}\big)^{\frac{n}{n-1}}.
\]
Then \cite{LX16} indicates that
\begin{theorem}[(c.f. \cite{LX16}, Theorem 5.2)]\label{t21}

\begin{enumerate}
\item[(1)] $\widehat{\textrm{vol}}$ is positive precisely for the big classes.
\item[(2)] For any big and nef $(1,1)$-class $[A]$, we have $\widehat{\textrm{vol}}([A^{n-1}])=\textrm{vol}([A])$.
\end{enumerate}
\end{theorem}
Although \cite{LX16} is formulated for projective manifolds, it is also carefully explained in \cite{LX16}, Sect. 2.4 that everything extends smoothly to the K\"{a}hler case. 

Now if $X$ is a weakly K\"{a}hler $k$-hyperbolic manifold, then there exists a nef and big $(1,1)$-class $[\omega]$ such that $\omega^{k}$ is $\tilde{d}$-bounded. So $\widehat{\textrm{vol}}([\omega^{n-1}])=\textrm{vol}([\omega])>0$. The inequality is due to \cite{DP04}, Theorem 0.5. Therefore $[\omega^{n-1}]$ is a nef and big $(n-1,n-1)$-class. It exactly implies that $X$ is weakly balanced $k$-hyperbolic.
\end{proof}
\end{lemma}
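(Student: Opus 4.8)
The plan is to show that the same form which already witnesses the weak K\"ahler $k$-hyperbolicity also witnesses the weak balanced $k$-hyperbolicity, so that no new construction is needed. By Definition \ref{d23} there is a real smooth $d$-closed $(1,1)$-form $\omega$ with $[\omega]$ nef and big and with $\omega^{k}$ $\tilde{d}$-bounded; write $kt=n-1$ and put $\beta:=\omega^{k}$. Then $\beta$ is a real smooth $(k,k)$-form, it is $\tilde{d}$-bounded by hypothesis, and $\beta^{t}=\omega^{n-1}$ is $d$-closed, so it defines a class in $H^{n-1,n-1}_{\textrm{BC}}(X,\mathbb{R})$. In view of Definition \ref{d22}, the whole statement thus reduces to checking that $[\omega^{n-1}]$ is a nef and big $(n-1,n-1)$-class on $X$.

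Nefness I would handle first, as it is the formal half. Since $X$ is K\"ahler, $\mathcal{B}(X)\neq\emptyset$ and $\overline{\mathcal{B}(X)}$ is genuinely the topological closure of $\mathcal{B}(X)$, and likewise a nef $(1,1)$-class is a limit of K\"ahler classes. So, fixing a K\"ahler form $\sigma$, I can choose K\"ahler metrics $\omega_{\varepsilon}\in[\omega]+\varepsilon[\sigma]$; then $[\omega^{n-1}]=\lim_{\varepsilon\to0}[\omega_{\varepsilon}^{\,n-1}]$ with each $[\omega_{\varepsilon}^{\,n-1}]\in\mathcal{B}(X)$, whence $[\omega^{n-1}]\in\overline{\mathcal{B}(X)}$. (Alternatively one can pair the non-negative form $\omega_{\varepsilon}^{\,n-1}$ against an arbitrary positive $\partial\bar\partial$-closed $(1,1)$-current and let $\varepsilon\to0$, invoking the duality $\overline{\mathcal{B}(X)}=\mathcal{E}(X)^{\vee}$ of \eqref{e21}, which is valid because $X$ is K\"ahler.)

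Bigness is the substantive point, and this is the step I expect to be the main obstacle, since — unlike nefness — it is not formal and really needs transcendental input. Here the plan is to use the volume functional $\widehat{\textrm{vol}}$ on the pseudo-effective curve cone together with the Lehmann--Xiao theory recalled in Theorem \ref{t21}. Applying the identity $\widehat{\textrm{vol}}([A^{n-1}])=\textrm{vol}([A])$ to the big and nef class $[A]=[\omega]$ gives $\widehat{\textrm{vol}}([\omega^{n-1}])=\textrm{vol}([\omega])$, and $\textrm{vol}([\omega])>0$ because a nef class is big exactly when its volume is positive, by the transcendental holomorphic Morse inequality of \cite{DP04}. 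Then part (1) of Theorem \ref{t21}, namely that $\widehat{\textrm{vol}}$ is positive precisely on big curve classes, forces $[\omega^{n-1}]$ to be big. The one subtlety to keep an eye on is that \cite{LX16} is written for projective manifolds, so I would invoke their Section 2.4, which explains that the construction of $\widehat{\textrm{vol}}$ and both assertions of Theorem \ref{t21} carry over to the K\"ahler setting. Putting the two halves together, $\beta=\omega^{k}$ is weakly balanced $k$-hyperbolic — with $\tilde{d}$-boundedness inherited verbatim, the form being literally unchanged — so $X$ is a weakly balanced $k$-hyperbolic manifold.
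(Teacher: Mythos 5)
Your proposal is correct and follows essentially the same route as the paper: bigness of $[\omega^{n-1}]$ via the Lehmann--Xiao identity $\widehat{\textrm{vol}}([\omega^{n-1}])=\textrm{vol}([\omega])$ together with the Demailly--P\u{a}un criterion $\textrm{vol}([\omega])>0$, with the same caveat about transferring \cite{LX16} from the projective to the K\"ahler setting. The only difference is that you spell out the nefness of $[\omega^{n-1}]$ (by approximating $[\omega]$ with the K\"ahler classes $[\omega]+\varepsilon[\sigma]$, or by the duality \eqref{e21}), a point the paper asserts without comment.
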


For a balanced metric $\omega$, $\omega^{n-1}$ is certainly positive hence $[\omega^{n-1}]$ is nef. It means that $\mathcal{B}(X)$ is an open subcone of $\mathcal{N}(X)$, so $\mathcal{B}(X)\subseteq\mathcal{N}(X)^{\circ}$. Therefore $[\omega^{n-1}]$ is also big. It means that a balanced hyperbolic metric must be weakly balanced hyperbolic. 

In the end, let $\pi:\tilde{X}\rightarrow X$ be the universal cover, and fix a Riemannnian metric $g$ on $X$. If $\omega^{k}$ is $\tilde{d}$-bounded, then there exists a $(2k-1)$-form $\eta$ on $\tilde{X}$ such that $\pi^{\ast}\omega^{k}=d\eta$ and $\eta$ is bounded with respect to $\pi^{\ast}g$. Then for any positive integer $t$,
\[
\pi^{\ast}\omega^{kt}=d\eta\wedge\pi^{\ast}\omega^{k(t-1)}=d(\eta\wedge\pi^{\ast}\omega^{k(t-1)}).
\] 
Since $X$ is compact, $\omega$ is bounded with respect to $g$. Hence $\pi^{\ast}\omega^{k(t-1)}$ as well as $\eta\wedge\pi^{\ast}\omega^{k(t-1)}$ is bounded with respect to $\pi^{\ast}g$. It exactly means that the $k$-hyperbolicity implies the $kt$-hyperbolicity for any positive integers $k$ and $t$.

We summarize the relationship among these notions as follows: for any positive integers $k$ and $t$, we have 
\[
k\textrm{-hyperbolicity}\Rightarrow kt\textrm{-hyperbolicity}
\]
and for any positive integer $k$, we have
\[
\xymatrix{
\textrm{K\"{a}hler }k\textrm{-hyperbolicity} \ar@{=>}[d] \ar@<.5ex>[r] & \textrm{balanced }k\textrm{-hyperbolicity} \ar@<.5ex>[l]^{k<n-1} \ar@{=>}[d]  \ar@{=>}@<14.5ex>@/^/[dd]        \\
\textrm{semi-K\"{a}hler }k\textrm{-hyperbolicity} \ar@{=>}[d] \ar@<.5ex>[r]  & \textrm{semi-balanced }k\textrm{-hyperbolicity} \ar@<.5ex>[l]^{k<n-1}     \\ 
\textrm{weakly K\"{a}hler }k\textrm{-hyperbolicity} \ar@{=>}[r]^{k\mid (n-1)}  & \textrm{weakly balanced }k\textrm{-hyperbolicity}       
}
\]
It is worthwhile to point out that there exist weakly K\"{a}hler 1-hyperbolic manifolds which are not K\"{a}hler 1-hyperbolic manifolds provided by \cite{BCDT24,BDET24}. Moreover, the connected sums 
\[
\#_{k}(S^{3}\times S^{3})\textrm{ with }k\geqslant2
\] 
are balanced manifolds as is shown in \cite{FLY12}. A direct computation implies that 
\[
H^{4}_{\textrm{dR}}(\#_{k}(S^{3}\times S^{3}),\mathbb{R})=0.
\] 
So they cannot be compact K\"{a}hler manifolds. On the other hand, any balanced metric $\omega$ gives a zero class $[\omega^{2}]$. It means that $\omega$ is balanced hyperbolic. Therefore there exist non-K\"{a}hler (weakly or semi-)balanced $2$-hyperbolic manifolds, and the horizontal implications in the diagram are strict except the mentioned cases. We are willing to know more information about the vertical inclusions. For example, must a semi-balanced hyperbolic manifold be weakly balanced $(n-1)$-hyperbolic?

We also  have the following interesting description for the fundamental group of a weakly balanced $k$-hyperbolic manifold, though it is not really involved in our main theorems.
\begin{proposition}\label{p22}
Let $X$ be a compact K\"{a}hler manifold. If $k\mid (n-1)$, and $X$ is weakly balanced $k$-hyperbolic, then $\pi_{1}(X)$ is not amenable.
\begin{proof}
Let $\pi:\tilde{X}\rightarrow X$ be the universal cover. Denote by $H^{i}_{\beta}(\tilde{X})$ the de Rham cohomology based on differential forms $\alpha$ such that $\alpha$ and $d\alpha$ are uniformly bounded. Now assume that $\pi_{1}(X)$ is amenable. Due to \cite{ABW92}, the pull-back homomorphism $H^{i}(X)\rightarrow H^{i}_{\beta}(\tilde{X})$ is injective. But this is impossible. 

In fact, the pull-back of the weakly balanced $k$-hyperbolic class $[\gamma]$ is zero in $H^{2k}_{\beta}(\tilde{X})$. By hypothesis $k\mid(n-1)$, say $kt=n-1$. So $\pi^{\ast}[\gamma^{t}]$ is also a zero class in $H^{2n-2}_{\beta}(\tilde{X})$. However, $[\gamma^{t}]$ is big hence never a zero class in $H^{2n-2}(X)$. Otherwise, $[A]\cdot[\gamma^{t}]=0$ for any $(1,1)$-class $[A]$. Hence $\widehat{\textrm{vol}}([\gamma^{t}])=0$, which leads to a contradiction to Theorem \ref{t21}. Therefore, $\pi_{1}(X)$ is not amenable.
\end{proof}
\end{proposition}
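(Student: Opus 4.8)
The plan is to argue by contradiction, using the Brooks-type result of \cite{ABW92} that injects de Rham cohomology into the $L^\infty$-bounded cohomology of the universal cover when the fundamental group is amenable. The overall strategy is purely cohomological: produce a nonzero degree-$2k$ class on $X$ whose pullback to $\tilde X$ vanishes in $H^{2k}_\beta(\tilde X)$, contradicting injectivity. First I would assume $\pi_1(X)$ is amenable and invoke \cite{ABW92} to get the injection $H^{2k}(X)\to H^{2k}_\beta(\tilde X)$, where $H^\bullet_\beta$ denotes the cohomology of forms that are bounded together with their differential. Then I would take a weakly balanced $k$-hyperbolic form $\gamma$, so $[\gamma]\in H^{2k}(X)$ has a $\tilde d$-bounded representative: $\pi^*\gamma=d\beta$ with $\beta$ bounded with respect to $\pi^*g$. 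Since $\gamma$ is smooth on compact $X$, $\pi^*\gamma$ and $d\pi^*\gamma=0$ are automatically bounded, so $\pi^*\gamma$ represents $0$ in $H^{2k}_\beta(\tilde X)$; by injectivity this forces $[\gamma]=0$ in $H^{2k}(X)$.

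Next I would derive the contradiction from the nef-and-big hypothesis. Writing $kt=n-1$, from $[\gamma]=0$ in $H^{2k}(X)$ we get $[\gamma^t]=0$ in $H^{2n-2}(X)$ — or more directly, pairing against any $(1,1)$-class $[A]$ gives $[A]\cdot[\gamma^t]=0$. Then the volume $\widehat{\textrm{vol}}([\gamma^t])$, being an infimum over big and nef $(1,1)$-classes $[A]$ of $\big([A]\cdot[\gamma^t]/\textrm{vol}([A])^{1/n}\big)^{n/(n-1)}$, must be $0$. But $[\gamma^t]$ is big by hypothesis, so Theorem \ref{t21}(1) says $\widehat{\textrm{vol}}([\gamma^t])>0$, a contradiction. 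Hence $\pi_1(X)$ is not amenable.

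The main obstacle is the bookkeeping around which cohomology theory \cite{ABW92} actually applies to and making sure the degrees match: one must check that a $\tilde d$-bounded smooth form on a compact manifold genuinely gives a class that is zero in the \emph{bounded} de Rham cohomology $H^\bullet_\beta$ (both the form and its primitive bounded, and the differential of the primitive, namely $\pi^*\gamma$, bounded), and that the relevant pullback map $H^{2k}(X)\to H^{2k}_\beta(\tilde X)$ is the one proved injective under amenability. Everything else is formal: the passage from $[\gamma]=0$ to $\widehat{\textrm{vol}}([\gamma^t])=0$ uses only multilinearity of the intersection pairing, and the final contradiction is immediate from Theorem \ref{t21}. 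No delicate analysis or geometry beyond what is already recorded in the excerpt is needed.
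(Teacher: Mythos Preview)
Your proposal is correct and follows essentially the same approach as the paper: assume amenability, invoke \cite{ABW92} for injectivity of $H^\bullet(X)\to H^\bullet_\beta(\tilde X)$, use $\tilde d$-boundedness to kill the pulled-back class, and derive a contradiction from bigness via $\widehat{\mathrm{vol}}$ and Theorem~\ref{t21}. The only cosmetic difference is the order of operations: you apply injectivity in degree $2k$ to conclude $[\gamma]=0$ on $X$ and then take the $t$-th power downstairs, whereas the paper first takes the power on $\tilde X$ (noting $\pi^\ast\gamma^t=d(\beta\wedge\pi^\ast\gamma^{t-1})$ is bounded-exact) and then applies injectivity in degree $2n-2$; both routes are equally valid and lead to the same contradiction.
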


In the end of this section, we provide the following property that is frequently used when verifying the $\tilde{d}$-boundedness.
\begin{lemma}\label{l23}
Let $f:\hat{X}\rightarrow X$ be a smooth modification between compact complex manifolds. Fix Riemannian metrics $\hat{g}_{X}$ and $g_{X}$ on $\hat{X}$ and $X$ respectively, and suppose that $\omega$ is a $\tilde{d}$-bounded $k$-form on $X$. Let $\xi$ (resp. $\theta$) be an arbitrary smooth $(k-1)$-form (resp. $(k-2)$-form) on $X$. Then $f^{\ast}\omega$, $\omega+d\xi$ and $\omega+\partial\bar{\partial}\theta$ are all $\tilde{d}$-bounded.
\begin{proof}
Let $\pi:\tilde{X}\rightarrow X$ be the universal cover. Since $f$ is birational,
\[
f_{\ast}:\pi_{1}(\hat{X})\rightarrow\pi_{1}(X)
\]
is an isomorphism (see \cite{BP21}, Proposition 2.3). Therefore $\chi:Z:=f^{\ast}\tilde{X}\rightarrow\hat{X}$ gives the universal cover of $\hat{X}$. Denote by $h$ the natural morphism $Z\rightarrow\tilde{X}$, and we have the following commutative diagram.

\begin{equation*}
\xymatrix@C=2pc@R=2pc{
Z\ar[r]^{h}\ar[d]^{\chi}& \tilde{X}\ar[d]^{\pi}\\
\hat{X}\ar[r]^{f} & X
}
\end{equation*}
Since $\omega$ is $\tilde{d}$-bounded, there exists a bounded $(k-1)$-form $\eta$ on $\tilde{X}$ such that $\pi^{\ast}\omega=d\eta$. Now by \cite{BDET24}, Lemma 2.28, $h^{\ast}\eta$ is also bounded with respect to $h^{\ast}\hat{g}_{X}$. Clearly $\chi^{\ast}f^{\ast}\omega=dh^{\ast}\eta$, so $f^{\ast}\omega$ is $\tilde{d}$-bounded. 

The $\tilde{d}$-boundedness of $\omega+d\xi$ and $\omega+\partial\bar{\partial}\theta$ are much easier. Since $X$ is compact, $\xi$ and $\bar{\partial}\theta$ are bounded with respect to $g_{X}$. Then $\eta+\pi^{\ast}\xi$ and $\eta+\pi^{\ast}\bar{\partial}\theta$ are also bounded with respect to $\pi^{\ast}g_{X}$. Note $\pi^{\ast}(\omega+d\xi)=d(\eta+\pi^{\ast}\xi)$ and $\pi^{\ast}(\omega+\partial\bar{\partial}\theta)=d(\eta+\pi^{\ast}\bar{\partial}\theta)$. Thus $\omega+d\xi$ and $\omega+\partial\bar{\partial}\theta$ are $\tilde{d}$-bounded. 
\end{proof}   
\end{lemma}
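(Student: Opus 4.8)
The plan is to separate the three assertions, since the pullback $f^{\ast}\omega$ requires comparing the universal covers of $\hat X$ and $X$, whereas the two perturbations $\omega+d\xi$ and $\omega+\partial\bar\partial\theta$ are cohomologically trivial modifications that I expect to handle by the single observation that a smooth form on a compact manifold is bounded. Throughout I write $\pi:\tilde X\to X$ for the universal cover and fix a bounded $(k-1)$-form $\eta$ on $\tilde X$ with $\pi^{\ast}\omega=d\eta$, which exists because $\omega$ is $\tilde d$-bounded.

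First I would dispose of the perturbations. Since $X$ is compact, the smooth forms $\xi$ and $\bar\partial\theta$ are bounded with respect to $g_X$, so their pullbacks $\pi^{\ast}\xi$ and $\pi^{\ast}\bar\partial\theta$ are bounded with respect to $\pi^{\ast}g_X$. From $\pi^{\ast}(\omega+d\xi)=d(\eta+\pi^{\ast}\xi)$ one reads off that $\eta+\pi^{\ast}\xi$ is a bounded primitive, so $\omega+d\xi$ is $\tilde d$-bounded. For the third form I would use that $\partial\bar\partial\theta=d\bar\partial\theta$ (as $\bar\partial\bar\partial\theta=0$), whence $\pi^{\ast}(\omega+\partial\bar\partial\theta)=d(\eta+\pi^{\ast}\bar\partial\theta)$ with $\eta+\pi^{\ast}\bar\partial\theta$ again bounded. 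Both are routine.

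The substance is the pullback $f^{\ast}\omega$, and here the first step is to identify the universal cover of $\hat X$. Because $f$ is bimeromorphic, the induced map $f_{\ast}:\pi_1(\hat X)\to\pi_1(X)$ is an isomorphism (this is the topological input, \cite{BP21}, Proposition 2.3). A standard covering-space argument then shows that the fiber product $Z:=f^{\ast}\tilde X=\hat X\times_X\tilde X$, with its projection $\chi:Z\to\hat X$, is the universal cover of $\hat X$; writing $h:Z\to\tilde X$ for the other projection gives the commutative square $\pi\circ h=f\circ\chi$. Pulling back $\pi^{\ast}\omega=d\eta$ along $h$ yields $\chi^{\ast}f^{\ast}\omega=h^{\ast}\pi^{\ast}\omega=d(h^{\ast}\eta)$, so the whole claim reduces to showing that $h^{\ast}\eta$ is bounded with respect to the lifted metric $\chi^{\ast}\hat g_X$.

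That boundedness is the only delicate point, and I expect it to be the main obstacle. The naive transport of the bound on $\eta$ fails: the pullback metric $\chi^{\ast}f^{\ast}g_X=h^{\ast}\pi^{\ast}g_X$ degenerates along the exceptional locus of $f$, so one cannot simply say $h^{\ast}\eta$ is bounded because it is measured against a larger metric. The resolution is that a holomorphic map between compact manifolds has differential of bounded operator norm, $\|df\|_{\hat g_X\to g_X}\leqslant C$; degeneration of $f^{\ast}g_X$ only \emph{shrinks} lengths, so pullback can only shrink a form, giving a pointwise estimate of the shape $\|f^{\ast}\eta\|_{\hat g_X}\leqslant C^{k-1}\|\eta\|_{g_X}$. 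Applied to the equivariant lift $h$, this is exactly the statement invoked as \cite{BDET24}, Lemma 2.28. Once $h^{\ast}\eta$ is bounded with respect to $\chi^{\ast}\hat g_X$, the identity $\chi^{\ast}f^{\ast}\omega=d(h^{\ast}\eta)$ shows $f^{\ast}\omega$ is $\tilde d$-bounded, completing the proof.
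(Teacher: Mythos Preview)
Your proof is correct and follows essentially the same route as the paper: identify the universal cover of $\hat X$ as the fiber product $Z=f^{\ast}\tilde X$ via the $\pi_1$-isomorphism from \cite{BP21}, reduce to the boundedness of $h^{\ast}\eta$ and invoke \cite{BDET24}, Lemma 2.28, and handle the two perturbations by the trivial observation that smooth forms on a compact manifold are bounded. The only differences are cosmetic---you treat the easy cases first and spell out more explicitly why the boundedness of $h^{\ast}\eta$ is the nontrivial point.
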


\subsection{Pull-back and push-forward}
In this section we list several formulas concerning the pull-backs and push-forwards involved in a birational transform, which are frequently applied in the later part without specifying. They are quite standard to experts, but we would like to provide a simple proof here for readers' benefits.

Let $f:\hat{X}\dashrightarrow X$ be a birational transform between compact complex manifolds of dimension $n$ with the exceptional set $E$ and the center $Y$. Let 
\[
\Gamma_{f}:=\{(x,f(x))\in\hat{X}\times X\mid x\in\hat{X}\setminus E\},
\] 
and let $\mu:W\rightarrow\bar{\Gamma}_{f}$ be the desingularization of the closure $\bar{\Gamma}_{f}$. Let $q_{1}:\bar{\Gamma}_{f}\rightarrow\hat{X}$ and $q_{2}:\bar{\Gamma}_{f}\rightarrow X$ be the natural projections. Let $p_{1}:=q_{1}\circ\mu$ and $p_{2}:=q_{2}\circ\mu$. Denote by $V$ the exceptional divisor in $W$ such that $p_{1}(V)\subseteq E$ and $p_{2}(V)\subseteq Y$, and denote $l_{1}=q_{1}|_{\Gamma_{f}}$ and $l_{2}=q_{2}|_{\Gamma_{f}}$. 

Note $\Gamma_{f}$ is an open manifold, the pull-backs and push-forwards induced by $l_1:\Gamma_f\rightarrow\hat{X}$ and $l_2:\Gamma_f\rightarrow X$ are well-defined. We have

\begin{lemma}\label{l24}
Let $\alpha$ (resp. $\beta$) be a smooth $k$-form on $X$ (resp. $\hat{X}$). Then
\[
(p_{1})_{\ast}(p_{2})^{\ast}\alpha=(l_{1})_{\ast}(l_{2})^{\ast}\alpha\quad\textrm{and}\quad(p_{2})_{\ast}(p_{1})^{\ast}\beta=(l_{2})_{\ast}(l_{1})^{\ast}\beta.
\]
\begin{proof}
Let $\chi$ be a test form on $\hat{X}$. Then
\begin{equation}\label{e22}
\begin{split}
\int_{\hat{X}}(p_{1})_{\ast}(p_{2})^{\ast}\alpha\wedge\chi&=\int_{W}(p_{2})^{\ast}\alpha\wedge(p_{1})^{\ast}\chi \\
&=\int_{W\setminus V}(p_{2})^{\ast}\alpha\wedge(p_{1})^{\ast}\chi \\
&=\int_{\Gamma_{f}}(l_{2})^{\ast}\alpha\wedge(l_{1})^{\ast}\chi \\
&=\int_{\hat{X}}(l_{1})_{\ast}(l_{2})^{\ast}\alpha\wedge\chi.
\end{split}
\end{equation}
The second equality is due to the facts that both of $(p_{2})^{\ast}\alpha$ and $(p_{1})^{\ast}\chi$ are smooth which implies the finiteness of the integral, and $V$ is of measure zero. (\ref{e22}) exactly indicates that $(p_{1})_{\ast}(p_{2})^{\ast}\alpha=(l_{1})_{\ast}(l_{2})^{\ast}\alpha$. The other one is similar.
\end{proof}
\end{lemma}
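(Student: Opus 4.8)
The plan is to prove both identities by the same device: pair each side with an arbitrary test form and reduce the assertion to an elementary change of variables on a dense Zariski-open subset, using only the adjunction ``push-forward of a current against a test form equals the current paired with the pulled-back test form''.

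First I would treat $(p_{1})_{\ast}(p_{2})^{\ast}\alpha=(l_{1})_{\ast}(l_{2})^{\ast}\alpha$. Let $\chi$ be a test form on $\hat{X}$ of complementary degree. Since $p_{1}:W\rightarrow\hat{X}$ is a proper holomorphic modification, $(p_{1})_{\ast}(p_{2})^{\ast}\alpha$ is a well-defined current on $\hat{X}$, and by the definition of push-forward $\int_{\hat{X}}(p_{1})_{\ast}(p_{2})^{\ast}\alpha\wedge\chi=\int_{W}(p_{2})^{\ast}\alpha\wedge(p_{1})^{\ast}\chi$; this integral is finite because its integrand is a smooth form of top degree on the compact manifold $W$. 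Next I would discard the bad locus. Because $f$ restricts to a biholomorphism on $\hat{X}\setminus E$, the graph $\Gamma_{f}$ is a closed complex submanifold of $(\hat{X}\setminus E)\times X$, hence $\Gamma_{f}=\bar{\Gamma}_{f}\setminus\big(\bar{\Gamma}_{f}\cap(E\times X)\big)$ is Zariski-open in $\bar{\Gamma}_{f}$ and lies in the smooth locus of $\bar{\Gamma}_{f}$, over which the desingularization $\mu$ is a biholomorphism. Therefore $W\setminus\mu^{-1}(\Gamma_{f})=\mu^{-1}(\bar{\Gamma}_{f}\setminus\Gamma_{f})$ is a proper analytic subset of $W$ containing $V$; being a proper analytic subset it is Lebesgue-null, so $\int_{W}(p_{2})^{\ast}\alpha\wedge(p_{1})^{\ast}\chi=\int_{\mu^{-1}(\Gamma_{f})}(p_{2})^{\ast}\alpha\wedge(p_{1})^{\ast}\chi$.

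On $\mu^{-1}(\Gamma_{f})$ the map $\mu$ is a biholomorphism onto $\Gamma_{f}$ satisfying $p_{i}=l_{i}\circ\mu$, so the change-of-variables formula gives $\int_{\mu^{-1}(\Gamma_{f})}(p_{2})^{\ast}\alpha\wedge(p_{1})^{\ast}\chi=\int_{\Gamma_{f}}(l_{2})^{\ast}\alpha\wedge(l_{1})^{\ast}\chi=\int_{\hat{X}}(l_{1})_{\ast}(l_{2})^{\ast}\alpha\wedge\chi$, the last equality being the definition of $(l_{1})_{\ast}$. Since $\chi$ is arbitrary, the two currents agree on $\hat{X}$; as a by-product this shows that $(l_{1})_{\ast}(l_{2})^{\ast}\alpha$, a priori only a smooth form on $\hat{X}\setminus E$, genuinely extends across $E$ to a well-defined current, namely the honest push-forward from $W$. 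For the second identity $(p_{2})_{\ast}(p_{1})^{\ast}\beta=(l_{2})_{\ast}(l_{1})^{\ast}\beta$ I would run exactly the same argument with the roles of $\hat{X}$ and $X$, and of the subscripts $1$ and $2$, interchanged, using that $p_{2}:W\rightarrow X$ is likewise a proper modification and that $\mu^{-1}(\Gamma_{f})\rightarrow\Gamma_{f}$ is the same biholomorphism.

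I do not expect any genuine analytic difficulty here; the argument is bookkeeping around the push-forward/pull-back adjunction. The one point that deserves care — and the step I would flag as the crux — is the claim that $\mu$ restricts to a biholomorphism over $\Gamma_{f}$ and that the residual set in $W$ (namely $V$ together with whatever else sits over $\bar{\Gamma}_{f}\setminus\Gamma_{f}$) is a proper analytic subset, hence of measure zero. This is what licenses deleting it from the domain of integration without changing any integral, and everything else follows formally.
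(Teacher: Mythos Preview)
Your proof is correct and follows essentially the same route as the paper: pair with a test form, use the push-forward/pull-back adjunction to move to $W$, excise a measure-zero analytic set, and change variables via $\mu$ to land on $\Gamma_{f}$. The paper simply writes $\int_{W\setminus V}$ and passes directly to $\int_{\Gamma_{f}}$; you are slightly more careful in identifying the set to be removed as $W\setminus\mu^{-1}(\Gamma_{f})$ (which contains $V$) and in justifying that $\mu$ is a biholomorphism over $\Gamma_{f}$, but the argument is the same in substance.
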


\begin{lemma}\label{l25}
Let $\alpha$ (resp. $\beta$) be a smooth $k$-form on $\hat{X}$ (resp. $X$). Then
\[
(p_{2})_{\ast}(p_{2})^{\ast}\alpha=\alpha,\quad(l_{2})_{\ast}(l_{2})^{\ast}\alpha=\alpha
\]
and
\[
(p_{1})_{\ast}(p_{1})^{\ast}\beta=\beta,\quad(l_{1})_{\ast}(l_{1})^{\ast}\beta=\beta.
\]
\begin{proof}
Let $\chi$ be a test form on $X$. Then 
\begin{equation}\label{e23}
\begin{split}
\int_{X}(p_{2})_{\ast}(p_{2})^{\ast}\alpha\wedge\chi&=\int_{W}(p_{2})^{\ast}\alpha\wedge(p_{2})^{\ast}\chi \\
&=\int_{W\setminus V}(p_{2})^{\ast}\alpha\wedge(p_{2})^{\ast}\chi \\
&=\int_{X\setminus Y}\alpha\wedge\chi \\
&=\int_{X}\alpha\wedge\chi.
\end{split}
\end{equation}
The second (resp. forth) equality is due to the facts that both of $(p_{1})^{\ast}\alpha$ and $(p_{1})^{\ast}\chi$ (resp. $\alpha$ and $\chi$) are smooth which implies the finiteness of the integral, and $V$ (resp. $Y$) is of measure zero. (\ref{e23}) exactly indicates that $(p_{1})_{\ast}(p_{1})^{\ast}\alpha=\alpha$. The rest equalities are similar.
\end{proof}
\end{lemma}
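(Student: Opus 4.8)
The plan is to derive all four identities from one computation, since each of them asserts that a push-forward \emph{current} on the target coincides with a given smooth form; it therefore suffices to pair both sides against an arbitrary test form and to use that the map in question restricts to a biholomorphism over the complement of a proper analytic subset, which has Lebesgue measure zero. I will write out $(p_{2})_{\ast}(p_{2})^{\ast}\alpha=\alpha$ in full; the three remaining equalities follow by the same argument after interchanging the roles of $X$ and $\hat{X}$ (so that $Y$ is replaced by $E$) and, for the statements involving $l_{1},l_{2}$, after replacing $W$ by $\Gamma_{f}$ and $p_{i}$ by $l_{i}$.

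So let $\alpha$ be a smooth $k$-form on $X$ and let $\chi$ be a test $(2n-k)$-form on $X$. By the definition of the push-forward of a current, $(p_{2})_{\ast}(p_{2})^{\ast}\alpha$ evaluated against $\chi$ equals $\int_{W}(p_{2})^{\ast}\alpha\wedge(p_{2})^{\ast}\chi=\int_{W}(p_{2})^{\ast}(\alpha\wedge\chi)$, which is a finite number because $W$ is compact and the integrand is a smooth $2n$-form. The set $p_{2}^{-1}(Y)$ is a proper analytic subset of $W$, hence of measure zero, so the integral is unchanged if we remove it; on $W\setminus p_{2}^{-1}(Y)$ the map $p_{2}$ is a biholomorphism onto $X\setminus Y$, and the change of variables gives $\int_{W\setminus p_{2}^{-1}(Y)}(p_{2})^{\ast}(\alpha\wedge\chi)=\int_{X\setminus Y}\alpha\wedge\chi$. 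Removing $Y$, again of measure zero, this equals $\int_{X}\alpha\wedge\chi$. Hence $(p_{2})_{\ast}(p_{2})^{\ast}\alpha$ and $\alpha$ define the same current on $X$, so they are equal. For the $l_{2}$-version I first invoke the remark preceding the lemma, which ensures that pull-back and push-forward along $l_{2}:\Gamma_{f}\to X$ are well-defined despite $\Gamma_{f}$ being non-compact; the integral $\int_{\Gamma_{f}}(l_{2})^{\ast}(\alpha\wedge\chi)$ is then finite because $l_{2}$ is a biholomorphism of $\Gamma_{f}$ onto $X\setminus Y$ and $\alpha\wedge\chi$ is smooth on the compact manifold $X$, after which the computation is identical.

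There is no genuine obstacle: the whole proof is bookkeeping about analytic subsets of measure zero. The only points that deserve a moment's care are that the four displayed equalities are to be read first as identities of currents (and only a posteriori of smooth forms), and that the right to discard $p_{2}^{-1}(Y)$, $Y$, and their analogues rests exactly on the smoothness of $\alpha\wedge\chi$ together with the compactness of $W$ — respectively, for $l_{2}$, on the fact recorded before the lemma that the push-forward along $l_{2}$ makes sense — which is what guarantees the finiteness of the integrals being manipulated.
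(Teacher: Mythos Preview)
Your proof is correct and follows essentially the same route as the paper: pair against a test form, use the definition of push-forward, excise a measure-zero analytic subset, apply the change of variables along the biholomorphism over the complement, and restore the measure-zero set on the target. The only cosmetic difference is that you excise $p_{2}^{-1}(Y)$ whereas the paper excises $V$; your choice is arguably cleaner since it makes the biholomorphism onto $X\setminus Y$ immediate, and your remark on the finiteness of the $l_{2}$-integral (via the biholomorphism $\Gamma_{f}\cong X\setminus Y$) is a point the paper leaves implicit under ``similar''.
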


\section{The birational invariance}
\label{sec:birational}
This section is devoted to the proof of Theorems \ref{t11}, \ref{t12} and \ref{t13}. We will see that the ingredient is a topological technique developed in \cite{BCDT24}. 

\subsection{Topological preparation I}
\label{sec:topology}
Let $X$ be a simplicial complex. Then a $k$-form $\omega$ on $X$ consists of a smooth $k$-form $\omega_{\sigma}$ for every simplex $\sigma\subseteq X$ such that $\omega_{\sigma}|_{\tau}\equiv\omega_{\tau}$ whenever $\tau\subseteq\sigma$ is a subsimplex. A Riemannian metric on $X$ is a choice of a Riemannian metric $g_{\sigma}$ on every simplex $\sigma\subseteq X$ such that $g_{\sigma}|_{\tau}\equiv g_{\tau}$ for $\tau\subseteq\sigma$. Under these conventions we are able to talk about $\tilde{d}$-bounded classes on $X$.

We should recall and generalize the descriptions on hyperbolic classes in \cite{BCDT24,BKS24}. Let $V^{k}_{\textrm{asph}}(X)$ be the subspace of $H^{k}(X,\mathbb{R})$, which consists of the $k$-th cohomology classes whose pull-back to $k$-dimensional sphere must be zero. Let $V^{k}_{\textrm{hyp}}(X)$ be the subspace of $H^{k}(X,\mathbb{R})$, which consists of the $\tilde{d}$-bounded $k$-th cohomology classes. We will say that an element $[\omega]\in V^{k}_{\textrm{asph}}(X)$ (resp. $[\omega]\in V^{k}_{\textrm{hyp}}(X)$) is {\it aspherical} (resp. {\it hyperbolic}). As soon as $k\geqslant2$, every continuous map from a $k$-dimensional sphere $S^{k}\rightarrow X$ factorizes through the universal cover of $X$, so $V^{k}_{\textrm{hyp}}(X)\subseteq V^{k}_{\textrm{asph}}(X)$ for $k\geqslant2$. 

Let $G=\pi_{1}(X)$ and consider the classifying space $EG\rightarrow BG$. Given the universal cover $\tilde{X}\rightarrow X$, there is a unique (up to homotopy) classifying map of the universal cover
\[
c_{\tilde{X},X}:X\rightarrow BG
\]
such that $\tilde{X}$ is isomorphic to the pull-back $c^{\ast}_{\tilde{X},X}EG$ as a $G$-principal bundle. Define the subspace $V^{k}_{\textrm{hyp}}(BG)$ of the real singular cohomology group $H^{k}(BG,\mathbb{R})$ to be the set of $k$-cohomology classes whose pull-backs to any finite simplicial complex are hyperbolic.

Then we have
\begin{theorem}[(c.f. \cite{BKS24}, Theorem 2.5)]\label{t31}
Fix an integer $k\geqslant2$. Let $X$, $Y$ be two finite simplicial complexes such that $\pi_{i}(X)=0$ for $2\leqslant i\leqslant k-1$. Let $f:Y\rightarrow BG$ be an arbitrary continuous map, and let $[\omega]\in H^{k}(BG,\mathbb{R})$. If $c^{\ast}_{\tilde{X},X}[\omega]\in V^{k}_{\textrm{hyp}}(X)$, then $f^{\ast}[\omega]\in V^{k}_{\textrm{hyp}}(Y)$. 
\end{theorem}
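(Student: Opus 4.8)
The plan is to reduce the assertion to three facts: (i) hyperbolicity of a degree-$k$ class depends only on the $k$-skeleton; (ii) the connectivity hypothesis makes the classifying map $c:=c_{\tilde X,X}\colon X\to BG$ a $k$-equivalence; (iii) pull-back along a continuous map of finite simplicial complexes preserves hyperbolicity. Granting these, the assembly is short. Throughout one may assume $X$ and $Y$ connected, the general case following componentwise.

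For (iii): let $h\colon A\to B$ be a continuous map of connected finite simplicial complexes and $\eta\in H^{m}(B,\mathbb R)$ hyperbolic. Since $\pi_{1}(\tilde A)=1$, the composite $h\circ\pi_{A}\colon\tilde A\to B$ lifts to $\hat h\colon\tilde A\to\tilde B$ with $\pi_{B}\circ\hat h=h\circ\pi_{A}$. If $\pi_{B}^{\ast}\eta=d\beta$ with $\beta$ bounded, then $\pi_{A}^{\ast}h^{\ast}\eta=\hat h^{\ast}\pi_{B}^{\ast}\eta=d(\hat h^{\ast}\beta)$, and $\hat h^{\ast}\beta$ is again bounded, because after a simplicial approximation $h$ is Lipschitz, hence $\hat h$ is uniformly Lipschitz for the pulled-back metrics. (This boundedness statement is exactly \cite{BDET24}, Lemma~2.28, already used in the proof of Lemma~\ref{l23}.)

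For (i): let $Z$ be a connected finite simplicial complex and $\theta\in H^{k}(Z,\mathbb R)$ with $k\ge2$; the claim is that $\theta$ is hyperbolic on $Z$ if and only if $\theta|_{Z^{(k)}}$ is hyperbolic on the $k$-skeleton $Z^{(k)}$. I would prove this with simplicial $\mathbb R$-cochains (equivalent to the form-theoretic notion of Subsection~\ref{sec:topology} via the de~Rham--Whitney comparison, which preserves boundedness on covers of finite complexes): one uses that $\widetilde{Z^{(k)}}=(\tilde Z)^{(k)}$ for $k\ge2$, that the groups of $(k-1)$-cochains of $\tilde Z$ and of $(\tilde Z)^{(k)}$ are canonically identified with equal sup-norms, and that for a cocycle representative $z$ of $\theta$ the equation $\pi_{Z}^{\ast}z=\delta\beta$ is literally the same on $\tilde Z$ and on $(\tilde Z)^{(k)}$; independence of the choice of $z$ (replacing $z$ by $z+\delta w$ alters the primitive by the bounded cochain $\pi_{Z}^{\ast}w$) and of the Riemannian metric is as already noted in the excerpt.

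It remains to assemble. By construction $c_{\ast}\colon\pi_{1}(X)\to\pi_{1}(BG)$ is an isomorphism, $\pi_{i}(BG)=0$ for $i\ge2$, and $\pi_{i}(X)=0$ for $2\le i\le k-1$ by hypothesis, so $c$ is a $k$-equivalence; since $\dim Y^{(k)}\le k$, the induced map $[Y^{(k)},X]\to[Y^{(k)},BG]$ is surjective. Choose $h\colon Y^{(k)}\to X$ with $c\circ h\simeq f|_{Y^{(k)}}$. Then $(f^{\ast}[\omega])|_{Y^{(k)}}=h^{\ast}(c^{\ast}[\omega])$, which is hyperbolic on $Y^{(k)}$ by (iii) since $c^{\ast}[\omega]\in V^{k}_{\textrm{hyp}}(X)$; by (i) applied to $Z=Y$, $f^{\ast}[\omega]$ is hyperbolic, i.e.\ $f^{\ast}[\omega]\in V^{k}_{\textrm{hyp}}(Y)$. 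I expect step (i) to be the main obstacle: one must set up $\tilde d$-boundedness combinatorially enough that ``hyperbolicity is a $k$-skeletal property'' becomes transparent and the identification $\widetilde{Z^{(k)}}=(\tilde Z)^{(k)}$ can be exploited directly; the uniform Lipschitz bound in (iii) is the remaining technical point, and it is available from \cite{BDET24}.
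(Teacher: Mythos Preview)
The paper does not prove Theorem~\ref{t31}; it imports the statement verbatim from \cite{BKS24}, Theorem~2.5, so there is no in-paper proof to compare against. Your argument is correct and is essentially the one in \cite{BKS24}: the connectivity hypothesis makes $c_{\tilde X,X}$ a $k$-equivalence, so by Whitehead's theorem $f|_{Y^{(k)}}$ factors through $X$ up to homotopy, whence hyperbolicity on $Y^{(k)}$ follows from (iii); step~(i) then upgrades this to $Y$.

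It is worth noting, however, that the paper \emph{does} prove a closely related statement, Proposition~\ref{p41}, where $BG$ is replaced by a Postnikov section $U$ and no connectivity hypothesis on $X$ is imposed. There the argument is quite different from yours: rather than reducing to the $k$-skeleton and invoking a factorisation through $X$, the paper works directly on the target, extending a bounded primitive across each attached cell by hand (using $H^{2n-2}_{\mathrm{dR}}(D^{k})=0$ and $H^{2n-3}_{\mathrm{dR}}(S^{k-1})=0$ to correct the boundary mismatch). Your skeletal/obstruction-theoretic approach is cleaner and more conceptual, but it relies on the de~Rham--Whitney comparison preserving boundedness in order to pass between smooth forms and simplicial cochains in step~(i); the paper's cell-by-cell argument stays entirely within the smooth-form framework and avoids that translation. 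Both routes are valid.
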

Equivalently, Theorem \ref{t31} implies that if $c^{\ast}_{\tilde{X},X}[\omega]$ is hyperbolic, $[\omega]$ is also hyperbolic. We also have
\begin{lemma}\label{l31}
Fix an integer $k\geqslant2$. Then
\[
c^{\ast}_{\tilde{X},X}(H^{k}(BG,\mathbb{R}))=V^{k}_{\textrm{asph}}(X).
\]
If we furthermore assume that $\pi_{i}(X)=0$ for $2\leqslant i\leqslant k$, the pull-back 
\[
c^{\ast}_{\tilde{X},X}:H^{k}(BG,\mathbb{R})\rightarrow H^{k}(X,\mathbb{R})
\] 
is even injective.

\begin{proof}
Firstly, every class in $H^{k}(BG,\mathbb{R})$ is aspherical as is pointed out in \cite{BCDT24}, Lemma 2.9. On the other hand, the pull-back of an aspherical class via $c^{\ast}_{\tilde{X},X}$ is still aspherical by definition. It implies 
\[
c^{\ast}_{\tilde{X},X}(H^{k}(BG,\mathbb{R}))=c^{\ast}_{\tilde{X},X}(V^{k}_{\textrm{asph}}(BG))\subseteq V^{k}_{\textrm{asph}}(X).
\]

Next we construct a model of $BG$ by attaching cells of real dimension $3$ or higher to $X$ to make the universal cover contractible without affecting $\pi_{1}(X)$. In this situation, $X\subseteq BG$, and the classifying map $c_{\tilde{X},X}$ is the inclusion. Now from the long exact sequence of the pair $X\subseteq BG$ in cohomology, we get
\[
H^{k}(BG,X,\mathbb{R})\rightarrow H^{k}(BG,\mathbb{R})\xrightarrow{c^{\ast}_{\tilde{X},X}} H^{k}(X,\mathbb{R})\xrightarrow{\delta} H^{k+1}(BG,X,\mathbb{R}).
\]
It remains to prove that if $[\omega]\in V^{k}_{\textrm{asph}}(X)$, then $\delta[\omega]=0$. In fact, by the universal coefficeint theorem,  
\[
\delta[\omega]\in H_{k+1}(BG,X,\mathbb{R})^{\ast}
\] 
acts on a relative $(k+1)$-cycle $\gamma+C_{k+1}(X)$ by $\delta[\omega](\gamma+C_{k+1}(X))=\omega(\partial_{k+1}\gamma)$, where $\partial_{k+1}$ is the boundary operator. Observe that $\partial_{k+1}\gamma\in Z_{k}(X)$ is a $k$-cycle in $X$ which, if non-trivial, comes from some $(k+1)$-cells attached to $X$. It means that $\partial_{k+1}\gamma$ is a linear combination of the images of $S^{k}$. Therefore, $\omega(\partial_{k+1}\gamma)=0$ since $[\omega]$ is aspherical.

In the end, if $\pi_{i}(X)=0$ for $2\leqslant i\leqslant k$, the relative homology group $H_{k}(BG,X,\mathbb{R})$ vanishes by the relative Hurewicz theorem. So does $H^{k}(BG,X,\mathbb{R})$ by duality. Consequently, we obtain the injectivity from the exact sequence above.
\end{proof}
\end{lemma}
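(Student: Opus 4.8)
The plan is to treat the two assertions separately, in each case comparing $X$ with an explicit cellular model of $BG$ and feeding the comparison into the long exact sequence of the pair $(BG,X)$. For the equality, I would first record the well-known fact that every class on $BG$ is aspherical: since $EG$ is contractible, any continuous map $S^{k}\to BG$ with $k\geqslant 2$ lifts through $EG$, so its pull-back to $S^{k}$ vanishes, whence $H^{k}(BG,\mathbb{R})=V^{k}_{\textrm{asph}}(BG)$. As asphericity is visibly preserved under pull-back — if $h\colon S^{k}\to X$ then $h^{\ast}c^{\ast}_{\tilde{X},X}[\omega]=(c_{\tilde{X},X}\circ h)^{\ast}[\omega]=0$ — the inclusion $c^{\ast}_{\tilde{X},X}(H^{k}(BG,\mathbb{R}))\subseteq V^{k}_{\textrm{asph}}(X)$ is immediate.

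For the reverse inclusion I would construct a model of $BG$ by attaching cells of real dimension $\geqslant 3$ to $X$, killing $\pi_{i}(X)$ for $i\geqslant 2$ while leaving $\pi_{1}$ untouched; then $X\subseteq BG$ and $c_{\tilde{X},X}$ is the inclusion. Running the cohomology long exact sequence
\[
H^{k}(BG,X;\mathbb{R})\longrightarrow H^{k}(BG;\mathbb{R})\xrightarrow{\;c^{\ast}_{\tilde{X},X}\;} H^{k}(X;\mathbb{R})\xrightarrow{\;\delta\;} H^{k+1}(BG,X;\mathbb{R})
\]
reduces the claim to showing $\delta[\omega]=0$ for every $[\omega]\in V^{k}_{\textrm{asph}}(X)$. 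By the universal coefficient theorem $\delta[\omega]$ is the functional on $H_{k+1}(BG,X;\mathbb{R})$ carrying a relative cycle $\gamma$ to $\omega(\partial\gamma)$; but $\partial\gamma$ is a $k$-cycle supported on the $(k+1)$-cells glued to $X$, hence a linear combination of images of maps $S^{k}\to X$, on which an aspherical $[\omega]$ evaluates to zero. Exactness then places $[\omega]$ in the image of $c^{\ast}_{\tilde{X},X}$. For the injectivity statement, I would note that if in addition $\pi_{i}(X)=0$ for $2\leqslant i\leqslant k$, then in the model above no cell of dimension $\leqslant k+1$ needs to be attached, so the pair $(BG,X)$ is $k$-connected; the relative Hurewicz theorem then gives $H_{k}(BG,X;\mathbb{R})=0$, hence $H^{k}(BG,X;\mathbb{R})=0$ by duality over $\mathbb{R}$, and the long exact sequence above forces $c^{\ast}_{\tilde{X},X}$ to be injective in degree $k$.

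The step I expect to demand the most care is the identification of $\partial\gamma$ with a linear combination of spherical cycles: one must pass to a cellular chain model for $(BG,X)$, use that the attaching maps of the new cells land inside $X$, and verify that the pairing $\omega(\partial\gamma)$ depends only on the relative homology class of $\gamma$, so that the asphericity hypothesis may legitimately be applied. The remainder is a formal diagram chase combined with the relative Hurewicz theorem and the contractibility of $EG$.
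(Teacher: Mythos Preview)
Your proposal is correct and follows essentially the same route as the paper: the inclusion $c^{\ast}_{\tilde{X},X}(H^{k}(BG,\mathbb{R}))\subseteq V^{k}_{\textrm{asph}}(X)$ via asphericity of $BG$, the explicit model of $BG$ built by attaching cells of dimension $\geqslant 3$ to $X$, the long exact sequence of the pair, the identification of $\delta[\omega]$ with the pairing against boundaries of attached cells, and the relative Hurewicz argument for injectivity all match the paper's proof almost line for line. Your added remark that under the hypothesis $\pi_{i}(X)=0$ for $2\leqslant i\leqslant k$ no cells of dimension $\leqslant k+1$ need be attached is a slightly more explicit justification of the same Hurewicz step the paper invokes directly.
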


Combining Theorem \ref{t31} with Lemma \ref{l31}, we obtain that
\begin{corollary}\label{c31}
Fix an integer $k\geqslant2$. Let $X$ be a finite simplicial complex such that $\pi_{i}(X)=0$ for $2\leqslant i\leqslant k-1$. Then
\[
c^{\ast}_{\tilde{X},X}(V^{k}_{\textrm{hyp}}(BG))=V^{k}_{\textrm{hyp}}(X).
\]
\begin{proof}
By definition $c^{\ast}_{\tilde{X},X}(V^{k}_{\textrm{hyp}}(BG))\subseteq V^{k}_{\textrm{hyp}}(X)$. Now we prove the opposite inclusion. If $[\alpha]\in V^{k}_{\textrm{hyp}}(X)$, then $[\alpha]\in V^{k}_{\textrm{asph}}(X)$, and thus by Lemma \ref{l31} there exists a class $[\omega]\in H^{k}(BG,\mathbb{R})$ such that 
\[
[\alpha]=c^{\ast}_{\tilde{X},X}[\omega].
\] 
So $c^{\ast}_{\tilde{X},X}[\omega]$ is hyperbolic. Now, by Theorem \ref{t31}, $[\omega]\in V^{k}_{\textrm{hyp}}(BG)$. 
\end{proof}
\end{corollary}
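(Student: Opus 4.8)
The plan is to establish the two inclusions separately. The inclusion $c^{\ast}_{\tilde{X},X}(V^{k}_{\textrm{hyp}}(BG))\subseteq V^{k}_{\textrm{hyp}}(X)$ is purely formal: the classifying map $c_{\tilde{X},X}\colon X\to BG$ has as its source the \emph{finite} simplicial complex $X$, so by the definition of $V^{k}_{\textrm{hyp}}(BG)$ --- classes whose pull-backs to any finite simplicial complex are hyperbolic --- the pull-back along $c_{\tilde{X},X}$ of any element of $V^{k}_{\textrm{hyp}}(BG)$ is $\tilde{d}$-bounded on $X$, hence lies in $V^{k}_{\textrm{hyp}}(X)$.

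For the reverse inclusion I would take $[\alpha]\in V^{k}_{\textrm{hyp}}(X)$ and proceed in two steps. First, since $k\geqslant 2$, $[\alpha]$ is automatically aspherical: a continuous map $S^{k}\to X$ factors through the universal cover $\tilde{X}$, and $\pi^{\ast}\alpha=d\beta$ with $\beta$ bounded forces the pull-back of $[\alpha]$ to $S^{k}$ to vanish; thus $V^{k}_{\textrm{hyp}}(X)\subseteq V^{k}_{\textrm{asph}}(X)$ and in particular $[\alpha]\in V^{k}_{\textrm{asph}}(X)$. Then I would apply the first assertion of Lemma \ref{l31}, namely $c^{\ast}_{\tilde{X},X}(H^{k}(BG,\mathbb{R}))=V^{k}_{\textrm{asph}}(X)$, to obtain a class $[\omega]\in H^{k}(BG,\mathbb{R})$ with $c^{\ast}_{\tilde{X},X}[\omega]=[\alpha]$.

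The second step upgrades $[\omega]$ from an arbitrary class to a hyperbolic one, and this is where the hypothesis $\pi_{i}(X)=0$ for $2\leqslant i\leqslant k-1$ is used: it is precisely the hypothesis needed to invoke Theorem \ref{t31} with the finite simplicial complex $X$. Since $c^{\ast}_{\tilde{X},X}[\omega]=[\alpha]$ is $\tilde{d}$-bounded, Theorem \ref{t31} (in the equivalent reformulation stated just after it) gives $[\omega]\in V^{k}_{\textrm{hyp}}(BG)$, whence $[\alpha]=c^{\ast}_{\tilde{X},X}[\omega]\in c^{\ast}_{\tilde{X},X}(V^{k}_{\textrm{hyp}}(BG))$. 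The argument carries no computational content: modulo Theorem \ref{t31} and Lemma \ref{l31} it is the formal observation that every hyperbolic class on $X$ is the pull-back of a class on $BG$ which is itself forced to be hyperbolic. I do not expect a genuine obstacle here; the only point to watch is that the vanishing-homotopy hypothesis is exactly the hypothesis of Theorem \ref{t31}, so it must be carried through verbatim.
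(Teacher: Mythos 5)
Your proposal is correct and follows exactly the paper's own argument: the forward inclusion is by definition of $V^{k}_{\textrm{hyp}}(BG)$, and the reverse inclusion combines the inclusion $V^{k}_{\textrm{hyp}}(X)\subseteq V^{k}_{\textrm{asph}}(X)$ with Lemma \ref{l31} to lift $[\alpha]$ to a class $[\omega]$ on $BG$, and then Theorem \ref{t31} to conclude that $[\omega]$ is hyperbolic. There is no substantive difference from the paper's proof.
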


\begin{corollary}\label{c32}
Fix an integer $k\geqslant2$. Let $f:W\rightarrow X$ be a continuous map between two finite simplicial complexes, such that $f_{\ast}:\pi_{1}(W)\rightarrow\pi_{1}(X)$ is isomorphic and $\pi_{i}(W)=0$ for $2\leqslant i\leqslant k-1$. Then
\[
f^{\ast}(V^{k}_{\textrm{hyp}}(X))=V^{k}_{\textrm{hyp}}(W).
\]
\begin{proof}
Let $\pi:\tilde{X}\rightarrow X$ be the universal cover. Since $\pi_{1}(W)\simeq\pi_{1}(X)$, $\chi:\tilde{W}:=f^{\ast}\tilde{X}\rightarrow W$ gives the universal cover of $W$. By construction, $\tilde{W}$ is the pull-back of $EG$ via $c_{\tilde{X},X}\circ f$, so that
\[
c_{\tilde{X},X}\circ f=c_{\tilde{W},W}
\]
is the classifying map of the universal cover $\tilde{W}$ as a $G$-principal bundle on $W$.

Now for an arbitrary $[\omega]\in V^{k}_{\textrm{hyp}}(W)$, by Corollary \ref{c31} there exists a class $[\alpha]\in V^{k}_{\textrm{hyp}}(BG)$ such that $c^{\ast}_{\tilde{W},W}[\alpha]=[\omega]$. Let $[\beta]=c^{\ast}_{\tilde{X},X}[\alpha]\in V^{k}_{\textrm{hyp}}(X)$. Clearly $f^{\ast}[\beta]=[\omega]$. Namely, 
\[
V^{k}_{\textrm{hyp}}(W)\subseteq f^{\ast}(V^{k}_{\textrm{hyp}}(X)).
\] 
Remember the hyperbolicity is preserved under the pull-back via a simplicial map and every continuous map is homotopic to a simplicial one. The opposite inclusion is trivial.
\end{proof}
\end{corollary}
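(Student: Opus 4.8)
The plan is to reduce the statement to the classifying space $BG$ of $G:=\pi_{1}(W)\cong\pi_{1}(X)$ and then to quote Corollary \ref{c31}. The first thing I would set down is the compatibility of classifying maps that links $W$ and $X$: because $f_{\ast}:\pi_{1}(W)\to\pi_{1}(X)$ is an isomorphism, the pull-back of the universal cover $\pi:\tilde{X}\to X$ along $f$ is the universal cover $\chi:\tilde{W}:=f^{\ast}\tilde{X}\to W$, and it comes with a lift $h:\tilde{W}\to\tilde{X}$ of $f$. Since $\tilde{X}\cong c_{\tilde{X},X}^{\ast}EG$ as a $G$-principal bundle, this gives $\tilde{W}\cong(c_{\tilde{X},X}\circ f)^{\ast}EG$, so $c_{\tilde{X},X}\circ f$ is, up to homotopy, the classifying map $c_{\tilde{W},W}$ of the universal cover of $W$; equivalently $c_{\tilde{W},W}^{\ast}=f^{\ast}\circ c_{\tilde{X},X}^{\ast}$ on cohomology. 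This compatibility is the linchpin of the argument.

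For the inclusion $V^{k}_{\textrm{hyp}}(W)\subseteq f^{\ast}(V^{k}_{\textrm{hyp}}(X))$, let $[\omega]\in V^{k}_{\textrm{hyp}}(W)$. Since $\pi_{i}(W)=0$ for $2\leqslant i\leqslant k-1$ and $k\geqslant 2$, Corollary \ref{c31} applies to $W$ and yields $[\alpha]\in V^{k}_{\textrm{hyp}}(BG)$ with $c_{\tilde{W},W}^{\ast}[\alpha]=[\omega]$. Setting $[\beta]:=c_{\tilde{X},X}^{\ast}[\alpha]$, the compatibility above gives $f^{\ast}[\beta]=c_{\tilde{W},W}^{\ast}[\alpha]=[\omega]$; and since $X$ is a finite simplicial complex and $[\alpha]\in V^{k}_{\textrm{hyp}}(BG)$, the very definition of $V^{k}_{\textrm{hyp}}(BG)$ (a class whose pull-back to any finite simplicial complex is hyperbolic) shows $[\beta]\in V^{k}_{\textrm{hyp}}(X)$. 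Notice that no homotopy hypothesis on the target $X$ enters here.

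The reverse inclusion $f^{\ast}(V^{k}_{\textrm{hyp}}(X))\subseteq V^{k}_{\textrm{hyp}}(W)$ is the functorial direction. Given $[\beta]\in V^{k}_{\textrm{hyp}}(X)$, choose a bounded $(k-1)$-form $\eta$ on $\tilde{X}$ with $\pi^{\ast}\beta=d\eta$; pulling back along the lift $h$ gives $\chi^{\ast}f^{\ast}\beta=h^{\ast}\pi^{\ast}\beta=d(h^{\ast}\eta)$, so one only has to check that $h^{\ast}\eta$ stays bounded. After replacing $f$ by a homotopic simplicial map---harmless since $V^{k}_{\textrm{hyp}}$ is a homotopy invariant---this is the standard fact that a simplicial map has bounded dilatation on each simplex, so the pull-back of a bounded form under its lift is again bounded; I would quote the corresponding lemma from \cite{BCDT24} (compare the role of \cite{BDET24}, Lemma 2.28 in the proof of Lemma \ref{l23}).

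I expect the only real point requiring care to be the classifying-map identification $c_{\tilde{W},W}\simeq c_{\tilde{X},X}\circ f$, together with the attendant bookkeeping that the auxiliary class $[\alpha]\in V^{k}_{\textrm{hyp}}(BG)$ is transported correctly in both directions; once that is in hand, the statement is a formal consequence of Corollary \ref{c31} applied to $W$ plus the functoriality of $\tilde{d}$-boundedness, the homotopy-type hypothesis being used only to invoke Corollary \ref{c31}.
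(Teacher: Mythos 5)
Your proposal is correct and follows essentially the same route as the paper: identify $c_{\tilde{W},W}$ with $c_{\tilde{X},X}\circ f$ via the pulled-back universal cover, apply Corollary \ref{c31} to $W$ to factor a hyperbolic class through $V^{k}_{\textrm{hyp}}(BG)$, and dispose of the reverse inclusion by homotoping $f$ to a simplicial map and using that pull-back preserves $\tilde{d}$-boundedness. The extra detail you supply on the bounded-lift argument for the easy direction is consistent with what the paper leaves implicit.
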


\subsection{The balanced case}
\label{sec:balanced}
As a warm-up, we first prove the birational invariance of the balanced hyperbolicity. The ingredient is that the class of the balanced manifolds is invariant under the smooth modification, which is originally proved in \cite{AB91,AB92,AB95}. We sketch and extend their results as follows.

Let $f:\hat{X}\dashrightarrow X$ be a birational transform between compact complex manifolds of dimension $n$ with the exceptional set $E$ and the center $Y$. Let 
\[
\Gamma_{f}:=\{(x,f(x))\in\hat{X}\times X\mid x\in\hat{X}\setminus E\},
\] 
and let $\mu:W\rightarrow\bar{\Gamma}_{f}$ be the desingularization of the closure $\bar{\Gamma}_{f}$. Let $q_{1}:\bar{\Gamma}_{f}\rightarrow\hat{X}$ and $q_{2}:\bar{\Gamma}_{f}\rightarrow X$ be the natural projections. Then $p_{1}:=q_{1}\circ\mu:W\rightarrow\hat{X}$ and $p_{2}:=q_{2}\circ\mu:W\rightarrow X$ are both smooth modifications. Denote by $V$ the exceptional divisor of $\mu$. Clearly we have $p_{1}(V)\subseteq E$ and $p_{2}(V)\subseteq Y$.

Assume that $(\hat{X},\hat{\Omega})$ is balanced. By Proposition \ref{p21}, it means that for any positive $\partial\bar{\partial}$-closed $(1,1)$-current $S$ on $\hat{X}$, $\int_{\hat{X}}[\hat{\Omega}]\wedge S\geqslant0$ and $\int_{\hat{X}}[\hat{\Omega}]\wedge S=0$ if and only if $S=0$. Then we consider $[\Omega]:=(p_{2})_{\ast}(p_{1})^{\ast}[\hat{\Omega}]$. For any positive $\partial\bar{\partial}$-closed $(1,1)$-current $T$ on $X$, a direct computation implies that
\[
\int_{X}[\Omega]\wedge T=\int_{W}(p_{1})^{\ast}[\hat{\Omega}]\wedge\hat{T}=\int_{\hat{X}}[\hat{\Omega}]\wedge (p_{1})_{\ast}\hat{T}\geqslant0.
\]
Here $\hat{T}$ is the unique positive $\partial\bar{\partial}$-closed $(1,1)$-current on $W$ obtained by the following Theorem \ref{t32}.

\begin{theorem}[(c.f. \cite{AB95}, Theorem 3)]\label{t32}
Let $\mu:W\rightarrow X$ be a smooth modification between compact complex manifolds. Let $T$ be a positive $\partial\bar{\partial}$-closed $(1,1)$-current on $X$. Then there exists a unique positive $\partial\bar{\partial}$-closed $(1,1)$-current $\hat{T}$ on $W$ such that $\mu_{\ast}\hat{T}=T$ and $\hat{T}\in \mu^{\ast}[T]\in V^{1,1}(W,\mathbb{R})$.
\end{theorem}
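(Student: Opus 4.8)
The plan is to follow the structure of Alessandrini--Bassanelli. First I would fix an embedded resolution so that the exceptional locus $\operatorname{Exc}(\mu)$ is a simple normal crossing divisor $\sum_j D_j$ in $W$. The existence part is obtained by pulling back: since $\mu$ is a biholomorphism away from a measure-zero set, the current $\mu^{\ast}T$ is well-defined on $W\setminus\operatorname{Exc}(\mu)$ as a positive $\partial\bar\partial$-closed $(1,1)$-current of locally finite mass (the mass control uses that $T$ has locally bounded potentials in the $L^1_{\mathrm{loc}}$ sense and that $\mu$ is proper), so it extends by zero-order trivial extension across $\operatorname{Exc}(\mu)$ to a positive current $\tilde T$ on $W$. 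One then checks that $\tilde T$ is $\partial\bar\partial$-closed across the exceptional divisor using a Skoda--El Mir type extension theorem; this is where the simple normal crossing hypothesis is used. Finally one corrects $\tilde T$ by currents of integration $\sum_j c_j[D_j]$ with $c_j\in\mathbb{R}$ chosen so that the resulting current $\hat T:=\tilde T+\sum_j c_j[D_j]$ lies in the class $\mu^{\ast}[T]\in V^{1,1}(W,\mathbb{R})$; the coefficients $c_j$ are forced, because $\mu^{\ast}[T]-[\tilde T]$ is a $\partial\bar\partial$-cohomology class supported on $\operatorname{Exc}(\mu)$, hence a real combination of the $[D_j]$.

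The key technical point I would isolate is positivity of the corrected current: adding $\sum_j c_j[D_j]$ could in principle destroy positivity if some $c_j<0$. The argument here is that the "defect" class $\mu^{\ast}[T]-[\tilde T]$ is represented by an effective object: pushing forward, $\mu_{\ast}(\mu^{\ast}[T]-[\tilde T]) = [T]-[T]=0$ in $V^{1,1}(X,\mathbb{R})$, and a negativity/rigidity lemma for the intersection form on the exceptional divisor (the exceptional divisor of a modification is $\mu_{\ast}$-exceptional and the associated quadratic form is negative definite on the span of the $[D_j]$) shows the correction coefficients are in fact $\geqslant0$. More precisely, I would test $\hat T$ against a Kähler-type form on the fibers of $\mu$ and use that $\tilde T$ already carries no mass on $\operatorname{Exc}(\mu)$ to conclude $c_j\geqslant 0$, so $\hat T$ is genuinely positive.

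For uniqueness, suppose $\hat T_1,\hat T_2$ are two positive $\partial\bar\partial$-closed $(1,1)$-currents on $W$ with $\mu_{\ast}\hat T_i=T$ and $\hat T_i\in\mu^{\ast}[T]$. Then $\hat T_1-\hat T_2$ is $\partial\bar\partial$-exact, so $\hat T_1-\hat T_2=i\partial\bar\partial\varphi$ for some distribution $\varphi$ on $W$, and $\mu_{\ast}(\hat T_1-\hat T_2)=0$ on $X$. Restricting to $W\setminus\operatorname{Exc}(\mu)\cong X\setminus Y$, we get $\hat T_1=\hat T_2$ there, so the difference is a current supported on $\operatorname{Exc}(\mu)$; being also $\partial\bar\partial$-exact and of order zero, it must be a real combination $\sum_j d_j[D_j]$ with $\sum_j d_j[D_j]=i\partial\bar\partial\varphi$. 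Pairing against test forms and using again the negative definiteness of the intersection form on $\langle[D_j]\rangle$ (equivalently, the fact that a nonzero $\partial\bar\partial$-exact combination of exceptional divisors cannot exist), we force all $d_j=0$, hence $\hat T_1=\hat T_2$.

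The main obstacle I anticipate is the positivity of the correction term, i.e. showing the coefficients $c_j$ are nonnegative; this is the heart of \cite{AB95} and relies on the fine structure of the modification near the exceptional divisor rather than on soft cohomological arguments. The extension-across-$\operatorname{Exc}(\mu)$ step (Skoda--El Mir) and the uniqueness step are comparatively routine once the simple normal crossing model is in place.
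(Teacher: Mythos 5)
The first thing to say is that the paper does not prove this statement: it is imported verbatim (with a ``c.f.'') from Alessandrini--Bassanelli \cite{AB95}, so there is no internal proof to compare against and your attempt has to be judged against the actual content of that theorem. Your skeleton --- pull back over $X\setminus Y$, establish a mass bound near the exceptional locus, take the trivial extension, correct by exceptional components, prove uniqueness via a support theorem --- is the right general shape, but the step you treat as routine is the whole theorem, and the justification you give for it fails. A positive $\partial\bar{\partial}$-closed $(1,1)$-current, unlike a $d$-closed one, has \emph{no} local potentials: it is not locally of the form $\alpha+i\partial\bar{\partial}u$ with $u$ plurisubharmonic, so the phrase ``the mass control uses that $T$ has locally bounded potentials in the $L^1_{\mathrm{loc}}$ sense'' has no content in this setting. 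The entire difficulty of \cite{AB95} (building on \cite{AB92}) is precisely to obtain the local mass estimate for $\mu^{\ast}T$ near $E$ \emph{without} potentials, using the fibered local structure of the modification and the $\partial\bar{\partial}$-closedness directly; likewise the Skoda--El Mir theorem you invoke is a statement about $d$-closed positive currents, and its $\partial\bar{\partial}$-closed analogue is itself one of Alessandrini--Bassanelli's contributions rather than an off-the-shelf tool.

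The positivity of the correction term and the uniqueness step have a second, independent problem. Your mechanism for $c_j\geqslant 0$ --- negative definiteness of the intersection form on the span of the $D_j$ --- is a surface/MMP statement about $d$-closed divisor classes; in dimension $n\geqslant 3$ there is no natural quadratic form on exceptional divisors, and here one works in the Aeppli-type group $V^{1,1}(W,\mathbb{R})$ of a possibly non-K\"{a}hler $W$, where even the linear independence of the $[D_j]$ requires an argument. In the actual proof the nonnegativity of the coefficients comes from the positivity of $T$ itself (they are generalized Lelong-type numbers along the centers), not from an intersection-theoretic rigidity lemma. The same issue affects uniqueness: reducing to a current supported on $E$ is fine (the condition $\mu_{\ast}\hat{T}=T$ does determine $\hat{T}$ on $W\setminus E$), but you then need both a support theorem for order-zero $\partial\bar{\partial}$-closed $(1,1)$-currents carried by a hypersurface and the nonvanishing of $\sum_j d_j[D_j]$ in $V^{1,1}(W,\mathbb{R})$ for $(d_j)\neq 0$; neither is supplied, and neither follows from ``negative definiteness''. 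So the proposal correctly locates where the difficulty lives but does not close it.
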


Since $\hat{\Omega}$ is balanced, $\int_{\hat{X}}[\hat{\Omega}]\wedge (p_{1})_{\ast}\hat{T}=0$ if and only if $(p_{1})_{\ast}\hat{T}=0$. It implies that $\textrm{supp}\hat{T}\subseteq V$, hence $\textrm{supp}T=\textrm{supp}(p_{2})_{\ast}\hat{T}\subseteq Y$. Note $\textrm{codim}Y\geqslant2$, we can apply the following proposition to deduce that $T=0$. It implies that $[\Omega]$ is a balanced class on $X$ by Proposition \ref{p21}.    

\begin{proposition}[(c.f. \cite{AB92}, Theorem 1.1)]\label{p31}
Let $X$ be a complex manifold of dimension $n$. Assume $T$ is a $\partial\bar{\partial}$-closed positive $(p,p)$-current on $X$ such that the Hausdorff $2(n-p)$-measure of $\textrm{supp}\,T$ vanishes. Then $T=0$.
\end{proposition}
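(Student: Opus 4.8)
The statement is local, so we may assume that $X=B$ is a Euclidean ball in $\mathbb{C}^{n}$, that $T$ is a positive $(p,p)$-current on $B$ with $\partial\bar{\partial}T=0$, and that $A:=\textrm{supp}\,T$ satisfies $\mathcal{H}^{2(n-p)}(A)=0$. Since $T$ is positive it has order $0$: writing $\beta=\frac{i}{2}\sum_{j}dz_{j}\wedge d\bar{z}_{j}$, the coefficients of $T$ are dominated, up to a dimensional constant, by the trace measure $\sigma_{T}:=T\wedge\frac{\beta^{\,n-p}}{(n-p)!}$, which is a positive Radon measure carried by $A$. Consequently $T=0$ if and only if $\sigma_{T}=0$, so it suffices to prove $\sigma_{T}(A\cap K)=0$ for every compact $K\Subset B$.

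The plan is to compare $\sigma_{T}$ with the Hausdorff measure $\mathcal{H}^{2(n-p)}$ through a density estimate. Here I would invoke the theory of Lelong numbers for positive $\partial\bar{\partial}$-closed currents (Alessandrini--Bassanelli): for such a $T$ the normalized mass $r\mapsto r^{-2(n-p)}\sigma_{T}(\mathbb{B}(x,r))$ satisfies an almost-monotonicity property, which yields, for each compact $K\Subset B$, a constant $C_{K}$ with
\[
\sigma_{T}(\mathbb{B}(x,r))\leqslant C_{K}\,r^{2(n-p)}\qquad\textrm{for all }x\in K,\ 0<r<\tfrac{1}{2}\textrm{dist}(K,\partial B).
\]
When $T$ is $d$-closed this is the classical monotonicity formula for Lelong numbers; the $\partial\bar{\partial}$-closed case is where genuine work is required. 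A standard covering argument from geometric measure theory then promotes this pointwise upper density bound to a set-function comparison: there is $C'_{K}$ with $\sigma_{T}(S)\leqslant C'_{K}\,\mathcal{H}^{2(n-p)}(S)$ for every Borel set $S\subseteq K$ --- cover $S$ efficiently by balls $\mathbb{B}(x_{i},r_{i})$ centred at points of $S$ with $\sum_{i}r_{i}^{2(n-p)}$ comparable to $\mathcal{H}^{2(n-p)}(S)$, and add up the density bounds.

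Applying the comparison with $S=A\cap K$ and using the hypothesis $\mathcal{H}^{2(n-p)}(A)=0$ gives $\sigma_{T}(A\cap K)=0$ for every $K$; letting $K\uparrow B$ yields $\sigma_{T}\equiv0$, hence $T\equiv0$ locally, hence $T=0$ on $X$. The main obstacle is the density estimate of the previous paragraph in the merely $\partial\bar{\partial}$-closed setting, since positivity plus $\partial\bar{\partial}$-closedness does not make $T$ normal or flat, so Federer's support theorem does not apply directly. If one prefers not to rely on the Alessandrini--Bassanelli Lelong theory, an alternative is an induction on $p$ by slicing with generic linear projections: a generic slice of a positive $\partial\bar{\partial}$-closed current is again positive and $\partial\bar{\partial}$-closed of bidimension one lower, the Eilenberg inequality shows that almost every slice still has support of vanishing Hausdorff measure in the correct degree, and the base case $p=n$ is immediate since $2(n-p)=0$ forces $A=\varnothing$; the delicate point of that route is verifying positivity and $\partial\bar{\partial}$-closedness of generic slices and arranging enough slicing directions to recover $T$ itself rather than only its horizontal part.
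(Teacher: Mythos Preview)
The paper does not supply a proof of this proposition at all; it simply quotes the result as Theorem~1.1 of \cite{AB92} and uses it as a black box. So there is no in-paper argument to compare your proposal against.

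That said, your outline is correct and is in fact a faithful reconstruction of the Alessandrini--Bassanelli approach in \cite{AB92}. The reduction to the vanishing of the trace measure $\sigma_{T}$ via the order-zero property of positive currents is standard, and the heart of the matter is exactly the density bound $\sigma_{T}(\mathbb{B}(x,r))\leqslant C_{K}r^{2(n-p)}$ for merely $\partial\bar{\partial}$-closed positive currents. You are right to flag this as the only non-routine step: it is precisely the content of the Lelong-number theory that \cite{AB92} develops (their almost-monotonicity involves correction terms coming from $\partial T$ and $\bar{\partial}T$, which are locally controlled). Once that estimate is in hand, the covering argument you describe is the standard Frostman-type comparison and goes through without difficulty. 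Your alternative slicing route is viable in principle but, as you note, requires care with positivity and $\partial\bar{\partial}$-closedness of slices; the density approach is cleaner and is what the reference actually does.
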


Now we are ready to prove the birational invariance of the balanced hyperbolic manifolds. 

\begin{proof}[Proof of Theorem \ref{t11}]
(1) Assume that $(\hat{X},\hat{\Omega})$ is balanced hyperbolic. Since the hyperbolicity is preserved under the pull-back via a continuous map, $[\Xi]:=(p_{1})^{\ast}[\hat{\Omega}]\in V^{2n-2}_{\textrm{hyp}}(W)$. Let $G=\pi_{1}(W)$, and fix a classifying space $EG\rightarrow BG$. Let $\pi:\tilde{X}\rightarrow X$ be the universal cover. Since $p_{2}$ is birational, $(p_{2})_{\ast}:\pi_{1}(W)\simeq\pi_{1}(X)$. Therefore $\chi:\tilde{W}=(p_{2})^{\ast}\tilde{X}\rightarrow W$ gives a universal cover of $W$. By construction, $\tilde{W}$ is the pull-back of $EG$ via $c_{\tilde{X},X}\circ p_{2}$, so that
\[
c_{\tilde{X},X}\circ p_{2}=c_{\tilde{W},W}
\]
is the classifying map of the universal cover $\tilde{W}$ as a $G$-principal bundle on $W$.

Since $[\Xi]\in V^{2n-2}_{\textrm{hyp}}(W)$, by Corollary \ref{c31} there exists a class $[\alpha]\in V^{2n-2}_{\textrm{hyp}}(BG)$ such that $c^{\ast}_{\tilde{W},W}[\alpha]=[\Xi]$. Let $[\Omega]=c^{\ast}_{\tilde{X},X}[\alpha]\in V^{2n-2}_{\textrm{hyp}}(X)$. Then we have $[\Xi]=(p_{2})^{\ast}[\Omega]$ by construction. Moreover, since $p_{2}$ is a birational morphism, we also have
\[
(p_{2})_{\ast}[\Xi]=(p_{2})_{\ast}(p_{2})^{\ast}[\Omega]=[\Omega].
\]
Remember that $[\Omega]=(p_{2})_{\ast}(p_{1})^{\ast}[\hat{\Omega}]$ is balanced as is shown before. It is exactly a balanced hyperbolic class on $X$.

(2) Assume that $(\hat{X},\hat{\Omega})$ is balanced hyperbolic. We apply the argument in (1) verbatim to obtain that $(p_{2})_{\ast}(p_{1})^{\ast}[\hat{\Omega}]$ is balanced hyperbolic. The converse is a mirror.
\end{proof}

The only difference between (1) and (2) is the codimension of $E$. Note that in any cases $\hat{X}$ is always a balanced manifold provided $X$ is by \cite{AB91}. However, the original proof therein doesn't work for the hyperbolicity. Hence when $f$ is a divisorial contraction and $X$ is balanced hyperbolic, it is still open whether $\hat{X}$ is a balanced hyperbolic manifold. 

\subsection{The K\"{a}hler case}
\begin{proof}[Proof of Theorem \ref{t12}]
Assume that $(\hat{X},\hat{\omega})$ is weakly K\"{a}hler $k$-hyperbolic. Since the hyperbolicity is preserved under the pull-back via a smooth map, $[\Xi]:=(p_{1})^{\ast}[\hat{\omega}^{k}]\in V^{2k}_{\textrm{hyp}}(W)$. Keep the notations as in the proof of Theorem \ref{t11}, $\chi:\tilde{W}=(q_{2})^{\ast}\tilde{X}\rightarrow W$ gives the universal cover of $W$. Moreover, $\tilde{W}$ is the pull-back of $EG$ via $c_{\tilde{X},X}\circ p_{2}$, so that
\[
c_{\tilde{X},X}\circ p_{2}=c_{\tilde{W},W}
\]
is the classifying map of the universal cover $\tilde{W}$ as a $G$-principal bundle on $W$.

Since $[\Xi]\in V^{2k}_{\textrm{hyp}}(W)$, by Corollary \ref{c31} there exists a class $[\alpha]\in V^{2k}_{\textrm{hyp}}(BG)$ such that 
\[
c^{\ast}_{\tilde{W},W}[\alpha]=[\Xi].
\] 
Let $[\beta]=c^{\ast}_{\tilde{X},X}[\alpha]\in V^{2k}_{\textrm{hyp}}(X)$. Then we have $[\Xi]=(p_{2})^{\ast}[\beta]$ by construction. Moreover, since $p_{2}$ is a birational morphism, we also have
\[
(p_{2})_{\ast}[\Xi]=(p_{2})_{\ast}(p_{2})^{\ast}[\beta]=[\beta].
\]
By hypothesis $k\mid (n-1)$, say $kt=n-1$. Then
\[
[\Xi^{t}]=(p_{1})^{\ast}[\hat{\omega}^{n-1}]\quad\textrm{and}\quad [\beta^{t}]=(p_{2})_{\ast}[\Xi^{t}].
\]
$[\Xi^{t}]$ is nef and big since $[\hat{\omega}]$ is. (See Lemma \ref{l22}.) Now we are left to prove the nefness and bigness of $[\Omega:=\beta^{t}]$. Suppose $[\Omega]$ is not nef. By (\ref{e21}) there exists a non-zero pseudo-effective $(1,1)$-class $[P]$ on $X$, such that 
\[
[\Xi^{t}]\cdot (p_{2})^{\ast}[P]=[\Omega]\cdot[P]<0.
\] 
It contradicts to the fact that $[\Xi^{t}]$ is nef and $(p_{2})^{\ast}[P]$ is pseudo-effective. Therefore $[\Omega]$ is nef.

In order to show the bigness of $[\Omega]$, we should apply the duality 
\begin{equation}\label{e31}
\overline{\mathcal{K}}=\mathcal{N}^{\vee}
\end{equation}
in \cite{BDPP13}, Theorem 2.1. Suppose $[\Omega]$ is not big. By (\ref{e31}) there exists a non-zero nef $(1,1)$-class $[\delta]$ on $X$, such that
\[
[\Xi^{t}]\cdot (p_{2})^{\ast}[\delta]=[\Omega]\cdot[\delta]<0.
\]
It contradicts to the fact that $[\Xi^{t}]$ is big and $(p_{2})^{\ast}[\delta]$ is nef. Therefore $[\Omega]$ is big.

The converse is a mirror.
\end{proof}

\subsection{On the smooth modification}
When $f$ is moreover a smooth modification, i.e $f:\hat{X}\rightarrow X$ is now a holomorphic map, we can make a more precise discussion. Note that at this time, as Jian Chen pointed it out to the authors, that a standard argument implies that we must have $\textrm{codim}E=1$.

\begin{proof}[Proof of Theorem \ref{t13}] 
(1) Assume that $(\hat{X},\hat{\Omega})$ is balanced hyperbolic. Let $G=\pi_{1}(X)$, and fix a classifying space $EG\rightarrow BG$. Since $f_{\ast}:\pi_{1}(\hat{X})\rightarrow\pi_{1}(X)$ is isomorphic, $\chi:Z:=f^{\ast}\tilde{X}\rightarrow\hat{X}$ gives the universal cover of $\hat{X}$. By construction, $Z$ is the pull-back of $EG$ via $c_{\tilde{X},X}\circ f$, so that
\[
c_{\tilde{X},X}\circ f=c_{Z,\hat{X}}
\]
is the classifying map of the universal cover $Z$ as a $G$-principal bundle on $\hat{X}$.

Since $[\hat{\Omega}]\in V^{2n-2}_{\textrm{hyp}}(\hat{X})$, by Corollary \ref{c31} there exists a class $[\alpha]\in V^{2n-2}_{\textrm{hyp}}(BG)$ such that $c^{\ast}_{Z,\hat{X}}[\alpha]=[\hat{\Omega}]$. Let $[\Omega]=c^{\ast}_{\tilde{X},X}[\alpha]\in V^{2n-2}_{\textrm{hyp}}(X)$. Then we have $[\hat{\Omega}]=f^{\ast}[\Omega]$ by construction. Moreover, since $f$ is birational, we also have
\[
f_{\ast}[\hat{\Omega}]=f_{\ast}f^{\ast}[\Omega]=[\Omega].
\]
Remember that $[\Omega]=f_{\ast}[\hat{\Omega}]$ is balanced as is shown before. It is exactly a balanced hyperbolic class on $X$. 

(2) Assume that $(X,\omega)$ is semi-balanced hyperbolic. It means that $\omega$ is non-negative and strictly positive on a Zariski open set $U$, so $f^{\ast}\omega$ is non-negative and strictly positive on a Zariski open set $f^{-1}(U)\setminus E$. The $\tilde{d}$-boundedness of $f^{\ast}\omega^{n-1}$ is directly by Lemma \ref{l23}. In summary, $f^{\ast}\omega$ is semi-balanced hyperbolic. The case of semi-K\"{a}hler $k$-hyperbolicity is similar.

(3) Assume that $(\hat{X},\hat{\beta})$ is weakly balanced $k$-hyperbolic. Then $[\hat{\beta}]\in V^{2k}_{\textrm{hyp}}(\hat{X})$. Here we use the fact that the $\tilde{d}$-boundedness of $\hat{\beta}$ implies the $d$-closedness. Similar to the proof of (1) and keep the notations there, there exists a class $[\alpha]\in V^{2k}_{\textrm{hyp}}(BG)$ such that $c^{\ast}_{Z,\hat{X}}[\alpha]=[\hat{\beta}]$. Let 
\[
[\beta]=c^{\ast}_{\tilde{X},X}[\alpha]\in V^{2k}_{\textrm{hyp}}(X).
\]  
Then $[\hat{\beta}]=c^{\ast}_{Z,\hat{X}}[\alpha]=f^{\ast}c^{\ast}_{\tilde{X},X}[\alpha]=f^{\ast}[\beta]$. Since $f$ is birational, we also have
\[
f_{\ast}[\hat{\beta}]=f_{\ast}f^{\ast}[\beta]=[\beta].
\]
By hypothesis $k\mid(n-1)$, say $kt=n-1$. Then
\[
[\hat{\beta}^{t}]=f^{\ast}[\beta^{t}]\quad\textrm{and}\quad f_{\ast}[\hat{\beta}^{t}]=[\beta^{t}].
\] 

Now we are left to prove the nefness and bigness of $[\Omega:=\beta^{t}]$. Suppose $[\Omega]$ is not nef. By (\ref{e21}) there exists a non-zero pseudo-effective $(1,1)$-class $[P]$ on $X$, such that 
\[
[\hat{\beta}^{t}]\cdot f^{\ast}[P]=[\Omega]\cdot[P]<0.
\] 
It contradicts to the fact that $[\hat{\beta}^{t}]$ is nef and $f^{\ast}[P]$ is pseudo-effective. Therefore $[\Omega]$ is nef.

Suppose $[\Omega]$ is not big. By (\ref{e31}) there exists a non-zero nef $(1,1)$-class $[\gamma]$ on $X$, such that
\[
[\hat{\beta}^{t}]\cdot f^{\ast}[\gamma]=[\Omega]\cdot[\gamma]<0.
\]
It contradicts to the fact that $[\hat{\beta}^{t}]$ is big and $f^{\ast}[\gamma]$ is nef. Therefore $[\Omega]$ is big. In summary, we obtain that $[\beta]$ is weakly balanced $k$-hyperbolic.
\end{proof}

It is possible to extend the duality (\ref{e31}) to the non-K\"{a}hler case, hence the K\"{a}hler assumption in (3) is not necessary. We will discuss this topic in an upcoming paper. Also one may wonder that if $(X,\beta)$ is weakly balanced $k$-hyperbolic, whether $\hat{X}$ will be. Naturally we could consider $f^{\ast}\beta^{t}$, which surely inherits the nefness and $\tilde{d}$-boundedness. However, as is pointed out in \cite{LX16}, Sect.5.7, a big $(n-1,n-1)$-class can be pulled back to a class on the boundary of the pseudo-effective cone. Perhaps this direction is not true.

\section{Improvement}

This section aims to prove that the class of balanced hyperbolic manifolds is invariant under some specific contractions, without limitations on the higher homotopy groups any more. In our paper, a contraction to points $f:\hat{X}\dashrightarrow X$ between compact complex manifolds of dimension $n$ is a birational transform such that the center $Y$ is a collection of points. In this setting, let $\Gamma_{f}$ be its graph, and let $W$ be the desingularization of the closure $\bar{\Gamma}_{f}$. Let $l_{1}:\Gamma_{f}\rightarrow\hat{X}$ and $l_{2}:\Gamma_{f}\rightarrow X$ be the natural projections. Let $p_{1}:W\rightarrow\hat{X}$ and $p_{2}:W\rightarrow X$ be the natural morphisms.

The proof utilizes a modification of the topological techniques developed in \cite{BKS24} and \cite{BP21}.

\subsection{Topological preparation II}
We first prove a variant of Proposition 2.3 of \cite{BP21}.

\begin{lemma}\label{l41}
Let $f:\hat{X}\dashrightarrow X$ be a contraction to points between compact complex manifolds of dimension $n$. Then for $1\leqslant i\leqslant 2n-2$, $(f|_{\hat{X}\setminus E})_\ast:\pi_{i}(\hat{X}\setminus E)\rightarrow\pi_i(X)$ are isomorphisms.
\begin{proof}
Decompose $Y$ as $Y=\cup^l_{k=1} \{p_k\}$. Let $\iota:X\setminus\{p_1\}\rightarrow X$ be the natural inclusion. Take a neighbourhood $V$, which is homeomorphic to a $2n$-cell, of $\{p_{1}\}$. Since $(X,X\setminus\{p_1\})$ is homotopic to the pair $(X,X\setminus V)$, we can instead consider the long exact sequence as follows:
\begin{equation*}
	\cdots\rightarrow\pi_{k+1}(X,X\setminus V)\rightarrow\pi_{k}(X\setminus V)\overset{\iota_\ast}{\rightarrow}\pi_{k}(X)\rightarrow
	\pi_{k}(X,X\setminus V)\rightarrow\cdots.
\end{equation*}
Note $\pi_{k}(X,X\setminus V)=0$ when $k\leqslant 2n-1$. For $1\leqslant k\leqslant 2n-2$, $\iota$ induces following isomorphisms 
\begin{equation*}
    \iota_\ast:\pi_k(X\setminus\{p_1\})\rightarrow\pi_k(X).
\end{equation*}
Inductively, we conclude that the inclusion $j:X\setminus Y\rightarrow X$ induces isomorphisms 
\[
j_\ast:\pi_k(X\setminus Y)\rightarrow\pi_k(X)\quad\textrm{for }1\leqslant k \leqslant 2n-2.
\] 
Then as $f|_{\hat{X}\setminus E}:\hat{X}\setminus E\rightarrow X\setminus Y$ is a biholomorphic map, we obtain the desired result.
\end{proof}
\end{lemma}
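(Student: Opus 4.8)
The plan is to transport the statement onto the Zariski open set $X\setminus Y$ and reduce everything to the topology of the inclusion $j\colon X\setminus Y\hookrightarrow X$. Since $f|_{\hat X\setminus E}\colon\hat X\setminus E\to X\setminus Y$ is a biholomorphism it induces isomorphisms on all homotopy groups, so in each degree $(f|_{\hat X\setminus E})_{\ast}$ is the composite of $j_{\ast}$ with an isomorphism; hence it is an isomorphism for $1\le i\le 2n-2$ precisely when $j_{\ast}\colon\pi_i(X\setminus Y)\to\pi_i(X)$ is. Thus it suffices to show that deleting the finitely many points of $Y$ does not change $\pi_i$ in the stated range.

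I would do this one point at a time. Writing $Y=\{p_1,\dots,p_l\}$ and using the chain $X\supseteq X\setminus\{p_1\}\supseteq\cdots\supseteq X\setminus Y$, it is enough to prove that, for a single point $p$ on a compact complex $n$-fold $X$, the inclusion $X\setminus\{p\}\hookrightarrow X$ is $(2n-1)$-connected, i.e.\ $\pi_k(X,X\setminus\{p\})=0$ for $k\le 2n-1$. The long exact sequence of the pair then makes $\pi_k(X\setminus\{p\})\to\pi_k(X)$ an isomorphism for $k\le 2n-2$ (and an epimorphism at $k=2n-1$), and iterating over $p_1,\dots,p_l$ yields $\pi_i(X\setminus Y)\cong\pi_i(X)$ for $1\le i\le 2n-2$. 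When $n=1$ this range is empty, so we may assume $n\ge 2$; then $\{p\}$ has real codimension $2n\ge 4$, so in particular $\pi_1$ is unaffected, and one only has to run the induction with a single fixed basepoint inside $X\setminus Y$.

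To get the connectivity statement I would use a transversality (general position) argument: given any map $g\colon(D^k,\partial D^k)\to(X,X\setminus\{p\})$ with $k\le 2n-1$, homotope $g$ rel $\partial D^k$ to be smooth and transverse to the point $p$; since $k<2n=\textrm{codim}\{p\}$, transversality forces $g^{-1}(p)=\varnothing$, so the homotoped map lands in $X\setminus\{p\}$ and represents $0$ in $\pi_k(X,X\setminus\{p\})$. (Equivalently one may take a coordinate ball $V$ around $p$, note that $(X,X\setminus\{p\})$ is homotopy equivalent to $(X,X\setminus V)$ by pushing the punctured ball onto $\partial V\cong S^{2n-1}$, and identify the relative groups with those of the $(2n-1)$-connected pair $(V,V\setminus\{p\})\simeq(\mathrm{pt},S^{2n-1})$ — this is just the standard fact that a submanifold of real codimension $c$ has a $(c-1)$-connected complement inclusion.) Plugging $\pi_k(X,X\setminus\{p\})=0$ for $k\le 2n-1$ into
\[
\cdots\to\pi_{k+1}(X,X\setminus\{p\})\to\pi_k(X\setminus\{p\})\xrightarrow{\ \iota_{\ast}\ }\pi_k(X)\to\pi_k(X,X\setminus\{p\})\to\cdots
\]
completes this step.

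The proof needs no deep input, so the only real obstacle is technical hygiene: the naive version of this argument via ``homotopy excision'' is valid only in a range (Blakers--Massey), which is why I prefer to argue $(2n-1)$-connectivity of the pair directly by transversality, or — if a homological route is wanted — to first settle $\pi_1$ (automatic for $n\ge 2$) and then invoke the relative Hurewicz theorem together with $H_k(X,X\setminus\{p\})\cong H_k(V,V\setminus\{p\})=0$ for $k<2n$. The genuinely fiddly points are therefore the $n=1$ edge case (vacuous), the behaviour of $\pi_1$, and keeping a consistent basepoint through the $l$-fold induction; everything else is formal.
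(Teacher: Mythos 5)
Your proposal is correct and follows essentially the same route as the paper: delete the points of $Y$ one at a time, establish $\pi_k(X,X\setminus\{p\})=0$ for $k\leqslant 2n-1$ (the paper does this via the coordinate ball $V$ and the pair $(X,X\setminus V)$, which you also mention as an alternative to transversality), feed this into the long exact sequence of the pair, and finish by transporting along the biholomorphism $f|_{\hat X\setminus E}$. Your treatment of the justification of the connectivity statement and of the basepoint/edge-case hygiene is in fact slightly more careful than the paper's.
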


A simple adjustment of the proof of Lemma \ref{l41} implies the following result.
\begin{corollary}\label{c41}
Let $f:\hat{X}\dashrightarrow X$ be a birational transform between compact complex manifolds of dimension $n$, such that the center $Y$ satisfies $\dim Y=n-k-1$. Then for $1\leqslant i\leqslant 2k$, $(f|_{\hat{X}\setminus E})_\ast:\pi_{i}(\hat{X}\setminus E)\rightarrow\pi_i(X)$ are isomophisms.
\end{corollary}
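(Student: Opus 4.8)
\textbf{Proof proposal for Corollary \ref{c41}.}

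The plan is to imitate the proof of Lemma \ref{l41} almost verbatim, but to keep track of how the dimension of the center $Y$ changes the range in which the local relative homotopy groups vanish. Write $Y=\bigcup_{j=1}^{l}Y_j$ where each $Y_j$ is an irreducible component (or a stratum of a Whitney stratification of $Y$) of dimension at most $n-k-1$. The key observation is that a tubular (or conical) neighbourhood $V_j$ of $Y_j$ deformation retracts onto $Y_j$, and the pair $(X, X\setminus V_j)$ is, up to homotopy, a bundle of pairs over $Y_j$ whose fibre is $(\mathbb{R}^{2c_j}, \mathbb{R}^{2c_j}\setminus\{0\})$ with $c_j=\operatorname{codim}_{\mathbb C}Y_j\geqslant k+1$, hence fibre $2c_j$-connected in the relative sense, i.e. $\pi_m(\mathbb{R}^{2c_j}, \mathbb{R}^{2c_j}\setminus\{0\})=0$ for $m\leqslant 2c_j-1$. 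A standard fibration/spectral-sequence argument then gives $\pi_m(X, X\setminus V_j)=0$ for $m\leqslant 2c_j-1$, and since $2c_j-1\geqslant 2(k+1)-1=2k+1$, this vanishing holds in particular for all $m\leqslant 2k+1$.

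Having this, I would feed it into the long exact sequence of the pair exactly as in Lemma \ref{l41}:
\[
\cdots\to\pi_{m+1}(X,X\setminus V_j)\to\pi_m(X\setminus V_j)\xrightarrow{\iota_\ast}\pi_m(X)\to\pi_m(X,X\setminus V_j)\to\cdots.
\]
Since the outer terms vanish for $m\leqslant 2k$ (we need $\pi_{m+1}$ and $\pi_m$ of the pair, so $m+1\leqslant 2k+1$ suffices), the inclusion $X\setminus V_j\hookrightarrow X$ induces isomorphisms on $\pi_m$ for $1\leqslant m\leqslant 2k$. Removing the components of $Y$ one at a time (using that the relevant vanishing is stable under the successive removals, because each $V_j$ can be chosen disjoint from the previously removed loci up to homotopy) and composing the isomorphisms, I conclude that $j_\ast:\pi_m(X\setminus Y)\to\pi_m(X)$ is an isomorphism for $1\leqslant m\leqslant 2k$. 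Finally, since $f|_{\hat X\setminus E}:\hat X\setminus E\to X\setminus Y$ is a biholomorphism, $(f|_{\hat X\setminus E})_\ast:\pi_i(\hat X\setminus E)\to\pi_i(X)$ is an isomorphism for $1\leqslant i\leqslant 2k$, which is the claim.

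The step I expect to be the main obstacle is making the statement ``$\pi_m(X, X\setminus V)=0$ for $m\leqslant 2c-1$'' completely rigorous when $Y$ is a singular analytic subvariety of positive dimension rather than a point. For a point this is immediate because $V$ is a $2n$-cell and $X\setminus V\hookrightarrow X$ is $(2n-1)$-connected. In the general case one must choose $V$ to be (the interior of) a closed neighbourhood admitting a deformation retraction onto $Y$ together with control on the ``link'' structure — e.g. via a Whitney stratification and the existence of regular neighbourhoods, or by embedding into a smooth ambient space and using a resolution — and then argue that the pair $(V, V\setminus Y)$ is $(2(k+1)-1)$-connected because at worst it looks fibrewise like a punctured ball of real dimension $2(k+1)$. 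Once this local connectivity input is granted, everything else is the mechanical long-exact-sequence bookkeeping already carried out in Lemma \ref{l41}. A clean way to package this, which I would adopt, is to invoke the fact (standard in stratified Morse theory) that for a complex analytic $Y\subseteq X$ of complex dimension $d=n-k-1$, the pair $(X, X\setminus Y)$ is $2(k+1)-1=2k+1$ connected, so that $\pi_m(X\setminus Y)\to\pi_m(X)$ is iso for $m\leqslant 2k$ and epi for $m=2k+1$; this is precisely the homotopical analogue of the fact that $Y$ has real codimension $\geqslant 2(k+1)$ and removing it does not affect low-dimensional homotopy.
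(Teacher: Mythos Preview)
Your proposal is correct and follows exactly the approach the paper intends: the paper's own ``proof'' is merely the sentence ``A simple adjustment of the proof of Lemma \ref{l41} implies the following result,'' and your write-up is precisely that adjustment, with the codimension bookkeeping made explicit. If anything, you supply more detail than the paper does, correctly flagging that the only nontrivial point is the $(2k+1)$-connectedness of the pair $(X,X\setminus Y)$ when $Y$ has complex codimension $\geqslant k+1$, which is indeed a standard fact.
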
 
 
Now let $X$ be a smooth manifold (or more general a path-connected simplicial complex) of real dimension $2n$, and let $G=\pi_{1}(X)$. We can construct a model analogy to the classifying space $BG$ as follows. Consider the $(2n-3)$-th term $U$ of the Postnikov tower of $X$ by gluing cells of dimension $2n-1$ or higher to kill all the $i$-th homotopy groups for $i\geqslant 2n-2$. As we can furthermore attach cells of dimension $3$ or higher to obtain $BG$, $U$ is seen as the subcomplex of $BG$. Moreover, the composition of the inclusions $c_{X,n-1}:X\hookrightarrow U$ and $U\hookrightarrow BG$ is just the classifying map of the universal cover of $X$. By construction, the $(2n-2)$-skeleton of $U$ is the same as $X$. So we have  

\begin{lemma}\label{l42}
The homomorphism $c_{X,n-1}^\ast:H^{2n-2}(U,\mathbb{R})\rightarrow H^{2n-2}(X,\mathbb{R})$ is injective, and 
\begin{equation*}
	c_{X,n-1}^\ast(H^{2n-2}(U,\mathbb{R}))=V^{2n-2}_{\textrm{asph}}(X).
\end{equation*}
\begin{proof}
As $\pi_{2n-2}(U)=0$, it is clear that $V^{2n-2}_{\textrm{asph}}(U)=H^{2n-2}(U,\mathbb{R})$. Hence by definition
\[
c_{X,n-1}^{\ast}(H^{2n-2}(U,\mathbb{R}))\subseteq V^{2n-2}_{\textrm{asph}}(X).
\] 
Now consider the the long exact sequence of cohomology associated to $(X,\,U)$: 
\begin{equation*}
\cdots\rightarrow H^{2n-2}(U,X;\mathbb{R})\rightarrow H^{2n-2}(U,\mathbb{R})\xrightarrow{c_{X,n-1}^\ast} H^{2n-2}(X,\mathbb{R})\xrightarrow{\partial} H^{2n-1}(U,X;\mathbb{R})\rightarrow\cdots.
\end{equation*}
By the universal coefficient theorem for relative cohomology and the fact that $X$ and $U$ share the same $(2n-2)$-skeleton, the relative cohomology group $H^{2n-2}(U,X;\mathbb{R})$ vanishes and $c_{X,n-1}^\ast$ is injective.
 
It is left to show $V^{2n-2}_{\textrm{asph}}(X)\subseteq \text{Ker}\,\partial$. Let $[\omega]\in V^{2n-2}_{\textrm{asph}}(X)$. Then $\partial [\omega]\in H_{2n-1}(U,X;\mathbb{R})^\ast$ acts on a relative $(2n-1)$-cycle $\gamma+C_{2n-1}(X)$ by
\begin{equation*}
	\partial [\omega](\gamma+C_{2n-1}(X))=\omega(\partial_{2n-1}\gamma),
\end{equation*}
where $\partial_{2n-1}$ is boundary operator. Notice that $\partial_{2n-1}\gamma\in Z_{2n-2}(X)$ which, if non-trivial, comes from $(2n-1)$-cells attached to $X$. Thus $\partial_{2n-1}\gamma$ is a linear combination of the images of $S^{2n-2}$. As $\omega$ is an aspherical class, it follows that $\omega(\partial_{2n-1}\gamma)=0$. Namely $[\omega]\in\text{Ker}\,\partial$.
\end{proof}
\end{lemma}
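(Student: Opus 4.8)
The plan is to deduce both assertions from the long exact cohomology sequence of the pair $(U,X)$ over $\mathbb{R}$, using two structural properties of the Postnikov stage $U$: it is built from $X$ by attaching cells of real dimension $\geqslant 2n-1$ only, so $U$ and $X$ have the same $(2n-2)$-skeleton; and $\pi_{2n-2}(U)=0$ by construction.

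I would first dispose of the inclusion $c_{X,n-1}^{\ast}(H^{2n-2}(U,\mathbb{R}))\subseteq V^{2n-2}_{\textrm{asph}}(X)$: since $\pi_{2n-2}(U)=0$, every continuous map $S^{2n-2}\to U$ is null-homotopic, so every class in $H^{2n-2}(U,\mathbb{R})$ is aspherical, and asphericity is manifestly stable under pullback along any continuous map. For injectivity, the shared $(2n-2)$-skeleton forces the relative cellular chain groups $C_{j}(U,X)$ to vanish for $j\leqslant 2n-2$, hence $H^{2n-2}(U,X;\mathbb{R})=0$, and exactness of
\[
H^{2n-2}(U,X;\mathbb{R})\to H^{2n-2}(U;\mathbb{R})\xrightarrow{c_{X,n-1}^{\ast}} H^{2n-2}(X;\mathbb{R})\xrightarrow{\partial} H^{2n-1}(U,X;\mathbb{R})
\]
gives injectivity at once; moreover the image of $c_{X,n-1}^{\ast}$ equals $\ker\partial$, so together with the first inclusion it remains only to prove the converse inclusion $V^{2n-2}_{\textrm{asph}}(X)\subseteq\ker\partial$.

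This last inclusion is the substantive point. Since the coefficients are real, $H^{2n-1}(U,X;\mathbb{R})\cong H_{2n-1}(U,X;\mathbb{R})^{\ast}$, so it suffices to evaluate $\partial[\omega]$ on an arbitrary relative $(2n-1)$-cycle, represented by a chain $\gamma$ in $U$ with $\partial_{2n-1}\gamma\in C_{2n-2}(X)$; the standard description of the connecting map gives $\partial[\omega](\gamma+C_{2n-1}(X))=\omega(\partial_{2n-1}\gamma)$. Splitting $\gamma=\gamma_X+\gamma'$ with $\gamma_X\in C_{2n-1}(X)$ and $\gamma'$ supported on the attached $(2n-1)$-cells, the term $\omega(\partial_{2n-1}\gamma_X)$ vanishes because $\omega$ is a cocycle on $X$, while $\partial_{2n-1}\gamma'$ is a linear combination of the images of the attaching spheres $S^{2n-2}$ of those cells, whence $\omega(\partial_{2n-1}\gamma')=0$ as $[\omega]$ is aspherical; therefore $\omega(\partial_{2n-1}\gamma)=0$, i.e. $\partial[\omega]=0$.

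The main obstacle, modest as it is, is making the splitting $\gamma=\gamma_X+\gamma'$ and the identification of $\partial_{2n-1}\gamma'$ with a combination of spherical $(2n-2)$-cycles precise: one must use that every cell adjoined to $X$ has dimension $\geqslant 2n-1$, that the attaching maps of the $(2n-1)$-cells land in the common $(2n-2)$-skeleton of $U$ and $X$, and that these are the only cells whose boundary can contribute modulo $C_{2n-2}(X)$. With that in hand the rest is formal manipulation of the pair sequence and the universal coefficient theorem.
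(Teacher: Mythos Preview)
Your proposal is correct and follows essentially the same route as the paper: both use the long exact sequence of the pair $(U,X)$, the vanishing of $H^{2n-2}(U,X;\mathbb{R})$ from the shared $(2n-2)$-skeleton for injectivity, and the identification of $\partial_{2n-1}\gamma$ with a combination of attaching spheres to obtain $V^{2n-2}_{\mathrm{asph}}(X)\subseteq\ker\partial$. Your splitting $\gamma=\gamma_X+\gamma'$ and the explicit use of $\omega$ being a cocycle to kill $\omega(\partial_{2n-1}\gamma_X)$ is a welcome clarification of a step the paper leaves implicit, but the argument is otherwise identical.
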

If we instead construct the $(2k-1)$-th term $U_{k}$ of the Postnikov tower of $X$ by gluing cells of dimension $2k+1$ or higher to kill all the $i$-th homotopy groups for $i\geqslant 2k$, and let $c_{X,k}:X\hookrightarrow U_{k}$ be the inclusion, then a similar argument implies that
\begin{corollary}\label{c42}
The homomorphism $c_{X,k}^\ast:H^{2k}(U_{k},\mathbb{R})\rightarrow H^{2k}(X,\mathbb{R})$ is injective, and 
\begin{equation*}
	c_{X,k}^\ast(H^{2k}(U_{k},\mathbb{R}))=V^{2k}_{\textrm{asph}}(X).
\end{equation*}
\end{corollary}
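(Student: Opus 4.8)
The plan is to reproduce the proof of Lemma~\ref{l42} almost word for word, with $2n-2$ replaced by $2k$ and $U$ by $U_{k}$; the only features of the space used in that argument are that $U_{k}$ is obtained from $X$ by attaching cells of dimension $\geqslant 2k+1$ (so the $2k$-skeletons of $X$ and $U_{k}$ coincide) and that $\pi_{2k}(U_{k})=0$, both of which hold by the construction of the $(2k-1)$-st Postnikov section.

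First I would note that, since $\pi_{2k}(U_{k})=0$, every map $S^{2k}\to U_{k}$ is null-homotopic, so every class in $H^{2k}(U_{k},\mathbb{R})$ is aspherical, i.e. $V^{2k}_{\textrm{asph}}(U_{k})=H^{2k}(U_{k},\mathbb{R})$. As the pull-back of an aspherical class along any continuous map is aspherical, this immediately yields $c_{X,k}^{\ast}(H^{2k}(U_{k},\mathbb{R}))\subseteq V^{2k}_{\textrm{asph}}(X)$. Next I would write the long exact cohomology sequence of the pair $(U_{k},X)$,
\[
\cdots\rightarrow H^{2k}(U_{k},X;\mathbb{R})\rightarrow H^{2k}(U_{k},\mathbb{R})\xrightarrow{c_{X,k}^{\ast}} H^{2k}(X,\mathbb{R})\xrightarrow{\partial} H^{2k+1}(U_{k},X;\mathbb{R})\rightarrow\cdots,
\]
and invoke that $X$ and $U_{k}$ share the same $2k$-skeleton: the relative cellular chain complex $C_{\bullet}(U_{k},X)$ is supported in degrees $\geqslant 2k+1$, so by the universal coefficient theorem $H^{2k}(U_{k},X;\mathbb{R})=0$, and hence $c_{X,k}^{\ast}$ is injective.

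For the surjectivity onto $V^{2k}_{\textrm{asph}}(X)$, by exactness it is enough to check $V^{2k}_{\textrm{asph}}(X)\subseteq\ker\partial$. Given $[\omega]\in V^{2k}_{\textrm{asph}}(X)$, the class $\partial[\omega]$, regarded in $H_{2k+1}(U_{k},X;\mathbb{R})^{\ast}$ through the universal coefficient theorem, sends a relative $(2k+1)$-cycle $\gamma+C_{2k+1}(X)$ to $\omega(\partial_{2k+1}\gamma)$. Here $\partial_{2k+1}\gamma\in Z_{2k}(X)$, if nonzero, comes from the attaching maps of the added $(2k+1)$-cells, hence is a linear combination of images of $S^{2k}$ in $X$; since $[\omega]$ is aspherical, $\omega$ vanishes on each of these, so $\omega(\partial_{2k+1}\gamma)=0$ and $[\omega]\in\ker\partial$. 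Combining the two inclusions gives $c_{X,k}^{\ast}(H^{2k}(U_{k},\mathbb{R}))=V^{2k}_{\textrm{asph}}(X)$.

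There is no essential obstacle beyond what was already handled in Lemma~\ref{l42}; the only point requiring care is verifying that the Postnikov-type construction of $U_{k}$ attaches only cells of dimension $\geqslant 2k+1$ while killing $\pi_{i}$ for all $i\geqslant 2k$, so that simultaneously the $2k$-skeletons of $X$ and $U_{k}$ agree and $\pi_{2k}(U_{k})=0$. This is the standard inductive cell-attachment model of a Postnikov section, and once it is recorded the three ingredients above assemble exactly as in the case $k=n-1$.
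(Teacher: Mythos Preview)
Your proposal is correct and is exactly the approach the paper intends: the paper does not give a separate proof of Corollary~\ref{c42} but simply says ``a similar argument implies that'' after Lemma~\ref{l42}, and your write-up is precisely that similar argument with $2n-2$ replaced by $2k$ and $U$ by $U_{k}$.
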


If $X$ is a simplicial complex which is not necessarily finite, we can define the subspace $V^{2k}_{\textrm{hyp}}(X)$ of the real singular cohomology group $H^{2k}(X,\mathbb{R})$ to be the set of $2k$-th cohomology classes whose pull-back to any finite simplicial complex is hyperbolic. We show that $U$ possesses the same universal property for the hyperbolic $(2n-2)$-classes as $BG$.

\begin{proposition}\label{p41}
Let $X$ be a path-connected simplicial complex, and let $Y$ be a finite simplicial complex. Let $f:Y\rightarrow U$ be a continuous map, and $[\omega]\in H^{2n-2}(U,\mathbb{R})$. If $c_{X,n-1}^\ast [\omega]\in V^{2n-2}_{\textrm{hyp}}(X)$, then $f^\ast [\omega]\in V^{2n-2}_{\textrm{hyp}}(Y)$.
\begin{proof}
Take a subcomplex $X'$ of $U$, so that both $X\subseteq X'$ and $f(Y)\subseteq X'$. Remember the hyperbolicity is preserved under the pull-back of a continuous map. It is sufficient to prove that $[\omega|_{X'}]$ is a hyperbolic class on $X^{\prime}$. By definition, we need to show that for any continuous map $g:Z\rightarrow X^{\prime}$ from an arbitrary finite simplicial complex $Z$, $[\omega|_{g(Z)}]$ is hyperbolic.
	
When $g(Z)\subseteq X$, it is clear. Otherwise, $g(Z)$ is obtained by gluing finite cells of dimension $(2n-1)$ or higher to $A:=X\cap g(Z)$. Assume first the number of glued cells equals one, and denote the gluing map by
\[
h:S^{k-1}\rightarrow A\quad\textrm{with }k\geqslant 2n-1.
\] 
Then $g(Z)=A\cup_h D^k$. Let $\pi:\tilde{A}\rightarrow A$ be the universal cover of $A$, then the universal cover of $g(Z)$ is just $\tilde{A}'=\tilde{A}\cup_{h\times \pi}(D^k\times \pi_1(A))$. Let $\pi:\tilde{A}^{\prime}\rightarrow g(Z)$ be the natural projection by abuse of the notation. Choose a representative $\omega$ of $[\omega]$ in $\Omega^{2n-2}(U)$. As $\omega$ is hyperbolic on $A$, by definition, there is a bounded $(2n-3)$-form $\alpha$ on $\tilde{A}$, so that $d\alpha =\pi^\ast(\omega|_A)$. Now consider $\omega|_{D^k}$. As $H^{2n-2}_{\textrm{dR}}(D^k)=0$ for $k\geqslant 2n-1$, there is an $\alpha'\in\Omega^{2n-3}(D^k)$, such that $d\alpha'=\omega|_{D^k}$. In particular, $\alpha^{\prime}$ is also bounded since $D^{k}$ is compact. Let $h_0$ (resp. $\mathbb{D}$) be an arbitrary lift of $h$ (resp. $D^k$). Denote by $S^{k-1}_{0}$ the boundary of one sheet $D^{k}_{0}$ of $\mathbb{D}$. Then 
\begin{equation*}
d(h^\ast_0\alpha|_{S^{k-1}_{0}})=(h^\ast_0 d\alpha)|_{S^{k-1}_{0}}=h^\ast_0(\pi^{\ast}\omega)|_{S^{k-1}_{0}},
\end{equation*}  
where the commutativity of $d$ and $h^{\ast}_{0}$ is ensured by de Rham--Thom theorem for simplicial complexes. Thus 
\begin{equation*}
d((h^\ast_0\alpha-h^{\ast}_{0}\pi^\ast\alpha')|_{S^{k-1}_{0}})=h^\ast_0(\pi^\ast\omega)|_{S^{k-1}_{0}}-h^\ast_0(\pi^\ast\omega)|_{S^{k-1}_{0}}=0.
\end{equation*}
It means that $(h^\ast_{0}\alpha-h^{\ast}_{0}\pi^\ast\alpha')|_{S^{k-1}_{0}}\in \text{Ker}\,d$. As $k\geqslant 2n-1$, $H^{2n-3}_{\textrm{dR}}(S^{k-1}_{0})=0$. Thus there exists a $\xi_{0}\in\Omega^{2n-4}(S^{k-1}_{0})$, such that $d\xi_{0}=(h^\ast_{0}\alpha-h^{\ast}_{0}\pi^\ast\alpha')|_{S^{k-1}_{0}}$. The boundedness of $d\xi_{0}$ is ensured by the boundedness of $\alpha$ and $\alpha'$. Pick a smooth extension $\xi'_{0}$ of $\xi_{0}$ on $D^k_{0}$, so that $d\xi'_{0}$ is still bounded. Now we can patch $\alpha$ and $\pi^{\ast}\alpha^{\prime}+d\xi^{\prime}_{0}$ together to obtain an $\hat{\alpha}_{0}$ on $\tilde{A}\cup_{h_{0}}D^{k}_{0}$ such that $\pi^{\ast}\omega=d\hat{\alpha}_{0}$ restricted on this space. Following this streamline, we obtain an $\hat{\alpha}$ on the whole $\tilde{A}^{\prime}$ such that $\pi^{\ast}\omega=d\hat{\alpha}$. Since $\hat{\alpha}$ is bounded, $[\omega|_{g(Z)}]$ is hyperbolic.
	 
When the number of glued cells is larger, we inductively obtain the desired result.
\end{proof}
\end{proposition}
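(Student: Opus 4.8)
The plan is to convert the statement into a gluing problem on the universal cover, exploiting that $U$ is built from $X$ by attaching cells only in dimensions $\geq 2n-1$ --- precisely the range in which a $(2n-2)$-form is automatically exact. First I would fix a subcomplex $X'\subseteq U$ with $X\subseteq X'$ and $f(Y)\subseteq X'$; since hyperbolicity is preserved under pull-back by continuous maps and $f$ factors through $X'$, it suffices to show that $[\omega|_{X'}]$ is hyperbolic on $X'$. By the definition of $V^{2n-2}_{\textrm{hyp}}$ on a possibly infinite complex, this amounts to showing that for every continuous map $g:Z\to X'$ from a finite simplicial complex $Z$ the class $[\omega|_{g(Z)}]$ is hyperbolic. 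After simplicial approximation I may assume $g$ factors through its image, so writing $A:=X\cap g(Z)$, a finite subcomplex of $X$, the finite complex $g(Z)$ is obtained from $A$ by attaching finitely many cells of dimension $\geq 2n-1$, and $[\omega|_A]$ is hyperbolic on $A$ by hypothesis, since $A\hookrightarrow X$.

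The core is an inductive gluing argument over these cells, so I reduce to attaching a single cell $D^k$, $k\geq 2n-1$, along $h:S^{k-1}\to A$. Since the attached cell has dimension $\geq 3$ it does not change $\pi_1$, so the universal cover of $g(Z)=A\cup_h D^k$ is $\tilde A$ with one copy $D^k_\gamma$ of the cell attached for each $\gamma\in\pi_1(A)$ along a lift of $h$ (a genuine lift, as $S^{k-1}$ is simply connected). On $\tilde A$ we have a bounded $(2n-3)$-form $\alpha$ with $d\alpha=\pi^\ast(\omega|_A)$; on the compact cell $\omega|_{D^k}$ is exact because $H^{2n-2}_{\textrm{dR}}(D^k)=0$, say $\omega|_{D^k}=d\alpha'$ with $\alpha'$ bounded. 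On a lifted boundary sphere $S^{k-1}_0$ the form $h_0^\ast\alpha-\alpha'|_{S^{k-1}_0}$ is closed, and since $k-1\geq 2n-2>2n-3$ and $H^{2n-3}_{\textrm{dR}}(S^{k-1}_0)=0$ it equals $d\xi_0$ for a bounded $\xi_0$; extending $\xi_0$ to a form $\xi'_0$ on $D^k_0$ with $d\xi'_0$ bounded, the form $\alpha'+d\xi'_0$ is a bounded primitive of $\pi^\ast\omega$ on $D^k_0$ agreeing with $h_0^\ast\alpha$ on $S^{k-1}_0$, hence patching with $\alpha$. Carrying this out simultaneously on all deck-translates --- which works by equivariance of the pulled-back metric, giving one uniform $L^\infty$ bound --- produces a bounded primitive of $\pi^\ast\omega$ on the whole universal cover of $g(Z)$, so $[\omega|_{g(Z)}]$ is hyperbolic. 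The case of several cells follows by induction, ordering them by dimension so that every attaching sphere lies in the part already treated.

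The step I expect to be the main obstacle is the bookkeeping in this gluing: arranging the correction terms $\xi_0,\xi'_0$ so that the patched primitive is globally defined on $\widetilde{g(Z)}$ with a single uniform bound over all $\pi_1(A)$-translates of each attached cell, and checking that every commutation of $d$ with the lifted attaching maps is legitimate --- here one should invoke the de Rham--Thom formalism for simplicial complexes rather than ordinary smooth calculus. Everything else, namely the reductions of the first paragraph and the induction on the number of cells, is essentially formal once the definition of hyperbolicity on infinite complexes is unwound.
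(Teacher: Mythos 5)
Your proposal follows the paper's own proof essentially step for step: the reduction to a subcomplex $X'$ and to finite images $g(Z)$, the identification of $A=X\cap g(Z)$ with cells of dimension $\geqslant 2n-1$ attached, the single-cell gluing via the primitives $\alpha$, $\alpha'$, $\xi_0$, $\xi'_0$ using $H^{2n-2}_{\textrm{dR}}(D^k)=0$ and $H^{2n-3}_{\textrm{dR}}(S^{k-1})=0$, and the induction on the number of cells. The point you flag as the main obstacle (a uniform bound over all deck translates of each attached cell, and the de Rham--Thom formalism for commuting $d$ with the lifted attaching maps) is exactly the part the paper also treats briefly, so the approaches coincide.
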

It naturally extends to the following form.
\begin{corollary}\label{c43}
Let $X$ be a path-connected simplicial complex, and let $Y$ be a finite simplicial complex. Let $f:Y\rightarrow U_{k}$ be a continuous map, and $[\omega]\in H^{2k}(U_{k},\mathbb{R})$. If $c_{X,k}^\ast [\omega]\in V^{2k}_{\textrm{hyp}}(X)$, then $f^\ast [\omega]\in V^{2k}_{\textrm{hyp}}(Y)$.
\end{corollary}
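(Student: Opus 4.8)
The plan is to run the proof of Proposition \ref{p41} essentially verbatim, shifting every degree appearing there down so that the cohomological degree $2n-2$ becomes $2k$, the primitive form $\alpha$ has degree $2k-1$, the secondary primitive $\xi_0$ has degree $2k-2$, and the cell-dimension threshold $2n-1$ becomes $2k+1$. With $k=n-1$ these substitutions recover exactly the numbers of Proposition \ref{p41}, and the threshold $\geqslant 2k+1$ is precisely the dimension bound built into the construction of $U_k$.

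Concretely, I would first choose a subcomplex $X'$ of $U_k$ containing both $X$ and $f(Y)$; since hyperbolicity is preserved under pull-back by continuous maps and every continuous map is homotopic to a simplicial one, it suffices to prove that $[\omega|_{X'}]$ is hyperbolic, i.e. that $[\omega|_{g(Z)}]$ is hyperbolic for every continuous $g:Z\to X'$ with $Z$ a finite simplicial complex. If $g(Z)\subseteq X$ this is immediate from $c_{X,k}^\ast[\omega]\in V^{2k}_{\textrm{hyp}}(X)$; otherwise $g(Z)$ is obtained from $A:=X\cap g(Z)$ by attaching finitely many cells of dimension $\geqslant 2k+1$, and by induction it suffices to treat a single such cell, $g(Z)=A\cup_h D^m$ with $h:S^{m-1}\to A$ and $m\geqslant 2k+1$. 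Passing to the universal cover $\tilde{A}'=\tilde{A}\cup_{h\times\pi}(D^m\times\pi_1(A))$, hyperbolicity of $\omega$ on $A$ yields a bounded $(2k-1)$-form $\alpha$ on $\tilde{A}$ with $d\alpha=\pi^\ast(\omega|_A)$, contractibility of $D^m$ yields a $(2k-1)$-form $\alpha'$ on $D^m$ (bounded, as $D^m$ is compact) with $d\alpha'=\omega|_{D^m}$, and on the boundary $S^{m-1}_0$ of a lift $D^m_0$ the $(2k-1)$-form $(h_0^\ast\alpha-h_0^\ast\pi^\ast\alpha')|_{S^{m-1}_0}$ is $d$-closed, commuting $d$ with $h_0^\ast$ via the de Rham--Thom theorem for simplicial complexes. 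Since $m\geqslant 2k+1$ forces $2k-1<m-1$, we have $H^{2k-1}_{\textrm{dR}}(S^{m-1}_0)=0$, so this form equals $d\xi_0$ for some $\xi_0\in\Omega^{2k-2}(S^{m-1}_0)$ with $d\xi_0$ bounded; extending $\xi_0$ to a form $\xi'_0$ on $D^m_0$ with $d\xi'_0$ still bounded, one patches $\alpha$ and $\pi^\ast\alpha'+d\xi'_0$ into a bounded primitive of $\pi^\ast\omega$ on $\tilde{A}\cup_{h_0}D^m_0$, and running over all sheets and all glued cells produces a bounded primitive of $\pi^\ast\omega$ on all of $\tilde{A}'$. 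Hence $[\omega|_{g(Z)}]$ is hyperbolic, which finishes the argument.

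There is no genuinely new difficulty beyond Proposition \ref{p41}: the only point deserving care is the degree bookkeeping, and in particular the observation that the cell-dimension threshold $\geqslant 2k+1$ used to build $U_k$ is exactly what forces the two de Rham groups $H^{2k}_{\textrm{dR}}(D^m)$ and $H^{2k-1}_{\textrm{dR}}(S^{m-1})$ entering the argument to vanish; any weaker bound would break the inductive patching. One may alternatively phrase the reduction using Corollary \ref{c42} in place of Lemma \ref{l42}, exploiting that $U_k$ has the same universal property for hyperbolic $2k$-classes as $BG$, which is again formally identical to the case $k=n-1$.
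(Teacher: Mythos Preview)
Your proposal is correct and is exactly what the paper intends: the paper gives no separate proof of this corollary, merely stating that Proposition \ref{p41} ``naturally extends to the following form,'' and your argument is precisely that extension, with the cell-dimension threshold $\geqslant 2k+1$ (from the construction of $U_k$) playing the role that $\geqslant 2n-1$ plays for $U$. Your degree bookkeeping is right, in particular the vanishing of $H^{2k-1}_{\textrm{dR}}(S^{m-1})$ for $m\geqslant 2k+1$.
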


Now we have
\begin{proposition}\label{p42}
Let $X$ be a path-connected simplicial complex, then 
\[
c_{X,n-1}^{\ast}(V^{2n-2}_{\textrm{hyp}}(U))=V^{2n-2}_{\textrm{hyp}}(X).
\]
\begin{proof}
By definition, $c_{X,n-1}^{\ast}(V^{2n-2}_{\textrm{hyp}}(U))\subseteq V^{2n-2}_{\textrm{hyp}}(X)$. On the other hand, if $[\omega]\in V^{2n-2}_{\textrm{hyp}}(X)$, then naturally $[\omega]\in V^{2n-2}_{\textrm{asph}}(X)$. By Lemma \ref{l42}, there exists a class $[\alpha]\in H^{2n-2}(U,\mathbb{R})$, so that $[\omega]=c_{X,n-1}^{\ast}[\alpha]$. Then by Proposition \ref{p41} the pull-back of $[\alpha]$ to any finite simplicial complex is hyperbolic. Therefore $[\alpha]\in V^{2n-2}_{\textrm{hyp}}(U)$.
\end{proof}
\end{proposition}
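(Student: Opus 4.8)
The plan is to establish the two inclusions separately; the forward one is purely formal, and the reverse one assembles Lemma~\ref{l42} and Proposition~\ref{p41}.

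For the inclusion $c_{X,n-1}^{\ast}(V^{2n-2}_{\textrm{hyp}}(U))\subseteq V^{2n-2}_{\textrm{hyp}}(X)$ I would argue straight from the definition of $V^{2n-2}_{\textrm{hyp}}$ on an arbitrary (possibly infinite) simplicial complex. Given $[\alpha]\in V^{2n-2}_{\textrm{hyp}}(U)$ and any continuous map $g\colon Z\to X$ from a finite simplicial complex $Z$, the composite $c_{X,n-1}\circ g\colon Z\to U$ pulls $[\alpha]$ back to a hyperbolic class on $Z$; since $g^{\ast}\big(c_{X,n-1}^{\ast}[\alpha]\big)=(c_{X,n-1}\circ g)^{\ast}[\alpha]$ and $Z$, $g$ were arbitrary, this is exactly the assertion that $c_{X,n-1}^{\ast}[\alpha]\in V^{2n-2}_{\textrm{hyp}}(X)$.

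For the reverse inclusion I would start with $[\omega]\in V^{2n-2}_{\textrm{hyp}}(X)$ and proceed in three steps. First, note that $[\omega]$ is aspherical: any continuous map $S^{2n-2}\to X$ factors through the universal cover because $2n-2\geq 2$, so the uniformly bounded primitive of the pullback to $\tilde{X}$ restricts to the compact sphere, forcing the integral of $[\omega]$ over $S^{2n-2}$ to vanish; thus $[\omega]\in V^{2n-2}_{\textrm{asph}}(X)$. Second, invoke the surjectivity part of Lemma~\ref{l42}, namely $c_{X,n-1}^{\ast}\big(H^{2n-2}(U,\mathbb{R})\big)=V^{2n-2}_{\textrm{asph}}(X)$, to obtain a class $[\alpha]\in H^{2n-2}(U,\mathbb{R})$ with $c_{X,n-1}^{\ast}[\alpha]=[\omega]$ (the injectivity in Lemma~\ref{l42} even renders $[\alpha]$ unique, though that is not needed). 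Third, verify $[\alpha]\in V^{2n-2}_{\textrm{hyp}}(U)$: for an arbitrary finite simplicial complex $Y$ and continuous map $f\colon Y\to U$, the hypothesis $c_{X,n-1}^{\ast}[\alpha]=[\omega]\in V^{2n-2}_{\textrm{hyp}}(X)$ lets Proposition~\ref{p41} conclude $f^{\ast}[\alpha]\in V^{2n-2}_{\textrm{hyp}}(Y)$, which is precisely the property defining $V^{2n-2}_{\textrm{hyp}}(U)$.

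The substantive work has already been carried out elsewhere: Proposition~\ref{p41} is the cell-attaching argument showing that hyperbolicity of a class on $X$ survives the gluing of cells of dimension $\geq 2n-1$, and Lemma~\ref{l42} identifies the image of $c_{X,n-1}^{\ast}$ with the aspherical classes using the fact that $X$ and $U$ have the same $(2n-2)$-skeleton. Given these two inputs, Proposition~\ref{p42} is a short bookkeeping argument; the only point that calls for a little care is the implication hyperbolic $\Rightarrow$ aspherical when $X$ is not finite, which is handled by the compactness of $S^{2n-2}$ together with the finite-complex definition of $V^{2n-2}_{\textrm{hyp}}$.
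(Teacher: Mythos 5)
Your proposal is correct and follows essentially the same route as the paper's proof: the forward inclusion is definitional, and the reverse inclusion combines the implication hyperbolic~$\Rightarrow$~aspherical with the surjectivity statement of Lemma~\ref{l42} and the universal property of Proposition~\ref{p41}. The extra care you take with the hyperbolic-implies-aspherical step for a possibly infinite complex is a reasonable elaboration of a point the paper leaves implicit, but it does not change the argument.
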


\begin{corollary}\label{c44}
Let $f:W\rightarrow X$ be a continuous map between two path-connected simplicial complexes, such that $f_{\ast}:\pi_{i}(W)\rightarrow\pi_{i}(X)$ is isomorphic for $1\leqslant i\leqslant 2n-3$. Then
\[
f^{\ast}(V^{2n-2}_{\textrm{hyp}}(X))=V^{2n-2}_{\textrm{hyp}}(W).
\]
\begin{proof}
Let $\pi:\tilde{X}\rightarrow X$ be the universal cover, and let $U$ be the $(2n-3)$-th term of the Postnikov tower of $X$ constructed before. Since $\pi_{1}(W)\simeq\pi_{1}(X)$, $\chi:\tilde{W}:=f^{\ast}\tilde{X}\rightarrow W$ gives a universal cover of $W$. Moreover, since $\pi_{i}(W)\simeq\pi_{i}(X)$ for $1\leqslant i\leqslant 2n-3$, by the uniqueness (up to the homotopy) of the Postnikov tower (see \cite{Hat02}), $c_{W,n-1}:W\rightarrow U$ is also the $(2n-3)$-th term of the Postnikov tower of $W$, and we have the homotopy equivalence $c_{X,n-1}\circ f\simeq c_{W,n-1}$. 

Now for an arbitrary $[\omega]\in V^{2n-2}_{\textrm{hyp}}(W)$, by Proposition \ref{p42} there exists a class $[\alpha]\in V^{2n-2}_{\textrm{hyp}}(U)$ such that $c^{\ast}_{W,n-1}[\alpha]=[\omega]$. Let $[\beta]=c^{\ast}_{X,n-1}[\alpha]\in V^{2n-2}_{\textrm{hyp}}(X)$. Clearly $f^{\ast}[\beta]=[\omega]$. Namely, $V^{2n-2}_{\textrm{hyp}}(W)\subseteq f^{\ast}(V^{2n-2}_{\textrm{hyp}}(X))$. Remember the hyperbolicity is preserved under the pull-back via a continuous map. The opposite inclusion is trivial.
\end{proof}
\end{corollary}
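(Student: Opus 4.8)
The statement to prove is Corollary~\ref{c44}: for $f:W\to X$ a continuous map between path-connected simplicial complexes with $f_*:\pi_i(W)\to\pi_i(X)$ an isomorphism for $1\leqslant i\leqslant 2n-3$, one has $f^*(V^{2n-2}_{\mathrm{hyp}}(X))=V^{2n-2}_{\mathrm{hyp}}(W)$.

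The plan is to run the same Postnikov-tower argument that was used for $BG$ in Corollary~\ref{c32}, but now at the level of the truncated space $U$. First I would introduce the universal cover $\pi:\tilde X\to X$ and the $(2n-3)$-th Postnikov term $U$ of $X$ constructed just before Lemma~\ref{l42}. Since $f$ induces an isomorphism on $\pi_1$, the pull-back $\tilde W:=f^*\tilde X$ is a universal cover of $W$. The key point is that $f$ also induces isomorphisms on $\pi_i$ for $2\leqslant i\leqslant 2n-3$, so by the uniqueness up to homotopy of Postnikov sections (cite \cite{Hat02}), the composite $c_{X,n-1}\circ f:W\to U$ serves as the $(2n-3)$-th Postnikov term of $W$ as well; in particular $c_{X,n-1}\circ f\simeq c_{W,n-1}$.

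With this identification in hand, the two inclusions follow quickly. For the nontrivial inclusion $V^{2n-2}_{\mathrm{hyp}}(W)\subseteq f^*(V^{2n-2}_{\mathrm{hyp}}(X))$: given $[\omega]\in V^{2n-2}_{\mathrm{hyp}}(W)$, apply Proposition~\ref{p42} to $W$ to get $[\alpha]\in V^{2n-2}_{\mathrm{hyp}}(U)$ with $c^*_{W,n-1}[\alpha]=[\omega]$; then set $[\beta]:=c^*_{X,n-1}[\alpha]$, which lies in $V^{2n-2}_{\mathrm{hyp}}(X)$ again by Proposition~\ref{p42} (or directly by the definition of $V^{2n-2}_{\mathrm{hyp}}(U)$), and compute $f^*[\beta]=f^*c^*_{X,n-1}[\alpha]=(c_{X,n-1}\circ f)^*[\alpha]=c^*_{W,n-1}[\alpha]=[\omega]$ using the homotopy equivalence. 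The reverse inclusion $f^*(V^{2n-2}_{\mathrm{hyp}}(X))\subseteq V^{2n-2}_{\mathrm{hyp}}(W)$ is immediate, since hyperbolicity of a class is preserved under pull-back by any continuous map (every continuous map being homotopic to a simplicial one, and pull-back via a simplicial map of a $\tilde d$-bounded form being $\tilde d$-bounded).

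The main obstacle — really the only place where care is needed — is justifying that $c_{X,n-1}\circ f$ is genuinely a Postnikov section of $W$, i.e. that the hypothesis on $\pi_i$ for $i$ up to $2n-3$ is exactly what is needed and that the uniqueness statement from \cite{Hat02} applies in the simplicial (not necessarily finite) setting. One should check that $U$ has the right homotopy groups ($\pi_i(U)=\pi_i(X)$ for $i\leqslant 2n-3$ and $\pi_i(U)=0$ for $i\geqslant 2n-2$), that $W$ then has matching homotopy groups in the relevant range, and that a map realizing a Postnikov section is unique up to homotopy; these are standard but should be stated cleanly. Once that identification is secured, the rest is a formal diagram chase exactly parallel to Corollary~\ref{c32}.
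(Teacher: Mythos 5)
Your proposal is correct and follows essentially the same route as the paper's proof: identify $c_{X,n-1}\circ f$ with the $(2n-3)$-th Postnikov section of $W$ via the uniqueness of Postnikov towers, lift a hyperbolic class on $W$ to $U$ by Proposition~\ref{p42}, push it down to $X$ via $c_{X,n-1}^{\ast}$, and note the trivial reverse inclusion from functoriality of hyperbolicity under pull-back. No substantive differences.
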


Replacing Lemma \ref{l42} and Proposition \ref{p41} by Corollaries \ref{c42} and \ref{c43}, we obtain
\begin{corollary}\label{c45}
\begin{enumerate}
    \item[(1)] $c_{X,k}^{\ast}(V^{2k}_{\textrm{hyp}}(U_{k}))=V^{2k}_{\textrm{hyp}}(X)$.
    \item[(2)] Assume that $f:W\rightarrow X$ is a continuous map between two path-connected simplicial complexes, such that $f_{\ast}:\pi_{i}(W)\rightarrow\pi_{i}(X)$ is isomorphic for $1\leqslant i\leqslant 2k-1$. Then
    \[
      f^{\ast}(V^{2k}_{\textrm{hyp}}(X))=V^{2k}_{\textrm{hyp}}(W).
    \]
\end{enumerate}

\end{corollary}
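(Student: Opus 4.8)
The plan is to obtain Corollary~\ref{c45} by transcribing the proofs of Proposition~\ref{p42} and Corollary~\ref{c44}, with the pair $(U,2n-2)$ systematically replaced by $(U_{k},2k)$ and the structural inputs upgraded to their $k$-versions, namely Corollaries~\ref{c42} and~\ref{c43}.

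For part~(1), I would first observe that $c_{X,k}^{\ast}(V^{2k}_{\textrm{hyp}}(U_{k}))\subseteq V^{2k}_{\textrm{hyp}}(X)$ is automatic, since hyperbolicity is preserved under pull-back by a continuous map. For the reverse containment, take $[\omega]\in V^{2k}_{\textrm{hyp}}(X)$; since $2k\geqslant 2$ this class is aspherical, so Corollary~\ref{c42} supplies $[\alpha]\in H^{2k}(U_{k},\mathbb{R})$ with $c_{X,k}^{\ast}[\alpha]=[\omega]$. I would then feed $[\alpha]$ into Corollary~\ref{c43}: for every continuous map $g\colon Z\to U_{k}$ from a finite simplicial complex $Z$, the hypothesis $c_{X,k}^{\ast}[\alpha]\in V^{2k}_{\textrm{hyp}}(X)$ forces $g^{\ast}[\alpha]$ to be hyperbolic, which is precisely the assertion $[\alpha]\in V^{2k}_{\textrm{hyp}}(U_{k})$.

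For part~(2), I would set up the Postnikov machinery exactly as in Corollary~\ref{c44}. Let $\pi\colon\tilde{X}\to X$ be the universal cover and $U_{k}$ the $(2k-1)$-th term of the Postnikov tower of $X$. Since $f_{\ast}$ is an isomorphism on $\pi_{1}$, the pull-back $\tilde{W}:=f^{\ast}\tilde{X}\to W$ is a universal cover of $W$; and since $f_{\ast}$ is an isomorphism on $\pi_{i}$ for $1\leqslant i\leqslant 2k-1$, the uniqueness up to homotopy of Postnikov towers (see \cite{Hat02}) identifies $c_{W,k}\colon W\to U_{k}$ with the $(2k-1)$-th Postnikov term of $W$ and yields $c_{X,k}\circ f\simeq c_{W,k}$. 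Given $[\omega]\in V^{2k}_{\textrm{hyp}}(W)$, part~(1) applied to $W$ produces $[\alpha]\in V^{2k}_{\textrm{hyp}}(U_{k})$ with $c_{W,k}^{\ast}[\alpha]=[\omega]$, and then $[\beta]:=c_{X,k}^{\ast}[\alpha]\in V^{2k}_{\textrm{hyp}}(X)$ satisfies $f^{\ast}[\beta]=[\omega]$; the opposite inclusion is again trivial.

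I expect the only point genuinely needing verification to be that the dimension bookkeeping in the proofs of Lemma~\ref{l42} and Proposition~\ref{p41} survives the replacement of $2n-2$ by $2k$: that $H^{2k}(U_{k},X;\mathbb{R})=0$ because $X$ and $U_{k}$ share a $2k$-skeleton; that the relevant de Rham groups of the cells of dimension $\geqslant 2k+1$ building $U_{k}$ and of their bounding spheres vanish, so the bounded primitives $\alpha'$ and $\xi_{0}$ appearing in the proof of Proposition~\ref{p41} still exist; and that the Postnikov comparison only requires agreement of homotopy groups through degree $2k-1$. All of these hold verbatim, so the argument is a genuine transcription — which is why Corollaries~\ref{c42},~\ref{c43} and~\ref{c45} are recorded as corollaries rather than reproved in full.
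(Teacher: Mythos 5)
Your proposal is correct and follows exactly the route the paper intends: the paper's own ``proof'' of Corollary~\ref{c45} is literally the instruction to rerun the arguments of Proposition~\ref{p42} and Corollary~\ref{c44} with Lemma~\ref{l42} and Proposition~\ref{p41} replaced by their $k$-analogues, Corollaries~\ref{c42} and~\ref{c43}, which is what you do. The verification points you single out (the shared $2k$-skeleton giving $H^{2k}(U_{k},X;\mathbb{R})=0$, the vanishing de Rham cohomology of the attached cells and their bounding spheres in the relevant degrees, and the Postnikov comparison requiring only $\pi_{i}$-isomorphisms through degree $2k-1$) are precisely the places where the degree bookkeeping must be checked, and they all hold.
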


\subsection{On the contraction to points}
We are in a good position to prove Theorems \ref{t14} and \ref{t15}. In particular, the vanishing requirement for the higher homotopy groups is not necessary any more.  

\begin{proof}[Proof of Theorem \ref{t14}]
Assume that $(\hat{X},\hat{\Omega})$ is balanced hyperbolic. Let $G=\pi_1(X)$, and $\tilde{X}$ is the universal covering of $X$. Keep the notations before, we should consider the following commutative diagram:
\begin{equation*}
\xymatrix@C=2pc@R=2pc{
& U\\
\Gamma_{f}\ar[r]^{l_{2}}\ar[ur]^{c_{\Gamma_{f},n-1}\simeq c_{X,n-1}\circ l_{2}} & X\ar[u]_{c_{X,n-1}}
}
\end{equation*}
 
By Lemma \ref{l41}, for $1\leqslant i\leqslant 2n-3$, $(l_{2})_\ast:\pi_{i}(\Gamma_{f})\simeq\pi_{i}(\hat{X}\setminus E)\rightarrow\pi_{i}(X)$ and $(c_{X,n-1})_\ast:\pi_{i}(X)\rightarrow\pi_{i}(U)$ are isomorphisms, while for $i\geq 2n-2$, $\pi_i(U)=0$. Then by the uniqueness (up to the homotopy) of the Postnikov tower of $\Gamma_{f}$, $c_{\Gamma_{f},n-1}:\Gamma_{f}\rightarrow U$ is also the $(2n-3)$-th term of the Postnikov tower of $\Gamma_{f}$, and we have the homotopy equivalence $c_{X,n-1}\circ l_{2}\simeq c_{\Gamma_{f},n-1}$. Moreover, as $U$ can be regarded as a subcomplex of $BG$, the universal cover $Z:=f^{\ast}\tilde{X}$ of $\Gamma_{f}$ is actually the pull-back of $EG|_U$ by $c_{X,n-1}\circ l_{2}$.

As $[\hat{\Omega}]\in V^{2}_{\textrm{hyp}}(\hat{X})$, $[\Xi]:=(l_{1})^{\ast}[\hat{\Omega}]\in V^{2}_{\textrm{hyp}}(\Gamma_{f})$. By Proposition \ref{p42} there exists a unique class $[\alpha]\in V^{2}_{\textrm{hyp}}(U)$, so that $c_{\Gamma_{f},n-1}^{\ast}[\alpha]=[\Xi]$. Denote $[\beta]=c_{X,n-1}^{\ast}[\alpha]\in V^{2n-2}_{\textrm{hyp}}(X)$, thus $[\Xi]=(l_{2})^\ast[\beta]$. Moreover, as $l_2$ is a birational morphism, 
\[
(l_{2})_\ast[\Xi]=(l_{2})_{\ast}(l_{2})^{\ast}[\beta]=[\beta].
\] 
Remember that 
\[
[\beta]=(l_{2})_{\ast}(l_{1})^{\ast}[\hat{\Omega}]=(p_{2})_{\ast}(p_{1})^{\ast}[\hat{\Omega}]
\] 
is balanced as is shown before. It is exactly a balanced hyperbolic class on $X$.
\end{proof}

Since on a complex surface, a balanced metric must be a K\"{a}hler metric, Theorem \ref{t14} implies that, for a contraction to the points $f:\hat{X}\dashrightarrow X$ between compact complex surfaces, if $\hat{X}$ is K\"{a}hler hyperbolic, $X$ is also K\"ahler hyperbolic. 

\begin{proof}[Proof of Theorem \ref{t15}]
$(l_{2})^{\ast}(V^{2k}_{\textrm{hyp}}(X))=V^{2k}_{\textrm{hyp}}(\Gamma_{f})$ is due to Corollary \ref{c45}.

Now for an arbitrary $[\omega]\in V^{2k}_{\textrm{hyp}}(X)$, $(l_{2})^{\ast}[\omega]\in V^{2k}_{\textrm{hyp}}(\Gamma_{f})$ and $(l_{2})_{\ast}(l_{2})^{\ast}[\omega]=[\omega]$. Thus 
\[
V^{2k}_{\textrm{hyp}}(X)\subseteq (l_{2})_{\ast}(V^{2k}_{\textrm{hyp}}(\Gamma_{f})).
\]

On the other hand, if $[\Xi]\in V^{2k}_{\textrm{hyp}}(\Gamma_{f})$, by Corollary \ref{c45} there exists an $[\alpha]\in V^{2k}_{\textrm{hyp}}(U_{k})$, so that $c_{\Gamma_{f},k}^{\ast}[\alpha]=[\Xi]$. Denote $[\beta]=c_{X,k}^{\ast}[\alpha]\in V^{2k}_{\textrm{hyp}}(X)$, thus $[\Xi]=(l_{2})^\ast[\beta]$. Moreover, as $l_{2}$ is a birational morphism, 
\[
(l_{2})_\ast[\Xi]=(l_{2})_\ast(l_{2})^\ast[\beta]=[\beta].
\] 
In summary, we obtain that $(l_{2})_{\ast}(V^{2k}_{\textrm{hyp}}(\Gamma_{f}))=V^{2k}_{\textrm{hyp}}(X)$. 

In the end, assume that $(\hat{X},\hat{\omega})$ is weakly K\"{a}hler $k$-hyperbolic. Then
\[ 
[\gamma]:=(l_{2})_{\ast}(l_{1})^{\ast}[\hat{\omega}^{k}]\in V^{2k}_{\textrm{hyp}}(X).
\]  
By hypothesis $k\mid(n-1)$, say $kt=n-1$. We are left to prove the nefness and bigness of $[\Omega:=\gamma]$. 

Suppose $[\Omega]$ is not nef. By (\ref{e21}) there exists a non-zero pseudo-effective $(1,1)$-class $[P]$ on $X$, such that 
\[
0>[\Omega]\cdot[P]=(p_{2})_{\ast}(p_{1})^{\ast}[\hat{\omega}^{n-1}]\cdot [P]=(p_{1})^{\ast}[\hat{\omega}^{n-1}]\cdot(p_{2})^{\ast}[P].
\] 
It contradicts to the facts that $(p_{1})^{\ast}[\hat{\omega}^{n-1}]$ is nef and $(p_{2})^{\ast}[P]$ is pseudo-effective. Therefore $[\Omega]$ is nef.

Suppose $[\Omega]$ is not big. By (\ref{e31}) there exists a non-zero nef $(1,1)$-class $[\delta]$ on $X$, such that
\[
0>[\Omega]\cdot[\delta]=(p_{2})_{\ast}(p_{1})^{\ast}[\hat{\omega}^{n-1}]\cdot[\delta]=(p_{1})^{\ast}[\hat{\omega}^{n-1}]\cdot(p_{2})^{\ast}[\delta].
\]
It contradicts to the facts that $(p_{1})^{\ast}[\hat{\omega}^{n-1}]$ is big and $(p_{2})^{\ast}[\delta]$ is nef. Therefore $[\Omega]$ is big.
\end{proof}

\begin{acknowledgements}
The authors are grateful to Prof. Jian Chen for reading an earlier version and his helpful remarks.
\end{acknowledgements}

\end{document}